\newcommand{\romannum}{\renewcommand{\labelenumi}{\textnormal{(\roman{enumi})}}}
\newcommand{\cfct}{\mathbf{c}}
\newcommand{\C}{\mathbb{C}}
\newcommand{\N}{\mathbb{N}}
\newcommand{\R}{\mathbb{R}}
\newcommand{\Z}{\mathbb{Z}}
\newtheorem{thm}{Theorem}[section]
\newtheorem{lem}[thm]{Lemma}
\newtheorem{prop}[thm]{Proposition}
\theoremstyle{definition}
\newtheorem{defin}[thm]{Definition}
\newtheorem{rem}[thm]{Remark}
\newtheorem{exa}[thm]{Example}
\title[A disc multiplier in Jacobi Analysis]{Almost Everywhere Convergence of the inverse Jacobi Transform and Endpoint Results for a Disc Multiplier}
\author{Troels Roussau Johansen}
\address{Mathematisches Seminar,\\
   Christian-Albrechts Universit\"at zu Kiel\\
   Ludewig-Meyn-Strasse 4, D-24098 Kiel\\
   Germany}
\email{johansen@math.uni-kiel.de}
\date{}
\subjclass[2010]{43A50 (primary), 33C05, 34E05 (secondary)}
\keywords{Inverse Jacobi transform, a.e. convergence, disc multiplier, Lorentz space estimates, expontial volume growth}
\newcommand{\dfct}{\mathbf{d}}
\begin{document}
\begin{abstract}
The maximal operator $S_*$ for the spherical summation operator (or \emph{disc multiplier}) $S_R$ associated with the Jacobi transform through the defining relation $\widehat{S_Rf}(\lambda)=1_{\{\vert\lambda\vert\leq R\}}\widehat{f}(t)$ for a function $f$ on $\R$ is shown to be bounded from $L^p(\R_+,d\mu)$ into $L^p(\R,d\mu)+L^2(\R,d\mu)$ for $\frac{4\alpha+4}{2\alpha+3}<p\leq 2$. Moreover $S_*$ is bounded from $L^{p_0,1}(\R_+,d\mu)$ into $L^{p_0,\infty}(\R,d\mu)+L^2(\R,d\mu)$. In particular $\{S_Rf(t)\}_{R>0}$ converges almost everywhere towards $f$, for $f\in L^p(\R_+,d\mu)$, whenever $\frac{4\alpha+4}{2\alpha+3}<p\leq 2$.
\end{abstract}

\subjclass[2010]{43A50 (primary), 33C05, 34E05 (secondary)}
\keywords{Inverse Jacobi transform, a.e. convergence, disc multiplier, Lorentz space estimates, expontial volume growth}

\maketitle

\section{Introduction}
\label{intro}
The importance of the disc multiplier in Euclidean harmonic analysis -- defined as the operator $S_R$ satisfying the relation $\widehat{S_Rf}(\xi)=1_{\|\xi\|\leq R}\widehat{f}(\xi)$ -- was firmly established by Fefferman's groundbraking result in \cite{Fefferman-ballmultiplier} that $S_R$ is not bounded on $L^2(\R^n)$, $n\geq 2$, unless $p=2$. The operator has since then played a role in other areas of mathematics. It usually appears whenever one studies convergence properties of eigenfunction expansions for differential operators on manifolds, and it also appears as an extreme endpoint case of Bochner--Riesz means. An interesting aspect, however, is that the operator behaves much better when restricted to radial $L^p$-functions. Indeed, according to \cite{Herz}, the operator is bounded on $L^p_\text{rad}(\R^n)$ for $\frac{2n}{n+1}<p<\frac{2n}{n-1}$. This result has later been improved in several directions, and we shall recall them one by one in the main text.

A natural analogue of the disc multiplier in the framework of spherical analysis on Riemannian symmetric spaces of rank one was introduced by Meaney and Prestini in the mid-90's and the study was completed in the paper \cite{Meaney-Prestini_invtrans} with almost sharp statements about the mapping properties of the maximal operator associated with the disc multiplier. In the present paper we follow in their footsteps and generalize their results to Jacobi analysis, and we establish the missing endpoint results in the setting of Jacobi analysis. In particular we complement the paper \cite{Anker-Damek-Yacoub}. This implies  almost everywhere convergence of $\{S_Rf(x)\}_{R>0}$ for $f\in L^p(d\mu)$ for a certain range of $p$, most directly related to \cite{RomeraSoria} in the Euclidean case, whereas the extension to Hankel transforms was considered in \cite{Colzani-Hankel}.

There are other ways to obtain  almost everywhere convergence of $\{S_Rf(x)\}_{R>1}$. In \cite{Brandolini-Gigante}, the authors obtain equiconvergence results for $\{S_Rf\}$ in the slightly more general framework of (noncompact) {C}h\'ebli--{T}rim\`eche hypergroups. The results of the present paper should generalize to their setting without much effort. Our endpoint results are stronger, however, as we are able to determine the endpoint behavior of the maximal operator at the level of Lorentz spaces. Moreover, and this is a fundamental advantage of working with maximal operators, we will use the results of the present paper as part of a complex interpolation argument in a companion paper to obtain convergence results for Bochner--Riesz means in Jacobi analysis below the critical order of integrability. In order for this to work we need norm estimates in the first place.

Finally we wish to point out that a ``flat'' version of our results on the disc multipliers were recently obtained in \cite{ElKamel-Yacoub}. By ``flat'' we refer to the modern habit of regarding Dunkl analysis on $\R$ as a `zero curvature limit' of harmonic analysis in rank one root systems, in the sense of Cherednik, Heckman and Opdam. The proofs of \cite{ElKamel-Yacoub} are more or less straightforward adaptations of techniques from \cite{RomeraSoria} and \cite{Prestini-summation}, since the size of balls, measured in terms of the relevant measures in Dunkl theory, do not grow exponentially fast, contrary to what happens for the noncompact Riemannian symmetric spaces. It is well-understood that the ``curved'' situation -- be it analysis on symmetric spaces or slightly more generally in Jacobi analysis -- is complicated by balls having exponential volume growth.
\medskip

We employ the same techniques as in \cite{Meaney-Prestini_invtrans}, carried out in the more general setting of Jacobi analysis. Most proofs are therefore structurally identical to those in \cite{Meaney-Prestini_invtrans}, which we wish to acknowledge at this point. There are several technical difficulties, however, like the precise asymptotic expansion for the $\cfct$-function in Lemma \ref{lemma.precise-c}. Also of importance was that we were able to incorporate the paper \cite{Prestini-weighted} by Prestini. The careful analysis, in turn, allowed us to establish new endpoint results, thereby showing to exactly what extend one can generalize the spherical analysis on symmetric spaces of rank one. Since we never use the actual formula for the measures $d\mu(t)$, but rather just its behavior for $t\thicksim 1$ and $t\gg 1$, and since the key ingredients for the proofs -- asymptotic estimates for $\varphi_\lambda$ and the Plancherel density $\vert\cfct(\lambda)\vert^{-2}$ -- are also available for {C}h\'ebli--{T}rim\`eche hypergroups (see Theorem~1.2, Section~1.3, Theorems~2.1 and 2.2 in \cite{Brandolini-Gigante}) the exact same calculations can be carried out in the context of such hypergroups.

\section{Jacobi Analysis}
Let $(a)_0=1$ and $(a)_k=a(a+1)\cdots (a+k-1)$. The hypergeometric function ${_2}F_1(a,b;c,z)$ is defined by
\[{_2}F_1(a,b;c,z)=\sum_{k=0}^\infty\frac{(a)_k(b)_k}{(c)_kk!}z^k,\quad\vert z\vert<1;\]
the function $z\mapsto{_2}F_1(a,b;c,z)$ is the unique solution of the differential equation
\[z(1-z)u''(z)+(c-(a+b+1)z)u'(z)-abu(z)=0\]
which is regular in $0$ and equals $1$ there.  The Jacobi functions for parameters $(\alpha,\beta)$ are defined by $\varphi_\lambda^{(\alpha,\beta)}(t)={_2}F_1(\frac{1}{2}(\alpha+\beta+1-i\lambda),
\frac{1}{2}(\alpha+\beta+1+i\lambda); \alpha+1,-\sinh^2t)$. It is
thereby clear that $\lambda\mapsto\varphi_\lambda(t)$ is analytic for all
$t\geq 0$. Moreover, for $\Im\lambda\geq 0$, there exists a unique
solution $\phi_\lambda$ to the same equation satisfying
$\phi_\lambda(t)=e^{(i\lambda-\rho)t}(1+o(1))$ as $t\to\infty$, and
$\lambda\mapsto\phi_\lambda(t)$ is therefore also analytic for $t\geq 0$.

In what follows we assume that $\alpha\neq -1,-2,\ldots$, $\alpha>\beta>-\frac{1}{2}$, and $\vert\beta\vert<\alpha+1$. Let $\rho=\alpha+\beta+1$. The usual Lebesgue space on $\R^+$ shall simply be denoted $L^p$, whereas by $L^p(d\mu)$ we understand the weighted Lebesgue space, with $d\mu(t)=d\mu_{\alpha,\beta}(t)=\Delta(t)\,dt$, where $\Delta_{\alpha,\beta}(t)=(2\sinh t)^{2\alpha+1}(2\cosh t)^{2\beta+1}$, $t>0$. We adopt the notational convention of writing $\mu(A)$ for the weighted measure of a measurable subset $A$ of $\R$, that is, $\mu(A)=\|1_A\|_{L^1(d\mu)}$. It is of paramount importance to stress that the behavior of $\Delta(t)$ depends on the `size' of $t$. More precisely,

\[\begin{split}
\vert\Delta_{\alpha,\beta}(t)\vert \leq
\begin{cases} t^{2\alpha+1}&\text{for }
t\lesssim 1\\
e^{2\rho t}&\text{ for } t\gg 1\end{cases}.
\end{split}\]

In analogy with the case of symmetric spaces, one proceeds to show the existence of a function
$\cfct=\cfct_{\alpha,\beta}$ for which
$\varphi_\lambda(t)=\cfct(\lambda)e^{(i\lambda-\rho)t}\phi_\lambda(t)+\cfct(-\lambda)e^{(-i\lambda-\rho)t}\phi_{-\lambda}(t)$.
Since we adhere to the conventions and normalization used in
\cite{Koornwinder-FJ}, the $\cfct$-function is given by
\[\cfct(\lambda)=\frac{2^\rho\Gamma(i\lambda)\Gamma(\frac{1}{2}(1+i\lambda))}
{\Gamma(\frac{1}{2}(\rho+i\lambda))\Gamma(\frac{1}{2}(\rho+i\lambda)-\beta)}.\]
Observe that for $\alpha,\beta\neq -1,-2,\ldots$, $\cfct(-\lambda)^{-1}$ has
finitely many poles for $\Im\lambda <0$ and none if $\Im\lambda\geq 0$ and $\Re\rho>0$. It follows from Stirling's formula that
for every $r>0$ there exists a positive constant $c_r$ such  that
\begin{equation}\label{eqn.c-fct.est}
\vert\cfct(-\lambda)\vert^{-1}\leq c_r(1+\vert\lambda\vert)^{\alpha+\frac{1}{2}}\text{ if } \Im\lambda\geq 0\text{ and }
\cfct(-\lambda')\neq 0\text{ for }\vert\lambda'-\lambda\vert\leq r.
\end{equation}
The following statement on the precise asymptotic expansion of the density $\vert\cfct(\lambda)\vert^{-2}$ will play an important role later in the paper. We have included a detailed proof since the result cannot be lifted directly from \cite{Stanton-Tomas} or \cite{Meaney-Prestini_invtrans}; $\alpha$ and $\beta$ need not correspond to integer-valued root multiplicities, so the expression for $\cfct(\lambda)$ does not really simplify, unlike for rank one symmetric spaces.

\begin{lem}\label{lemma.precise-c} Assume $\alpha>\beta>-\frac{1}{2}$.
 \begin{enumerate}
 \romannum
 \item For every integer $M$ there exist constants $c_i, i=0,\ldots,M-1$ (depending on $\alpha$, $\beta$, and $M$) such that
     \[\vert\cfct(\lambda)\vert^{-2}\thicksim c_0\vert\lambda\vert^{2\alpha+1}\Biggl\{1+\sum_{j=1}^{M-1}c_j\lambda^{-j}+O\bigl(\lambda^{-M}\bigl)\Biggr\}\text{ as } \vert\lambda\vert\to\infty.\]
\item Let $\dfct(\lambda)=\vert\cfct(\lambda)\vert^{-2}$,
$\lambda\geq 0$, and $k\in\N_0$. There exists a constant
$c_k=c_{k,\alpha,\beta}$ such that
\[\Bigl|\frac{d^k}{d\lambda^k}\dfct(\lambda)\Bigr|\leq
c_k(1+\vert\lambda\vert)^{2\alpha+1-k}.\]
\item $\cfct'(\lambda)\thicksim \cfct(\lambda)O(\lambda^{-1})$ and $\cfct''(\lambda)\thicksim\cfct(\lambda)O(\lambda^{-2})$.
\end{enumerate}
\end{lem}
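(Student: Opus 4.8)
The plan is to reduce everything to Stirling's asymptotic expansion for $\log\Gamma$. Since $\alpha,\beta,\rho$ are real, for real $\lambda$ one has $\overline{\cfct(\lambda)}=\cfct(-\lambda)$, whence
\[\dfct(\lambda)=\vert\cfct(\lambda)\vert^{-2}=\cfct(\lambda)^{-1}\,\cfct(-\lambda)^{-1};\]
multiplying out the defining expression for $\cfct$ exhibits $\dfct(\lambda)$ as $2^{-2\rho}$ times a quotient of products of $\Gamma$-values, whose arguments occur in conjugate pairs $w,\bar w$ with $\vert\Im w\vert\thicksim\vert\lambda\vert$. First I would remove the factors $\Gamma(i\lambda)\Gamma(-i\lambda)$ by the reflection formula $\Gamma(z)\Gamma(1-z)=\pi/\sin\pi z$, which gives the closed form $\Gamma(i\lambda)\Gamma(-i\lambda)=\pi/(\lambda\sinh\pi\lambda)$, making the exponentially large factor $\lambda\sinh\pi\lambda$ explicit. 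For the remaining factors $\vert\Gamma(x+iy)\vert^{2}=\Gamma(x+iy)\Gamma(x-iy)$ I would insert Stirling's series
\[\log\Gamma(x+iy)\thicksim\bigl(x+iy-\tfrac12\bigr)\log(x+iy)-(x+iy)+\tfrac12\log2\pi+\sum_{k\geq1}\frac{B_{2k}}{2k(2k-1)}(x+iy)^{1-2k}\]
and take twice the real part, obtaining for each fixed $x$ a uniform expansion $\vert\Gamma(x+iy)\vert^{2}\thicksim 2\pi\vert y\vert^{2x-1}e^{-\pi\vert y\vert}\bigl(1+\sum_{k\geq1}q_k(x)\vert y\vert^{-k}\bigr)$.

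The decisive step is then the bookkeeping. Multiplying the contributions, the exponential decay $e^{-\pi\vert\lambda\vert}$ from the $\vert\Gamma\vert^{2}$ factors cancels exactly against the exponential growth $\lambda\sinh\pi\lambda\thicksim\tfrac12\lambda\,e^{\pi\lambda}$ introduced by the reflection identity; the surviving powers of $\vert\lambda\vert$ add up to $2\alpha$ (from the two numerator moduli, whose exponents $2x-1$ are $\rho-1=\alpha+\beta$ and $\alpha-\beta$) plus one (from the factor $\lambda$ in $\pi/(\lambda\sinh\pi\lambda)$), giving the leading term $c_0\vert\lambda\vert^{2\alpha+1}$, while the residual factors $1+\sum_k(\cdot)\vert\lambda\vert^{-k}$ multiply into a single asymptotic series. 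Truncating at order $M$ yields (i), with error $O(\lambda^{-M})$. This is where the point stressed in the paper bites: because $\alpha,\beta$ need not be half-integers, none of the numerator $\Gamma$-values collapses to an elementary function, so one must genuinely carry the Stirling coefficients $q_k(x)$ (generalized Bernoulli data) symbolically through the multiplication — the main, though essentially mechanical, obstacle.

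For (ii) I would differentiate the expansion of (i). The meromorphic continuation $\lambda\mapsto\cfct(\lambda)^{-1}\cfct(-\lambda)^{-1}$ of $\dfct$ is holomorphic in a neighbourhood of $(0,\infty)$, so Cauchy's estimates upgrade the asymptotic expansion of $\dfct$ to asymptotic expansions of all its derivatives, each differentiation lowering the exponent by one; equivalently one differentiates the underlying digamma expansions. On $0\leq\lambda\leq\lambda_0$ the density is smooth with bounded derivatives and $(1+\vert\lambda\vert)^{2\alpha+1-k}$ is bounded above and below, so the two regimes combine into the uniform bound $\vert\dfct^{(k)}(\lambda)\vert\leq c_k(1+\vert\lambda\vert)^{2\alpha+1-k}$.

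For (iii) I would pass to the logarithmic derivative. Writing $\log\cfct(\lambda)=\text{const}+\sum_j\epsilon_j\log\Gamma(a_j+b_j i\lambda)$, with $\epsilon_j=\pm1$ according as the $j$-th factor lies in the numerator or denominator, gives $\cfct'(\lambda)/\cfct(\lambda)=\sum_j\epsilon_j b_j i\,\psi(a_j+b_j i\lambda)$, and $\psi(z)=\log z+O(z^{-1})$ reduces the leading part to $i\log\lambda\sum_j\epsilon_j b_j+O(\lambda^{-1})$. The algebraic identity $\sum_j\epsilon_j b_j=0$ — the very balance that forced the growth exponent to be exactly $2\alpha+1$ in (i) — makes the logarithm disappear, so $\cfct'/\cfct=O(\lambda^{-1})$. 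For the second statement I would use $\cfct''/\cfct=(\cfct'/\cfct)'+(\cfct'/\cfct)^2$: the square is $O(\lambda^{-2})$ by the first part, while $(\cfct'/\cfct)'=-\sum_j\epsilon_j b_j^{2}\psi'(a_j+b_j i\lambda)$ has, by $\psi'(z)=z^{-1}+O(z^{-2})$, leading term $-\tfrac{1}{i\lambda}\sum_j\epsilon_j b_j=0$ and is hence also $O(\lambda^{-2})$. Thus $\cfct''/\cfct=O(\lambda^{-2})$, completing the proof.
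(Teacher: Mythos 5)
Your plan for (i) --- the reflection formula plus the conjugate-pair Stirling asymptotics $\vert\Gamma(x+iy)\vert^{2}\thicksim 2\pi\vert y\vert^{2x-1}e^{-\pi\vert y\vert}$ --- is a genuinely different, more hands-on route than the paper's, which runs the Paris--Kaminski scheme on an auxiliary quotient $Q(\lambda)$ of six Gamma factors normalized through the duplication formula and extracts the exponent $2\alpha+1$ from the invariants $\kappa=0$, $\theta=2\alpha$; your (ii)--(iii) (logarithmic derivative, digamma expansions, the cancellation $\sum_j\epsilon_jb_j=0$, with Cauchy estimates in place of the paper's induction) is essentially the paper's own argument. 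However, there is a concrete gap in your bookkeeping for (i).

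You claim to multiply out the paper's defining expression for $\cfct$, whose numerator is $2^\rho\Gamma(i\lambda)\Gamma(\tfrac12(1+i\lambda))$. Hence the denominator of $\dfct(\lambda)=\cfct(\lambda)^{-1}\cfct(-\lambda)^{-1}$ contains, besides $\Gamma(i\lambda)\Gamma(-i\lambda)=\pi/(\lambda\sinh\pi\lambda)$, the conjugate pair $\Gamma(\tfrac12(1+i\lambda))\Gamma(\tfrac12(1-i\lambda))=\pi/\cosh(\pi\lambda/2)$, which never appears in your ledger: you weigh only the two numerator pairs (total decay $e^{-\pi\lambda}$, powers $\alpha+\beta$ and $\alpha-\beta$) against the reflection factor ($\thicksim\tfrac12\lambda e^{\pi\lambda}$). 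Including the omitted pair destroys the claimed exact cancellation and yields $\dfct(\lambda)\thicksim c\,\lambda^{2\alpha+1}e^{\pi\lambda/2}$, contradicting the lemma; likewise it makes your key sum in (iii) equal to $1+\tfrac12-\tfrac12-\tfrac12=\tfrac12\neq 0$, so your argument would give $\cfct'/\cfct\thicksim\tfrac{i}{2}\log\lambda$ rather than $O(\lambda^{-1})$. The resolution is that the paper's printed formula for $\cfct$ is itself off: by the duplication formula, the parameters \eqref{eqn.parameters} in the paper's own proof correspond to the standard Flensted-Jensen--Koornwinder normalization $\cfct(\lambda)=2^{\rho-i\lambda}\Gamma(\alpha+1)\Gamma(i\lambda)\bigl[\Gamma(\tfrac12(\rho+i\lambda))\Gamma(\tfrac12(\rho+i\lambda)-\beta)\bigr]^{-1}$ (equivalently, $\Gamma(i\lambda/2)$ in place of $\Gamma(i\lambda)$ in the printed numerator), and for that function your ledger --- cancellation, exponent $2\alpha+1$, and $\sum_j\epsilon_jb_j=0$ --- is exactly right. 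So your computation does prove the lemma for the true $\cfct$-function, but as written it is inconsistent with the formula it starts from; a complete proof must either flag and correct the printed formula (as the paper implicitly does via the duplication step defining $Q$) or carry the extra Gamma pair and then be unable to conclude.
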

This improves on the usual asymptotic statement that $\vert\cfct(\lambda)\vert^{-2}\thicksim \vert\lambda\vert^{2\alpha+1}$ as $\vert\lambda\vert\to\infty$ and we will need this improvement at a later stage. This was already observed in \cite{Meaney-Prestini_invtrans}.

\begin{proof}
\noindent (i)\indent Following the technique in \cite[Subsection~2.2.1]{Paris-Kaminski} we introduce the auxiliary function
\[Q(\lambda)=\Biggl(\prod_{r=1}^q\Gamma(1-b_r+\beta_r\lambda)\Biggr)\Biggl(\prod_{r=1}^p\Gamma(1-a_r+\alpha_r\lambda)\Biggr)^{-1},\]
where we of course have in mind the particular parameters
\begin{equation}\label{eqn.parameters}
\left\{\begin{aligned}
p&=2, q=4, &b_1&=b_2=1-\frac{\rho}{2}, &b_3=b_4&=1+\beta-\frac{\rho}{2}\\
a_1&=a_2=1, &\beta_1&=\beta_2=\beta_3=\beta_4=\frac{i}{2},  &\text{and } &\alpha_1=\alpha_2=i
\end{aligned}\right.
\end{equation}
so that $\vert Q(\lambda)\vert=\vert\cfct(\lambda)\vert^{-2}$ by the duplication formula for the $\Gamma$-function. Recall that by Stirling's formula,
\[\log\Gamma(z)=(z-\tfrac{1}{2})\log z-z+\tfrac{1}{2}\log(2\pi)+\Omega(z), \text{ where } \Omega(z)\thicksim\sum_{r=1}^\infty\frac{B_{2r}}{2r(2r-1)z^{2r-1}}\]
for suitable numbers $B_{2n}$ (the Bernoulli numbers). Moreover,
\[\Omega(z)=\sum_{r=1}^{n-1}\frac{B_{2r}}{2r(2r-1)z^{2r-1}}+R_n(z)\]
for every positive integer $n$, where -- upon writing $z=xe^{i\theta}$ -- the remainder term $R_n(z)$ may be estimated according to
\begin{equation}\label{eqn.remainder-estimate}
\vert R_n(z)\vert\leq \frac{\vert B_{2n}\vert}{2n(2n-1)}\frac{(\sec\tfrac{\theta}{2})^{2n}}{\vert z\vert^{2n-1}}\text{ for }\vert\arg z\vert<\pi,
\end{equation}
see \cite[Equation~2.1.6]{Paris-Kaminski}. Presently $z$ will be of the form $z=\alpha_r\lambda+1-a_r$ with $\alpha_r\geq 0, a_r\in\C$, and $\lambda\in\R_+$, so that $\arg z$ remains constant as $\lambda\to\infty$.

For fixed $M\in\N$ and large $\vert\lambda\vert$ it thus holds that
\[\begin{split}
\log Q(\lambda)&=\sum_{r=1}^q\log\Gamma(1-b_r+\beta_r\lambda)-\sum_{r=1}^p\log\Gamma(1-a_r+\alpha_r\lambda)\\
&=\sum_{r=1}^q\Bigl\{\bigl(\tfrac{1}{2}-b_r+\beta_r\lambda\bigr)\log(1-b_r+\beta_r\lambda)-(1-b_r+\beta_r\lambda) \\
&\qquad +\tfrac{1}{2}\log(2\pi)+\Omega(1-b_r+\beta_r\lambda)\Bigr\}\\
&\quad - \sum_{r=1}^p \Bigl\{\bigl(\tfrac{1}{2}-a_r+\alpha_r\lambda\bigr)\log(1-a_r+\alpha_r\lambda)-(1-a_r+\alpha_r\lambda)\\ &\qquad+\tfrac{1}{2}\log(2\pi)+\Omega(1-a_r+\alpha_r\lambda)\Bigr\}\\
&\lessapprox \sum_{r=1}^q\bigl(\tfrac{1}{2}-b_r+\beta_r\lambda\bigr)\log(\beta_r\lambda)-\sum_{r=1}^p\bigl(\tfrac{1}{2}-a_r+\alpha_r\lambda\bigr) \log(\alpha_r\lambda)\\
&\qquad +\frac{1}{2}(q-p)(\log(2\pi)-2)-s\kappa-\theta\\
&\qquad + \sum_{r=1}^q\Omega(1-b_r+\beta_r\lambda)-\sum_{r=1}^p\Omega(1-a_r+\alpha_r\lambda)\\
&\thicksim
\sum_{r=1}^q\bigl(\tfrac{1}{2}-b_r+\beta_r\lambda\bigr)(\log \beta_r+\log\lambda)-\sum_{r=1}^p\bigl(\tfrac{1}{2}-a_r+\alpha_r\lambda\bigr) (\log \alpha_r+\log\lambda)\\
&\quad+\frac{1}{2}(q-p)(\log(2\pi)-2)-s\kappa-\theta+\sum_{j=1}^{M-1}c_j\lambda^{-j}+O(\lambda^{-M})\\
&=\Bigl[\frac{q-p}{2}+\theta+\lambda\kappa\Bigr]\log\lambda - \lambda(\log h+\kappa)\\ &\quad+ \log \widetilde{c}_0-\theta+\frac{q-p}{2}(\log(2\pi)-2)
+\sum_{j=1}^{M-1}c_j\lambda^{-j}+O(\lambda^{-M})
\end{split}\]
where
\[h:=\Bigl(\prod_{r=1}^p\alpha_r^{\alpha_r}\Bigr)\Bigl(\prod_{r=1}^q\beta_r^{-\beta_r}\Bigr),\quad \widetilde{c}_0:=\Bigl(\prod_{r=1}^q\beta_r^{\frac{1}{2}-b_r}\Bigr)\Bigl(\prod_{r=1}^p\alpha_r^{a_r-\frac{1}{2}}\Bigr)\]
\[\theta:=\sum_{r=1}^pa_r-\sum_{r=1}^qb_r,\text{ and } \kappa:=\sum_{r=1}^q\beta_r-\sum_{r=1}^q\alpha_r.\]
\noindent With the parameters defined as in \eqref{eqn.parameters}, one sees that $\kappa=0$, $\theta=1+1-(1-\tfrac{\rho}{2})\times 2-(1-\tfrac{\rho}{2}+\beta)\times 2 = 2\alpha$, and $\frac{q-p}{2}=1$, whence
\[\begin{split}
Q(\lambda)&\thicksim c_0\lambda^{\frac{q-p}{2}+\theta+s\kappa}e^{-s\kappa}\Biggl\{1+\sum_{j=1}^{M-1}c_j\lambda^{-j}+O(\lambda^{-M})\Biggr\}\\
&=c_0\lambda^{2\alpha+1}\Biggl\{1+\sum_{j=1}^{M-1}c_j\lambda^{-j}+O(\lambda^{-M})\Biggr\}\text{ as } \vert\lambda\vert\to\infty.
\end{split}\]
\bigskip

\noindent (ii) and (iii)\indent Moreover,
$\dfct'(\lambda)=-2\dfct(\lambda)\frac{\cfct'(\lambda)}{\cfct(\lambda)}$, so it
suffices to show that $\frac{\cfct'(\lambda)}{\cfct(\lambda)}\simeq
O(\frac{1}{\lambda})$. This may be seen as in the proof of
\cite[Lemma~8]{Meaney-Prestini} as follows: Since
\[\frac{\cfct'(\lambda)}{\cfct(\lambda)}=ic_{\alpha,\beta}\Bigl\{\psi(i\lambda)-\psi(\alpha-\beta+i\lambda)+\frac{1}{2}\psi(\frac{\alpha-\beta+i\lambda}{2})
-\frac{1}{2}\psi(\frac{\rho+i\lambda}{2})\Bigr\},\] where
$\psi(z):=\frac{\Gamma'(z)}{\Gamma(z)}=-\gamma-\frac{1}{z}+\sum_{n=1}^\infty\frac{z}{n(n+z)}$,
$\gamma$ being the Euler constant, it follows that
\[\frac{\cfct'(\lambda)}{\cfct(\lambda)}=c_{\alpha,\beta}\Bigl\{-\frac{1}{i\lambda}+\frac{1}{\rho+i\lambda}+\sum_{n=1}^\infty\frac{z_1-z_3}{(z_1+n)(z_3+n)}
+ \frac{1}{2}\sum_{n=1}^\infty\frac{z_2-z_4}{(z_2+n)(z_4+n)}\Bigr\}\] with
$z_1=i\lambda$, $z_2=\frac{1}{2}(\alpha-\beta+i\lambda)$,
$z_3=\alpha-\beta+i\lambda$, and $z_4=\frac{1}{2}(\rho+i\lambda)$. Observe that
$\left|-\frac{1}{i\lambda}+\frac{1}{\rho+i\lambda}\right|=\left|\frac{-\rho}{i\lambda(\rho+i\lambda)}\right|\leq\frac{1}{\vert\lambda\vert}\frac{\vert\rho\vert^2+\vert\rho\vert\vert\lambda\vert}{\vert\rho\vert^2
+\vert\lambda\vert^2}\leq \frac{c}{\vert\lambda\vert}$. The assertion for $k=1$
now follows from the estimate
\begin{multline*}
\frac{\vert\alpha-\beta\vert}{2}\sum_{n=1}^\infty\frac{1}{\vert z_1+n\vert\vert
z_3+n\vert}+\frac{\vert
\beta+\frac{1}{2}\vert}{2}\sum_{n=1}^\infty\frac{1}{\vert z_2+n\vert \vert
z_4+n\vert} \\ \leq c_{\alpha,\beta}\int_1^\infty
\frac{1}{x^2+\lambda^2}\,dx\leq
\frac{c_{\alpha,\beta}}{\vert\lambda\vert}.\end{multline*}
\medskip

The required estimates for $\dfct''(\lambda)$ is obtained analogously: first
observe that
$\dfct''(\lambda)=-2\dfct'(\lambda)\frac{\cfct'(\lambda)}{\cfct(\lambda)}-2\dfct(\lambda)\bigl((\frac{\cfct''(\lambda)}{\cfct(\lambda)}+\bigl(\frac{\cfct'(\lambda)}{\cfct(\lambda)}\bigr)^2
\bigr)$. In order to establish the assertion in the Proposition for the case
$k=2$ it suffices to prove that $\frac{\cfct''(\lambda)}{\cfct(\lambda)}\simeq
O(\frac{1}{\lambda^2})$. This can also be established as in the proof of
\cite[Lemma~8]{Meaney-Prestini}; indeed,
\begin{multline*}
\frac{\cfct''(\lambda)}{\cfct(\lambda)}=c_{\alpha,\beta}(\psi'(z_1)-\psi'(z_2)+\tfrac{1}{4}\psi'(z_3)-\tfrac{1}{4}\psi'(z_4))\\
+c_{\alpha,\beta}\frac{\cfct'(\lambda)}{\cfct(\lambda)}(\psi(z_1)-\psi(z_2)+\tfrac{1}{2}\psi(z_3)-\tfrac{1}{2}\psi(z_4))\end{multline*}where
$\psi'(z)=z^{-2}+\sum_{n=1}^\infty(z+n)^{-2}$, evaluated in the four points
$z_i$. Heuristically, it is now easy to prove that the left hand side is
$O(\lambda^{-2})$. The second half of the right hand side behaves as
$\frac{1}{\lambda}\frac{1}{\lambda}$ according to what we have already
established in the case of $k=1$, so it suffices to show that
\[\biggl|\sum_{n=1}^\infty\frac{1}{(z_i+n)^2}-\frac{1}{(z_{i+2}+n)^2}\biggr|\leq\frac{c}{\vert\lambda\vert^2}\text{
for } i=1,2.\] If, say, $i=1$, the required estimate follows like this:
\[\begin{split}
\biggl|\sum_{n=1}^\infty\frac{1}{(z_1+n)^2}-\frac{1}{(z_3+n)^2}\biggr|&\leq
\sum_{n=1}^\infty\biggl|\frac{(\alpha-\beta)(\alpha-\beta+2i\lambda+2n)}{(i\lambda+n)^2(\alpha-\beta+i\lambda+n)^2}\biggr|\\
&\leq c_{\alpha,\beta}\int_1^\infty\frac{1}{x^3+\lambda^3}\,dx\leq \frac{c_{\alpha,\beta}'}{\vert\lambda\vert^2}.
\end{split}\]

One proves by induction that
$\frac{\cfct^{(k)}(\lambda)}{\cfct(\lambda)}=O(\lambda^{-k})$ for
$k=0,1,\ldots$, and one would then formally have to carry out another proof by
induction that the estimate for $\dfct^{(k)}(\lambda)$ have the right order in
$\vert\lambda\vert$. We leave the  tedious details to the energetic reader.
\end{proof}
\begin{rem}
In principle one should be able to obtain the asymptotic expansion for $\vert\cfct(\lambda)\vert^{-2}$ from the expansion of $\vert\cfct(\lambda)\vert^2$ in \cite[Theorem~2.2]{Brandolini-Gigante} by long division of the asymptotic series. Such computations are indeed justified, cf. \cite[Section~1.5]{Erdelyi-asymptotic}. We have opted for a self-contained proof, however, that is inspired by \cite[Section~2.2]{Paris-Kaminski}. We found it worthwhile to use the explicit formula for the $\cfct$-function since we still have to have similar estimates for various derivatives of $\cfct(\lambda)$ and $\vert\cfct(\lambda)\vert^{\pm 2}$. The asymptotic expansion for $\vert\cfct(\lambda)\vert^{-2}$ will therefore be more explicit than what could obtained from \cite{Brandolini-Gigante}.
\end{rem}

\begin{exa}[Specialization to rank one symmetric spaces]
For special values of $\alpha$ and $\beta$, determined by the root system of a
rank one Riemannian symmetric space, the functions $\varphi_\lambda$ are the
usual spherical functions of Harish-Chandra, and the Jacobi transform is the spherical transform. To be more precise
assume $G/K$ is a rank one Riemannian symmetric space of noncompact type, with
positive roots $\alpha$ and $2\alpha$. Furthermore let $p$ denote the
multiplicity of $\alpha$ and $q$ the multiplicity of $2\alpha$ (we allow $q$ to
be zero). With $\alpha:=\frac{1}{2}(p+q-1)$ and $\beta:=\frac{1}{2}(q-1)$ both real, and $p=2(\alpha-\beta)$ and $q=2\beta+1$, the
function $\varphi^{(\alpha,\beta)}_\lambda$ is precisely the usual elementary
spherical function $\varphi_\lambda$ as considered by Harish-Chandra, $\rho=\alpha+\beta+1=\frac{1}{2}(p+2q)$ as it should be, and $\text{dim}(G/K)=p+q+1=2(\alpha+1)$. According to Lemma \ref{lemma.precise-c} we write $\vert\cfct(\lambda)\vert^{-2}=P(\lambda)+E(\lambda)$, where
\[\vert E(\lambda)\vert = \vert P(\lambda)\vert\cdot\begin{cases} 0&\text{whenever } q=0, p=2k\\
\bigl|1-\coth\bigl(\frac{\pi\lambda}{2}\bigr)\bigr|&\text{whenever } q=2l+1, p=4k+2\\ \bigl|1-\tanh\bigl(\frac{\pi\lambda}{2}\bigr)\bigr|&\text{otherwise}\end{cases}\]
cf. the proof of Lemma~4.2 in \cite{Stanton-Tomas}. Since $\vert\cfct(\lambda)\vert^{-2}\thicksim\lambda^{n-1}$ as $\lambda\to\infty$, we can at least say that $\text{deg }P(\lambda)=n-1$.

A similar choice of parameters $\alpha,\beta$ reveals that even spherical
analysis on Damek--Ricci spaces is subsumed by the present setup. This was already exploited in \cite{Anker-Damek-Yacoub} in order to extend results from spherical analysis on rank one symmetric spaces to the framework of Damek--Ricci spaces.
\end{exa}

Let $d\nu(\lambda)=d\nu_{\alpha,\beta}(\lambda)=(2\pi)^{-\frac{1}{2}}\vert\cfct(\lambda)\vert^{-2}\,d\lambda$ and denote by $L^p(d\nu)$ the associated weighted Lebesgue space on $\R^+$; note that $\cfct(\lambda)\cfct(-\lambda)=\cfct(\lambda)\cfct(\lambda)=\vert\cfct(\lambda)\vert^2$ whenever $\alpha,\beta,\lambda\in\R$. The Jacobi transform, initially defined for, say
a function $f\in C_c^\infty(\R^+)$ by
\[\widehat{f}(\lambda)=\frac{\sqrt{\pi}}{\Gamma(\alpha+1)}\int_0^\infty f(t)\varphi_\lambda(t)\,d\mu(t)\]
extends to a unitary isomorphism from $L^2(d\mu)$ onto $L^2(d\nu)$,
and the inversion formula is the statement that
\[f(t)=\int_0^\infty\widehat{f}(\lambda)\varphi_\lambda(t)\,d\nu(\lambda)\]
holds in the $L^2$-sense, cf. \cite[Formula~4.5]{Koornwinder-newproof}. The limiting case $\alpha=\beta=-\frac{1}{2}$ is the
Fourier-cosine transform, which we will not study. One easily verifies that
$\widehat{\mathcal{L}f}(\lambda)=-(\lambda^2+\rho^2)\widehat{f}(\lambda)$.
\section{The Disc Multiplier: Statement of Results}\label{sec.disc-results}
Our starting point in defining the disc multiplier is the inversion formula for the Jacobi transform, that is,
\[f(t)=\int_0^\infty \widehat{f}(\lambda)\varphi_\lambda(t)\,d\nu(\lambda),\]
where $d\nu(\lambda)=\vert\cfct(\lambda)\vert^{-2}\,d\lambda$.  Let  $S_Rf(t)=\int_0^R\widehat{f}(\lambda)\varphi_\lambda(t)\,d\nu(\lambda)$ and notice that for well-behaved functions $f$ (say, in $C_c^\infty(\R^+)$), $S_Rf$ may be written as an integral operator
\[
S_Rf(t)=\int_0^R\biggl\{\int_0^\infty f(r)\varphi_\lambda(r)\,d\mu(r)\biggr\}\varphi_\lambda(t)\,d\nu(\lambda)
=\int_0^\infty K_R(t,r)f(r)\,d\mu(r)\]
where $K_R(t,r)=\int_0^R\varphi_\lambda(t)\varphi_\lambda(r)\,d\nu(\lambda)$.
The goal of the present paper is to investigate the mapping properties of the associated maximal operator $S_*:f\mapsto S_* f$,
\[S_* f(t)=\sup_{R>0}\vert S_Rf(t)\vert\]
in order to establish almost everywhere convergence, $S_Rf(t)\to f(t)$, for $f$ in $L^p(d\mu)$, for a nontrivial range of $p$. The investigation follows  \cite{Meaney-Prestini_invtrans} very closely, but several complications of a purely technical nature (the Jacobi parameters $\alpha$, $\beta$ not being integers, for example), will make the presentation lengthier. The philosophy is simple, however; since the functions $\varphi_\lambda$ behave locally as an Euclidean eigenfunction (meaning a Bessel function since we always have the spherical analysis in mind), we should analyse the kernel $K_R$ in different regions of the $(t,r)$-domain $\R_+\times\R_+$ to probe similarities with as well as deviations from a purely Euclidean harmonic analysis. This will imply a decomposition of $S_Rf$ as the sum $S_Rf(t)=\sum_{i=1}^4S_{i,R}f(t)$, where $S_{i,R}f(t)=\int_0^\infty K_{i,R}(t,r)f(r)\,d\mu(r)$ and $K_{i,R}(t,r)=1_{A_i}(t,r)K_R(t,r)$, $i=1,\ldots,4$, where

\begin{minipage}{\textwidth}
\vspace{1cm}
\begin{minipage}{0.4\textwidth}
\[\begin{split}
A_1&=\{(t,r)\,:\, 0\leq t, r\leq R_0\}\\
A_2&=\{(t,r)\,:\,t,r\gg R_0\}\\
A_3&=\{(t,r)\,:\,t\gg 1, 0\leq r\leq R_0\}\\	
A_4&=\{(t,r)\,:\, 0\leq t\leq R_0, r\gg R_0\}
\end{split}\]
\end{minipage}
\hspace{1cm}
\begin{minipage}{0.4\textwidth}
\includegraphics[scale=0.5]{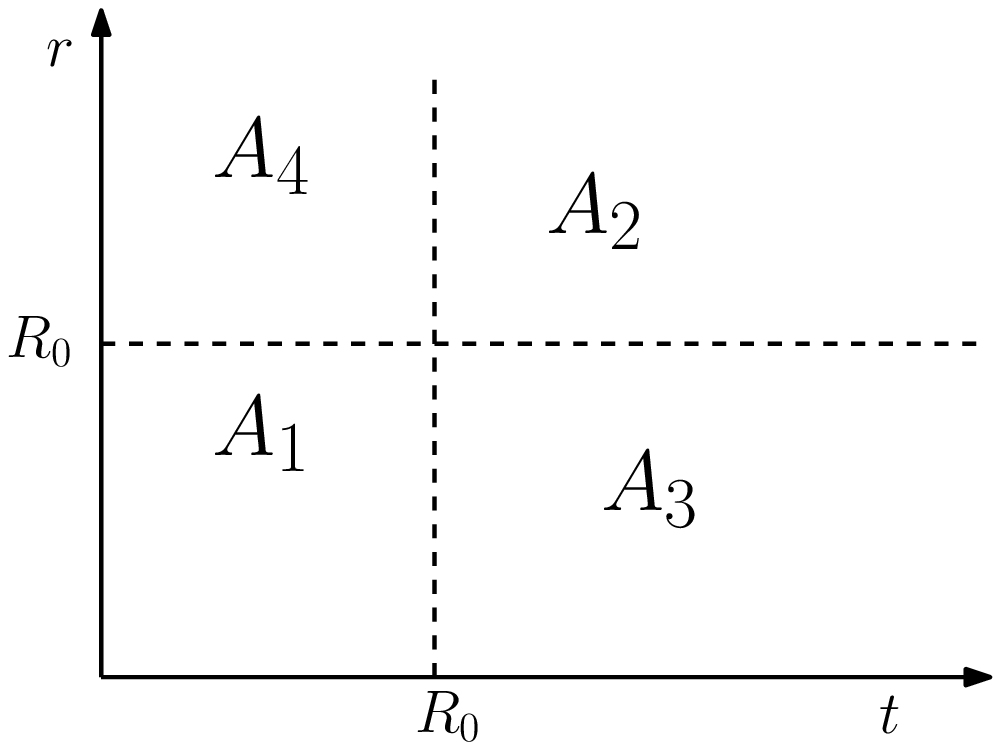}
\end{minipage}
\vspace{1cm}
\end{minipage}

To be more precise, the constant $R_0$ will be chosen as in the following technical lemma (the proof of which can be found in \cite{Stanton-Tomas} for rank one symmetric spaces and more generally for Jacobi functions in \cite{Johansen-exp1}). Here  $J_\mu(z)$ is the usual Bessel function of order $\mu$ and
$\mathcal{J}_\mu(z)$ is the modified Bessel function defined by
$\mathcal{J}_\mu(z)=2^{\mu-1}\Gamma(\frac{1}{2})\Gamma(\mu+\frac{1}{2})z^{-\mu}J_\mu(z)$.

\begin{lem}\label{lemma.asymptoticEXP}
Assume $\Re\alpha>\frac{1}{2}$, $\Re\alpha >\Re\beta>-\frac{1}{2}$, and that $\lambda$ belongs either to a compact subset of
$\C\setminus (-i\N)$ or a set of the form
\[D_{\varepsilon,\gamma}=\{\lambda\in\C\,:\, \gamma\geq\Im\lambda\geq -\varepsilon\vert\Re\lambda\vert\}\]
for some $\varepsilon,\gamma\geq 0$. There exist constants $R_0, R_1\in
(1,\sqrt{\frac{\pi}{2}})$ with $R_0^2<R_1$ such that for every $M\in\N$ and
every $t\in[0,R_0]$
\begin{eqnarray*}
\varphi_\lambda^{(\alpha,\beta)}(t) =
\frac{2\Gamma(\alpha+1)}{\Gamma(\alpha+\tfrac{1}{2})\Gamma(\tfrac{1}{2})}\frac{t^{\alpha+\frac{1}{2}}}{\sqrt{\Delta(t)}} \sum_{m=0}^\infty
a_m(t)t^{2m}\mathcal{J}_{m+\alpha}(\lambda t)
\\
\varphi_\lambda^{(\alpha,\beta)}(t) =
\frac{2\Gamma(\alpha+1)}{\Gamma(\alpha+\tfrac{1}{2})\Gamma(\tfrac{1}{2})}\frac{t^{\alpha+\frac{1}{2}}}{\sqrt{\Delta(t)}} \sum_{m=0}^M
a_m(t)t^{2m}\mathcal{J}_{m+\alpha}(\lambda t)+E_{M+1}(\lambda t),
\end{eqnarray*}
where
$a_0(t)\equiv 1$  and $\vert a_m(t)\vert\leq c_\alpha(t)R_1^{-(\Re\alpha+m-\frac{1}{2})}\text{ for all } m\in\N$. Additionally, the error term $E_{M+1}$ is bounded as follows:
\begin{equation*}
\vert E_{M+1}(\lambda t)\vert \leq \begin{cases} c_Mt^{2(M+1)}&\text{if }\vert\lambda t\vert\leq 1\\
c_M t^{2(M+1)}\vert\lambda t\vert^{-(\Re\alpha+M+1)}&\text{if
}\vert\lambda t\vert>1.\end{cases}
\end{equation*}
\end{lem}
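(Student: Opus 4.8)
The plan is to reduce the hypergeometric equation defining $\varphi_\lambda$ to a perturbed Bessel equation and then build the Bessel expansion by the classical method of successive corrections. First I would eliminate the first-order term through the substitution $\varphi_\lambda(t)=\Delta(t)^{-1/2}u(t)$, which converts the Jacobi equation $\varphi_\lambda''+\frac{\Delta'}{\Delta}\varphi_\lambda'+(\lambda^2+\rho^2)\varphi_\lambda=0$ into the Schr\"odinger-type form
\[
u''+\Bigl(\lambda^2+\rho^2-\tfrac12\tfrac{\Delta''}{\Delta}+\tfrac14\bigl(\tfrac{\Delta'}{\Delta}\bigr)^2\Bigr)u=0.
\]
Since $\Delta(t)\sim(2t)^{2\alpha+1}$ as $t\to 0$, the potential carries the singular term $\tfrac{1/4-\alpha^2}{t^2}$, so that the remainder $V(t):=\rho^2-\tfrac12\tfrac{\Delta''}{\Delta}+\tfrac14(\tfrac{\Delta'}{\Delta})^2-\tfrac{1/4-\alpha^2}{t^2}$ is real-analytic and even near the origin. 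Its radius of analyticity is what determines the constant $R_1$ (and then $R_0$, subject to $R_0^2<R_1$); the truncated equation $u''+(\lambda^2+\tfrac{1/4-\alpha^2}{t^2})u=0$ is exactly Bessel's equation, with solution $t^{1/2}J_\alpha(\lambda t)$ regular at $0$.

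Undoing the substitution shows that the unperturbed solution is proportional to $\frac{t^{\alpha+1/2}}{\sqrt{\Delta(t)}}\mathcal{J}_\alpha(\lambda t)$, which fixes the leading coefficient $a_0\equiv 1$. I would then seek the full solution in the stated form $\varphi_\lambda(t)=\frac{t^{\alpha+1/2}}{\sqrt{\Delta(t)}}\sum_m a_m(t)t^{2m}\mathcal{J}_{m+\alpha}(\lambda t)$, substitute it into the equation, and use the differentiation identity $\mathcal{J}_\nu'(z)=-\frac{z}{2\nu+1}\mathcal{J}_{\nu+1}(z)$ together with Bessel's equation for each $\mathcal{J}_{m+\alpha}$. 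Collecting the coefficient of each $\mathcal{J}_{m+\alpha}(\lambda t)$ yields a triangular system of first-order linear ODEs for the $a_m(t)$ in which the $m$-th equation is driven by $a_0,\dots,a_{m-1}$ and by the Taylor coefficients (in $t^2$) of $V$. Solving this recursion from $a_0\equiv 1$ determines every $a_m(t)$ explicitly by quadrature.

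The coefficient bound $|a_m(t)|\le c_\alpha(t)R_1^{-(\Re\alpha+m-1/2)}$ should then follow by induction on $m$: at each step the contribution is controlled by the Taylor coefficients of the analytic function $V$, whose growth is governed by $R_1$, and this geometric decay both closes the induction and guarantees convergence of the series on $[0,R_0]$. For the remainder I would estimate the tail $\sum_{m>M}$ using uniform bounds on the normalized Bessel functions: $|\mathcal{J}_\nu(z)|$ stays bounded for $|z|\le 1$, which gives the $t^{2(M+1)}$ estimate, while for $|z|>1$ the asymptotics $\mathcal{J}_\nu(z)=O(|z|^{-\nu-1/2})$ produce the factor $|\lambda t|^{-(\Re\alpha+M+1)}$.

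The hard part will be uniformity in $\lambda$ over the non-compact domain $D_{\varepsilon,\gamma}$: the Bessel asymptotics and the convergence of the series must hold uniformly as $\Re\lambda\to\infty$ with $\Im\lambda$ allowed to be slightly negative, so neither the constants $c_M$ nor the radius $R_1$ may degenerate along this region. A secondary difficulty, absent for rank one symmetric spaces, is that for non-integer $\alpha,\beta$ the function $V$ is a genuine non-terminating power series in $t^2$, so the recursion never closes after finitely many steps and the geometric coefficient bound has to be extracted from the recursion itself rather than read off from an explicit finite formula.
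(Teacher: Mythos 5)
Your plan is essentially the proof the paper relies on: the paper does not prove this lemma in-house but defers to \cite{Stanton-Tomas} (rank one symmetric spaces, following Schindler) and \cite{Johansen-exp1} (general Jacobi functions), and those proofs proceed exactly as you describe --- the substitution $u=\sqrt{\Delta}\,\varphi_\lambda$ reducing the Jacobi equation to a perturbed Bessel equation with singular term $(\tfrac{1}{4}-\alpha^2)/t^2$, the Bessel-series ansatz with coefficients $a_m$ determined recursively by quadrature, geometric coefficient bounds coming from the analyticity radius of the perturbation (which is precisely where $R_1$ and the requirement $R_0^2<R_1$ enter), and tail estimates from the standard bounds on $\mathcal{J}_\nu$. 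Your supporting computations are also correct, including the identity $\mathcal{J}_\nu'(z)=-\tfrac{z}{2\nu+1}\mathcal{J}_{\nu+1}(z)$ under the paper's normalization of the modified Bessel function.
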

\medskip

In the following four subsections will establish the mapping properties of the associated four maximal operators $S_{i*}$ individually, and the main theorem will then follow by noting that $\vert S_* f(t)\vert\leq\sum_{i=1}^4\vert S_{i,*}f(t)\vert$. The investigation in \cite{Meaney-Prestini_invtrans} and its outcome may be summarized roughly as follows:  For $f\in L^s(K\backslash G/K)$, we split the maximal operator $S_*$ associated to the ``disc multiplier'' as
\[S_*f=S_{1,*}f + S_{2,*}f + S_{3,*}f + S_{4,*}f,\]
where
\begin{itemize}
\item $S_{1,*}f$ is bounded on $L^s(K\backslash G/K)$ for $\frac{2n}{n+1}<s<\frac{2n}{n-1}$ (this is the `Herz range')
\item $S_{2,*}f$ is bounded into $L^2(G)+L^s(G)$ for $1<s\leq 2$
\item $S_{3,*}f$ is bounded into $L^2(G)$ for $1<s\leq 2$
\item $S_{4,*}f$ is bounded into $L^2(G)$ for $\frac{2n}{n+1}<s$.
\end{itemize}
It thus follows -- and this is the main result of the paper \cite{Meaney-Prestini_invtrans} - that $S_*f$ belongs to $L^2(G)+L^s(G)$ for $\frac{2n}{n+1}<s\leq 2$ (since $2\leq\frac{2n}{n-1}$ for all $n\in\N$). Our first result is a generalization theoreof to the setting of Jacobi analysis. For the remainder of the paper we set $p_0=\frac{4\alpha+4}{2\alpha+3}$ and $p_1=\frac{4\alpha+4}{2\alpha+1}$.

\begin{thm}\label{thm.mapping.properties}
Assume $\alpha>\beta>-\frac{1}{2}$. Then
\begin{enumerate}
\romannum
\item $S_{1,*}$ is bounded on $L^p(\R_+,d\mu)$ for $p\in(p_0,p_1)$;
\item $S_{2,*}$ is bounded from $L^p(\R_+,d\mu)$ into $L^p(\R,d\mu)+L^2(\R,d\mu)$ for all $p\in(1,2]$;
\item $S_{3,*}$ is bounded on  $L^p(\R_+,d\mu)$ for all $p\in(1,2]$;
\item $S_{4,*}$ is bounded from $L^p(\R_+,d\mu)$ into $L^2(\R,d\mu)$ for all $p\in(p_0,\infty)$.
\end{enumerate}
It thus follows that $S_*$ is bounded from $L^p(\R_+,d\mu)$ into $L^p(\R,d\mu)+L^2(\R,d\mu)$ for all $p\in(p_0,2]$.
\end{thm}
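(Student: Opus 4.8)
The plan is to obtain the final assertion as a direct assembly of parts (i)--(iv), the only genuine point being the passage from a pointwise majorant lying in $L^p(d\mu)+L^2(d\mu)$ to actual membership in that sum space. Fix $p\in(p_0,2]$. First I would check that this single value of $p$ falls into all four index ranges. Since $p_1=\frac{4\alpha+4}{2\alpha+1}>2$ (as $4\alpha+4>4\alpha+2$) and $p_0=\frac{4\alpha+4}{2\alpha+3}>1$ exactly when $\alpha>-\frac12$, the standing hypothesis $\alpha>\beta>-\frac12$ forces $1<p_0<p\le 2<p_1$. Hence $p\in(p_0,p_1)$ and (i) applies; $p\in(1,2]$ and (ii), (iii) apply; $p\in(p_0,\infty)$ and (iv) applies.

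Next I would record the elementary subadditivity of the maximal operator. Because $S_Rf=\sum_{i=1}^4 S_{i,R}f$ for every $R>0$ by the definition of the kernels $K_{i,R}=1_{A_i}K_R$, one has $\vert S_Rf(t)\vert\le\sum_{i=1}^4\vert S_{i,R}f(t)\vert\le\sum_{i=1}^4 S_{i,*}f(t)$, and taking the supremum over $R>0$ gives the pointwise bound
\[0\le S_*f(t)\le\sum_{i=1}^4 S_{i,*}f(t).\]
By (i) and (iii) the nonnegative functions $S_{1,*}f$ and $S_{3,*}f$ lie in $L^p(d\mu)$ with norm $\lesssim\|f\|_{L^p(d\mu)}$; by (iv) the nonnegative function $S_{4,*}f$ lies in $L^2(d\mu)$ with norm $\lesssim\|f\|_{L^p(d\mu)}$; and by (ii) we may write $S_{2,*}f=g+h$ with $\|g\|_{L^p(d\mu)}+\|h\|_{L^2(d\mu)}\lesssim\|f\|_{L^p(d\mu)}$. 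Since $S_{2,*}f=g+h\le\vert g\vert+\vert h\vert$, the choice
\[G:=S_{1,*}f+\vert g\vert+S_{3,*}f,\qquad H:=\vert h\vert+S_{4,*}f\]
yields nonnegative functions with $G\in L^p(d\mu)$, $H\in L^2(d\mu)$, norms $\|G\|_{L^p(d\mu)}+\|H\|_{L^2(d\mu)}\lesssim\|f\|_{L^p(d\mu)}$, and the pointwise majorization $S_*f\le\sum_{i=1}^4 S_{i,*}f\le G+H$.

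Finally I would realize $S_*f$ as an explicit element of the sum space by truncation, which is the one delicate step. Put $\tilde g:=\min(S_*f,G)$ and $\tilde h:=S_*f-\tilde g=(S_*f-G)_+$. Since $0\le S_*f$ and $G\ge0$ one has $0\le\tilde g\le G$, whence $\tilde g\in L^p(d\mu)$ with $\|\tilde g\|_{L^p(d\mu)}\le\|G\|_{L^p(d\mu)}$; and since $S_*f-G\le H$ with $H\ge0$ one has $0\le\tilde h\le H$, whence $\tilde h\in L^2(d\mu)$ with $\|\tilde h\|_{L^2(d\mu)}\le\|H\|_{L^2(d\mu)}$. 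As $S_*f=\tilde g+\tilde h$, this exhibits $S_*f$ in $L^p(d\mu)+L^2(d\mu)$ with $\|\tilde g\|_{L^p(d\mu)}+\|\tilde h\|_{L^2(d\mu)}\lesssim\|f\|_{L^p(d\mu)}$, the desired bound. I expect the only point requiring care to be precisely this lattice-type step: a pointwise bound by an element of $L^p+L^2$ does not by itself give membership, and the $\min/\max$ truncation is what repairs it, everything else being bookkeeping of the index ranges secured in parts (i)--(iv).
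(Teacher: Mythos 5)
There is a genuine gap, and it is structural: you have proved only the last sentence of the theorem, not the theorem. Parts (i)--(iv) are \emph{assertions} of the theorem, not hypotheses you may assume; they constitute essentially all of its content, and the paper devotes the whole of Section~4 to establishing them. Your proposal treats them as given and then carries out the assembly step, which the paper itself dispatches in a single remark (``the main theorem will then follow by noting that $\vert S_* f(t)\vert\leq\sum_{i=1}^4\vert S_{i,*}f(t)\vert$''). What is missing is therefore the actual work: for (i), the decomposition of the localized kernel $K_{1,R}$ via the Bessel-function expansion of $\varphi_\lambda$ (Lemma~\ref{lemma.asymptoticEXP}) and the refined asymptotics of $\vert\cfct(\lambda)\vert^{-2}$ (Lemma~\ref{lemma.precise-c}), reducing the main terms to the Hankel partial-sum maximal operator (Kanjin's range, which is where $p_0$ and $p_1$ come from) and to the Carleson operator with Prestini's weighted estimates; for (ii), the Harish-Chandra series for $\phi_\lambda$, the Gangolli estimates, and the operator $U$ of Proposition~\ref{prop.U}, which is precisely the device that copes with the exponential volume growth of $\Delta(t)\,dt$ and produces the sum space $L^p+L^2$ in the first place; for (iii), the kernel bound $\vert K_R(t,r)\vert\lesssim e^{-\rho t}t^{-(\alpha+\frac12)}r^{-1}$ obtained by repeated integration by parts against the oscillating factors $e^{\pm i\lambda r}$; and for (iv), the symmetric kernel estimate and the resulting $(L^p,L^2)$ bound. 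None of this can be bypassed: the index ranges in (i)--(iv) are not formal bookkeeping but the outcome of these estimates, and in particular the appearance of $p_0$ as the left endpoint of the final range is inherited from the Euclidean/Hankel behavior of $S_{1,*}$, which your argument never touches.

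For what it is worth, the part you did write is correct, and the lattice-type truncation $\tilde g=\min(S_*f,G)$, $\tilde h=(S_*f-G)_+$ is a legitimate (and slightly more careful) way to convert the pointwise majorization $S_*f\leq G+H$ into genuine membership in $L^p(d\mu)+L^2(d\mu)$ with norm control; your verification that $(p_0,2]\subset(p_0,p_1)\cap(1,2]\cap(p_0,\infty)$ is also correct. But as a proof of Theorem~\ref{thm.mapping.properties} the proposal establishes only the implication (i)--(iv) $\Rightarrow$ conclusion, which is the one step the paper regards as trivial.
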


\begin{thm}\label{thm.divergence-at-endpoint}
There exists a compactly supported function $f$ in $L^{p_0}(d\mu)$ with the property that $\{S_Rf(x)\}_{R>1}$ diverges for almost every $x\in\R_+$.
\end{thm}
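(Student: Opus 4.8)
The plan is to produce the divergent function by a lacunary superposition of test functions, each forcing the maximal operator $S_*$ to be large on a fixed-proportion set, after reducing the whole analysis to the behaviour of the Hankel (Bessel) disc multiplier of order $\alpha$. The first observation is that $p_0=\frac{4\alpha+4}{2\alpha+3}=\frac{2n}{n+1}$ with the formal dimension $n=2\alpha+2$, so $p_0$ is exactly the radial endpoint exponent for the Euclidean ball multiplier in dimension $n$; the divergence we seek is therefore the Jacobi-analytic shadow of the known Euclidean and Hankel endpoint divergence in \cite{RomeraSoria} and \cite{Colzani-Hankel}. Accordingly I would take $f$ supported in a fixed small interval $[0,R_0]$ so that the local expansion of Lemma \ref{lemma.asymptoticEXP} applies to $\varphi_\lambda(t)$ throughout the support, writing $\varphi_\lambda(t)=c\,t^{\alpha+1/2}\Delta(t)^{-1/2}\mathcal{J}_\alpha(\lambda t)+(\text{lower-order terms})$ and, via Lemma \ref{lemma.precise-c}, replacing $d\nu(\lambda)=\dfct(\lambda)\,d\lambda$ by its leading term $c_0\lambda^{2\alpha+1}\,d\lambda$ up to a controllable remainder. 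Under these reductions $S_R$ restricted to such $f$ coincides, modulo error operators, with the Hankel partial-sum operator $H_Rf(t)=\int_0^R\widehat{f}^{\,\mathrm{H}}(\lambda)\,\mathcal{J}_\alpha(\lambda t)\,c_0\lambda^{2\alpha+1}\,d\lambda$, where $\widehat{f}^{\,\mathrm{H}}$ denotes the Hankel transform of order $\alpha$.

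Next I would establish the quantitative blow-up that drives the construction: the existence of functions $f_N$ with $\|f_N\|_{L^{p_0}(d\mu)}\lesssim 1$, supported in $[0,R_0]$, and of radii $R_N\to\infty$ together with sets $E_N\subset[0,R_0]$ of measure $\mu(E_N)\ge\delta>0$ on which $|S_{R_N}f_N|\ge\Lambda_N$ with $\Lambda_N\to\infty$. In the Hankel model these $f_N$ are the usual critical examples, essentially truncated, modulated pieces of the conjugate kernel whose Hankel transform has the borderline decay $\lambda^{-(\alpha+1)}$ near $\lambda=R_N$, for which the failure of a uniform bound at $p=p_0$ is precisely the statement that $H_*$ is not of weak type $(p_0,p_0)$; this is the content transferred from \cite{RomeraSoria,Colzani-Hankel}. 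The point is that at $p_0$ the pairing $\int_0^{R_N}\widehat{f_N}^{\,\mathrm{H}}(\lambda)\mathcal{J}_\alpha(\lambda t)c_0\lambda^{2\alpha+1}\,d\lambda$ grows like $\log N$ (the critical logarithmic divergence), whereas the $L^{p_0}$ norm stays bounded.

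Having the blow-up, I would form $f=\sum_{k}\varepsilon_k f_{N_k}$ with $\sum_k\varepsilon_k^{p_0}<\infty$, so that $f\in L^{p_0}(d\mu)$ is compactly supported, choosing $N_k$ increasing so rapidly that the interactions between distinct humps, as well as the accumulated contribution of the error operators from the two reductions above, are summable and negligible on each $E_{N_k}$. A Borel--Cantelli / second-moment argument on the sets $E_{N_k}$, arranged to be quasi-independent, then yields $\sup_{R>1}|S_Rf(x)|=\infty$, and hence divergence of $\{S_Rf(x)\}$, for a.e.\ $x$ in a set of positive measure. To upgrade this to a.e.\ $x\in\R_+$ I would exploit the invariance of the construction: by repeating it based at a countable family of centres whose generalized translates cover $\R_+$ up to a null set, and superposing, the divergence set is seen to be invariant and of positive measure in every annulus, hence co-null.

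The main obstacle is controlling the two reductions uniformly in $R$. Unlike the flat case treated in \cite{ElKamel-Yacoub}, the exponential volume growth of $d\mu$ and the fact that $\alpha,\beta$ are not tied to integer root multiplicities mean that neither the Bessel expansion of Lemma \ref{lemma.asymptoticEXP} (with its error term $E_{M+1}$) nor the replacement of $\dfct(\lambda)$ by $c_0\lambda^{2\alpha+1}$ is exact; one must show that the resulting error operators, integrated up to arbitrarily large $R$, contribute a bounded, or at least $o(\Lambda_N)$, amount on the sets $E_N$ so that the genuine Hankel main term continues to dominate. The sharpened expansions in Lemma \ref{lemma.precise-c}(i)--(iii) are exactly what make this feasible. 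The second delicate point is the passage from divergence on a positive-measure set to divergence almost everywhere, for which the quasi-invariance of the Jacobi convolution structure must be used with care since the generalized translations are not $L^{p_0}$-isometries.
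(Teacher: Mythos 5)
Your reduction to the Hankel model and the existence of near-extremal functions with logarithmic blow-up at $p_0$ are consistent with the paper's quantitative input, but the core of your argument --- gluing lacunary humps, invoking Borel--Cantelli on quasi-independent sets $E_{N_k}$, and then upgrading divergence from a positive-measure set to almost everywhere via ``generalized translates'' and invariance of the divergence set --- has a genuine gap. On $(\R_+,d\mu)$ there is no measure-preserving transitive action: the Jacobi translations are not isometries on $L^{p_0}(d\mu)$, the measure has exponential growth, and the divergence set of a fixed compactly supported $f$ carries no invariance one could exploit; Stein's maximal principle, which your final step implicitly invokes, is simply unavailable here, and you acknowledge the delicacy without supplying a substitute. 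The quasi-independence of the $E_{N_k}$ is likewise asserted rather than arranged: these sets come from oscillation of Bessel kernels at the scales $R_{N_k}$, and their correlations are precisely what the interaction terms fail to control at the endpoint. Note also that mere unboundedness of the maximal operator on $L^{p_0}$ does not by itself yield a single $f$ with a.e.\ divergence --- indeed Theorem \ref{thm.endpoint-Lorentz} shows $S_{1,*}$ \emph{is} bounded from $L^{p_0,1}$ into $L^{p_0,\infty}$, so the blow-up is only logarithmic and a condensation-of-singularities argument needs an extra idea to convert it into pointwise divergence.

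The paper supplies exactly that idea through a Cantor--Lebesgue mechanism, which your proposal is missing. Lemma \ref{lemma.MP} (resting on Kanjin's transfer result, Lemma \ref{lemma.Kanjin-CL}) shows that if $S_Rf(t)$ converges for $t$ in \emph{any} subset of $[0,1]$ of positive measure, then necessarily
\[\lim_{R\to\infty}\int_R^{R+h}\widehat{f}(\lambda)\vert\cfct(\lambda)\vert^{-1}\,d\lambda=0\quad\text{uniformly in } h\in[0,1].\]
One then considers the linear functionals $F_R(f)=\int_R^{R+1}\widehat{f}(\lambda)\vert\cfct(\lambda)\vert^{-1}\,d\lambda$ on $L^{p_0}$ functions supported in $[0,1]$ and shows, using the local Bessel expansion and the error bounds you correctly identified, that $\|F_R\|\succsim(\log R)^{1/p_0'}$; this is your logarithmic blow-up, but repackaged as unboundedness of a family of \emph{functionals} rather than of the maximal operator. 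Banach--Steinhaus then produces a single compactly supported $f\in L^{p_0}(d\mu)$ violating the Cantor--Lebesgue condition, whence $\{S_Rf(t)\}$ diverges at almost every point --- with no gluing, no independence, and no translation structure required. If you wish to salvage your construction, the repair is to discard the invariance/Borel--Cantelli step entirely and route the conclusion through this Cantor--Lebesgue argument.
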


The part most closely resembling the Euclidean counterpart of the disc multiplier is the piece $S_{1*}f$, where the kernel $K_R(t,r)$ is localized in both $t$ and $r$.  The remaining three pieces of $S_*f$ all derive their  mapping properties, to some extend, from the Kunze--Stein phenomenon, perhaps most clearly seen in $S_{3*}$. Philosophically, the localized part $S_1$ of $S_R$ (with $R=1$ for purpose of analogy) should correspond to the Euclidean disc multiplier acting on radial functions. Since the Euclidean disc multiplier is merely $L^2$-bounded when acting on functions not necessarily radial, we cannot except $S_1$ to be bounded on $L^p(G/K)$ unless $p=2$. Note that the full operator $S_R$ ($R=1$) is unbounded on $L^p(K\backslash G/K)$ for $p\neq 2$, since the corresponding multiplier cannot be analytically continued to the strip in the complex plane described in \cite{Clerc-Stein}.
\medskip

For the next result we must first recall the definition of Lorentz spaces.

\begin{defin}
Let $(X,\mu)$ be a measure space, $0<p<\infty$, and $0<q\leq \infty$. By the Lorentz space $L^{p,q}(X,\mu)$ we understand the space of equivalence classes of measurable functions $f$ with finite Lorentz space norm, $\|f\|_{L^{p,q}}<\infty$. Here
\[\|f\|_{L^{p,q}}=\begin{cases} \displaystyle\Bigl(q\int_0^\infty [t\mu(\{x\in X\,:\,\vert f(x)\vert>t\})^{1/p}]^q\,t^{-1}\,dt \Bigr)^{1/q}&\text{if }q<\infty\\
\sup_{t>0}t^{1/p}\mu(\{x\in X\,:\,\vert f(x)\vert>t\})&\text{if }q=\infty.\end{cases}\]
\end{defin}
See \cite[Chapter~1]{Grafakos} for a summary of the properties of Lorentz spaces.

\begin{thm}\label{thm.endpoint-Lorentz}
\begin{enumerate}
\romannum
\item The maximal operator
$S_{1,*}$ is bounded from $L^{p_i,1}(\R_+,d\mu)$ into $L^{p_i,\infty}(\R_+,d\mu)$, $i=0,1$, where $p_0=\frac{4\alpha+4}{2\alpha+3}$ and $p_1=\frac{4\alpha+4}{2\alpha+1}$.
\item The maximal operator $S_{4,*}$ is bounded from $L^{p_0,1}(\R_+,d\mu)$ into $L^2(\R,d\mu)$.
\end{enumerate}
The maximal operator $S_*$ is therefore bounded from $L^{p_0,1}(\R,d\mu)$ into the space $L^2(\R,d\mu)+L^{p_0,\infty}(\R,d\mu)$.
\end{thm}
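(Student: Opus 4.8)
The plan is to upgrade the interior bounds of Theorem~\ref{thm.mapping.properties} to endpoint statements on characteristic functions and then assemble the conclusion from the pointwise domination $S_*f\le\sum_{i=1}^4 S_{i,*}f$. The organizing device is the elementary reduction: if $T$ is sublinear and $Y$ is either $L^{p,\infty}(d\mu)$ with $p>1$ (hence normable) or $L^2(d\mu)$, and if $\|T1_E\|_{Y}\le C\,\mu(E)^{1/p}$ for every set $E\subset\R_+$ of finite $\mu$-measure, then $T$ extends to a bounded map $L^{p,1}(d\mu)\to Y$. This follows by decomposing $f\in L^{p,1}$ along its level sets as $f=\sum_k c_k 1_{E_k}$ with $\sum_k |c_k|\,\mu(E_k)^{1/p}\lesssim\|f\|_{L^{p,1}}$ and invoking subadditivity of the norm on $Y$ together with sublinearity of $T$. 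Both (i) and (ii) are thereby reduced to a single estimate on indicator functions.

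For part (i) I would exploit that on the localized region $A_1=\{0\le t,r\le R_0\}$ the Jacobi functions are governed, by Lemma~\ref{lemma.asymptoticEXP}, by the leading Bessel term $\varphi_\lambda(t)\sim c\,t^{\alpha+\frac12}\Delta(t)^{-1/2}\mathcal{J}_\alpha(\lambda t)$, so that $K_{1,R}(t,r)$ is, up to a rapidly convergent correction series and the density asymptotics of Lemma~\ref{lemma.precise-c}, the kernel of the ball multiplier for the Hankel transform of order $\alpha$, i.e.\ a disc multiplier in ``dimension'' $n=2\alpha+2$. The exponents $p_0=\frac{2n}{n+1}$ and $p_1=\frac{2n}{n-1}$ are precisely the conjugate Herz endpoints of this Bessel problem, and the restricted weak type bounds $\|S_{1,*}1_E\|_{L^{p_i,\infty}}\lesssim\mu(E)^{1/p_i}$ are obtained by splitting the kernel into a near-diagonal part, controlled by a local Hardy--Littlewood maximal function, and an oscillatory off-diagonal tail whose decay is dictated by the Bessel asymptotics. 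These are exactly the values at which the strong $L^{p_i}$ estimate just fails, so that the room gained by working with $L^{p_i,1}$ in place of $L^{p_i}$ is essential.

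For part (ii) the interior statement of Theorem~\ref{thm.mapping.properties}(iv) already gives $S_{4,*}\colon L^p\to L^2$ for every $p>p_0$, with constant degenerating as $p\downarrow p_0$. On the region $A_4=\{0\le t\le R_0,\ r\gg R_0\}$ the kernel couples the Bessel expansion of $\varphi_\lambda(t)$ in $t$ with the exponential large-$r$ asymptotics of $\varphi_\lambda(r)$, whose factor $\Delta(r)^{-1/2}$ tames the exponential volume growth. Inserting these estimates and the Plancherel density from Lemma~\ref{lemma.precise-c}, I would bound $\|S_{4,*}1_E\|_{L^2(d\mu)}$ by an integral expression whose $r$-integration is borderline: it diverges at $p=p_0$ for a generic $L^{p_0}$ function but converges when tested on $1_E$ and measured against $\|\cdot\|_{L^{p_0,1}}$, yielding exactly $\|S_{4,*}1_E\|_{L^2}\lesssim\mu(E)^{1/p_0}$.

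Finally, the mapping property of $S_*$ follows by combining the four pieces at $p=p_0$. Parts (i) (with $i=0$) and (ii) give $S_{1,*}\colon L^{p_0,1}\to L^{p_0,\infty}$ and $S_{4,*}\colon L^{p_0,1}\to L^2$. For the remaining two I use the standard Lorentz nesting $L^{p_0,1}(d\mu)\hookrightarrow L^{p_0}(d\mu)\hookrightarrow L^{p_0,\infty}(d\mu)$: Theorem~\ref{thm.mapping.properties}(iii) then yields $S_{3,*}\colon L^{p_0,1}\to L^{p_0}\hookrightarrow L^{p_0,\infty}$, and Theorem~\ref{thm.mapping.properties}(ii) yields $S_{2,*}\colon L^{p_0,1}\to L^{p_0}+L^2\hookrightarrow L^{p_0,\infty}+L^2$. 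Summing through $S_*f\le\sum_{i=1}^4 S_{i,*}f$ places $S_*f$ in $L^2(d\mu)+L^{p_0,\infty}(d\mu)$, as asserted. I expect the principal difficulty to lie in the endpoint kernel analysis of part~(i): establishing the restricted weak type bound at the sharp Herz exponents $p_0,p_1$ demands the exact borderline decay of the Bessel-type kernel, for which the refined expansions of Lemmas~\ref{lemma.precise-c} and~\ref{lemma.asymptoticEXP}, rather than the cruder estimate $\vert\cfct(\lambda)\vert^{-2}\sim\lambda^{2\alpha+1}$, are indispensable.
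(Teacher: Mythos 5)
Your global architecture coincides with the paper's: reduce to endpoint statements for $S_{1,*}$ and $S_{4,*}$, dispose of $S_{2,*}$ and $S_{3,*}$ through the nesting $L^{p_0,1}\hookrightarrow L^{p_0}\hookrightarrow L^{p_0,\infty}$ (the paper phrases this as ``$S_{2,*}$ and $S_{3,*}$ do not behave worse on $L^{p_0}$''), and sum the four pieces at the end. Your part (ii) is also essentially the paper's Lemma \ref{lemma.S4-endpoint}: from the kernel bound \eqref{eqn.K4} one pairs $f\in L^{p_0,1}([0,R_0/2],d\mu)$ against $r\mapsto r^{-(\alpha+\frac{1}{2})}$, which is verified to lie in $L^{p_0',\infty}([0,R_0/2],d\mu)$, so the Lorentz--H\"older inequality gives $\|S_{4,*}f\|_{L^2(d\mu)}\lesssim\|f\|_{L^{p_0,1}}$ for \emph{all} $f$ directly; your detour through indicator functions is harmless but unnecessary there, and the restricted-weak-type reduction you describe (atomic decomposition of $L^{p,1}$ plus normability of $L^{p_i,\infty}$ for $p_i>1$ and of $L^2$) is legitimate.

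The genuine gap is in part (i), which is where the entire difficulty of the theorem is concentrated. You propose to obtain $\|S_{1,*}1_E\|_{L^{p_i,\infty}}\lesssim\mu(E)^{1/p_i}$ by splitting $K_{1,R}$ into ``a near-diagonal part, controlled by a local Hardy--Littlewood maximal function, and an oscillatory off-diagonal tail.'' That cannot work. Inserting the Bessel asymptotics $J_\alpha(x)\thicksim x^{-1/2}\cos\bigl(x-\tfrac{2\alpha+1}{4}\pi\bigr)+O(x^{-3/2})$ into $K_{1,R}$, the near-diagonal main term is a Dirichlet-type kernel producing terms $e^{iR(t-r)}/(t-r)$, and its supremum over $R$ is the \emph{Carleson operator} applied to $f\sqrt{\Delta}$ --- this is exactly how the paper arrives at \eqref{eqn.Carleson-estimate}. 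A Dirichlet kernel is not an approximate identity and admits no pointwise domination by the Hardy--Littlewood maximal function; were such a domination available, $S_{1,*}$ would be bounded on $L^p(d\mu)$ for every $p>1$, contradicting the sharpness of the Herz range, Theorem \ref{thm.divergence-at-endpoint}, and the Proposition following Theorem \ref{thm.endpoint-Lorentz}, which (quoting \cite[Theorem~II]{Colzani-Hankel}) shows that even the localized piece fails on $L^{p_0,r}$ for every $r>1$. What the endpoint actually requires --- and what the paper uses --- is an endpoint Lorentz bound for maximal \emph{partial-sum} operators: the leading piece $M_{1,R}^1$ is identified with the maximal Hankel partial-sum operator and handled by Colzani's theorem \cite{Colzani-Hankel}; the hardest piece $M_{1,R}^{0,(3)}$ is dominated by the Carleson operator, whose $L^{p_0,1}\to L^{p_0,\infty}$ bound in the weighted setting is extracted from the argument on page~75 of \cite{Colzani-Hankel}; only the remaining pieces are settled by the elementary bound $\vert M(t,r)\vert\lesssim t^{-(\alpha+\frac{1}{2})}r^{-(\alpha+\frac{1}{2})}$ together with Lorentz--H\"older and Chebyshev, which is the only mechanism your sketch actually supplies. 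Without a Carleson/Hankel endpoint ingredient of this kind, your part (i) remains an assertion rather than a proof.
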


This was not addressed by Meaney and Prestini but is to be seen as the Jacobi-analysis analogue of the endpoint result in \cite{RomeraSoria}. As for the sharpness of the Lorentz space indices, we mention the following result.

\begin{prop}
The disc multiplier $S_R$ is not bounded from $L^{p_0,r}(\R_+,d\mu)$ into the space $L^{p_0,\infty}(\R,d\mu)+L^2(\R,d\mu)$ for \emph{any} $r\in(1,\infty]$.
\end{prop}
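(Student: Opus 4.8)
The plan is to show that the obstruction already lives in the purely local piece $S_{1,R}$, and then to transplant the endpoint counterexample for the Hankel disc multiplier from \cite{RomeraSoria,Colzani-Hankel}. Fix $R>1$ and restrict attention to functions $f$ supported in $[0,R_0]$. For such $f$ the kernels $K_{2,R}$ and $K_{4,R}$, which are supported where $r\gg R_0$, annihilate $f$, so $S_Rf=S_{1,R}f+S_{3,R}f$. Since $S_{1,R}f$ is supported in $t\in[0,R_0]$ while $S_{3,R}f$ is supported in $t>R_0$, the two summands have disjoint supports; restricting any admissible decomposition of $S_Rf$ in $L^{p_0,\infty}(\R)+L^2(\R)$ to the interval $[0,R_0]$ therefore gives
\[\|S_Rf\|_{L^{p_0,\infty}+L^2}\gtrsim \|S_{1,R}f\|_{(L^{p_0,\infty}+L^2)([0,R_0])}.\]
Because $[0,R_0]$ has finite $\mu$-measure and $p_0<2$, on this interval one has $L^2\hookrightarrow L^{p_0,\infty}$, so the sum-space norm there is comparable to the $L^{p_0,\infty}$-norm. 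Thus it suffices to produce, for each $r\in(1,\infty]$, functions $f$ supported in $[0,R_0]$ with $\|f\|_{L^{p_0,r}(d\mu)}\lesssim 1$ but $\|S_{1,R}f\|_{L^{p_0,\infty}([0,R_0],d\mu)}$ arbitrarily large.

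Next I would compare $S_{1,R}$ with the model Hankel disc multiplier. Inserting the expansion of Lemma~\ref{lemma.asymptoticEXP} for $\varphi_\lambda(t)$ and $\varphi_\lambda(r)$ together with the asymptotics $|\cfct(\lambda)|^{-2}\sim c_0\lambda^{2\alpha+1}$ from Lemma~\ref{lemma.precise-c}, the leading term of $K_R(t,r)$ on $A_1$ is
\[c\,(tr)^{-\alpha}\int_0^R J_\alpha(\lambda t)J_\alpha(\lambda r)\,\lambda\,d\lambda,\]
which is exactly the kernel of the order-$\alpha$ Hankel disc multiplier acting on $L^p((0,R_0),r^{2\alpha+1}dr)$. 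The remaining terms carry additional powers of $\lambda^{-1}$ (and of $t,r$) and, together with the error bound $E_{M+1}$, define operators that are bounded on $L^p(d\mu)$ for $p$ in an open interval about $p_0$, hence on $L^{p_0,r}(d\mu)$ by real interpolation, so they cannot cancel an unbounded contribution. This reduces the statement to the sharpness of the endpoint $L^{p_0,1}\to L^{p_0,\infty}$ bound for the Hankel disc multiplier, a flat model for which the local geometry of $[0,R_0]$ does not see the exponential volume growth, so the technique of \cite{Colzani-Hankel,RomeraSoria} applies.

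For the counterexample I would use a single logarithmically tuned function. Using $\mu([0,\rho])\sim\rho^{2\alpha+2}$ and $(2\alpha+2)/p_0=\alpha+\tfrac32$, the function
\[f_\gamma(r)=r^{-(\alpha+3/2)}\Bigl(\log\tfrac1r\Bigr)^{-\gamma}1_{(0,1/2)}(r)\]
has decreasing rearrangement $f_\gamma^*(s)\sim s^{-1/p_0}(\log\tfrac1s)^{-\gamma}$, so that $f_\gamma\in L^{p_0,r}(d\mu)$ precisely when $\gamma r>1$ while $f_\gamma\notin L^{p_0,1}(d\mu)$ when $\gamma\le 1$. For a given $r\in(1,\infty]$ I would fix $\gamma\in(1/r,1)$, which is nonempty exactly because $r>1$. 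The heart of the matter is then to show that the operator degrades the logarithmic factor by one power: from the near-diagonal asymptotics $\int_0^R J_\alpha(\lambda t)J_\alpha(\lambda r)\lambda\,d\lambda\sim (tr)^{-1/2}\frac{\sin R(t-r)}{t-r}$, integrated against $f_\gamma$ near the origin, one expects $S_{1,R}f_\gamma(t)\gtrsim t^{-(\alpha+3/2)}(\log\tfrac1t)^{1-\gamma}$ for small $t$. Solving $t^{-(\alpha+3/2)}(\log\tfrac1t)^{1-\gamma}=\tau$ gives $\mu(\{|S_{1,R}f_\gamma|>\tau\})\sim \tau^{-p_0}(\log\tau)^{(1-\gamma)p_0}$, whence $\tau\,\mu(\{|S_{1,R}f_\gamma|>\tau\})^{1/p_0}\sim(\log\tau)^{1-\gamma}\to\infty$ since $\gamma<1$, so $\|S_{1,R}f_\gamma\|_{L^{p_0,\infty}}=\infty$, the desired contradiction (working with a truncation of $f_\gamma$ if one prefers a uniformly bounded sequence).

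The main obstacle is this last step: establishing that the operator genuinely loses one logarithm at the endpoint, i.e.\ that the near-diagonal oscillation of the Bessel kernel does \emph{not} cancel the accumulation produced by the singularity of $f_\gamma$ at the origin. Making this rigorous requires the precise near-diagonal expansion of $K_R$ together with uniform control of the error $E_{M+1}$ and of the lower-order terms, so that the model computation survives the passage from the Hankel model back to the genuine Jacobi kernel. The bookkeeping for the Lorentz norms — the $\ell^r$ versus $\ell^\infty$ dichotomy that separates the admissible source $L^{p_0,1}$ from every $L^{p_0,r}$ with $r>1$ — is precisely the feature forcing the restriction to first Lorentz index $1$ in Theorem~\ref{thm.endpoint-Lorentz}.
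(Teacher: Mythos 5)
Your opening reduction is correct and is in fact the whole of the paper's proof strategy: the paper disposes of the Proposition in one line by observing that, according to \cite[Theorem~II]{Colzani-Hankel}, not even the localized piece $S_{1,R}$ has the stated mapping property. Everything you write after that reduction is an attempt to \emph{reprove} Colzani's theorem, and both steps you substitute for the citation have genuine gaps. The first is your dismissal of the non-Hankel pieces of $K_{1,R}$: you assert they are bounded on $L^p(d\mu)$ ``for $p$ in an open interval about $p_0$'' and then invoke real interpolation to get $L^{p_0,r}$-boundedness. But the paper's analysis (Subsection 4.1 and Section 5) gives boundedness of those pieces only for $p\in(p_0,p_1)$ -- $p_0$ is the left \emph{endpoint} of that interval, not an interior point -- and at $p_0$ itself only the restricted-weak-type bound $L^{p_0,1}\to L^{p_0,\infty}$; no interpolation from $(p_0,p_1)$ reaches $p_0$. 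This is not a technicality: the dominating kernels of those pieces have the product form $t^{-(\alpha+\frac12)}r^{-(\alpha+\frac12)}$, and $r\mapsto r^{-(\alpha+\frac12)}$ lies in $L^{p_0',\infty}([0,R_0],d\mu)$ but in no $L^{p_0',r'}$ with $r'<\infty$, so its pairing with a general $f\in L^{p_0,r}$, $r>1$, can diverge. Concretely, for your own test function $f_\gamma$ with $\gamma<1$, the only available bound on those pieces is $t^{-(\alpha+\frac12)}\int_0^{1/2}r^{-1}(\log\tfrac1r)^{-\gamma}\,dr=\infty$. Hence the assertion that the error terms ``cannot cancel an unbounded contribution'' is unsupported; on your $f_\gamma$ they are not even known to be finite.

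The second gap is the log-loss lower bound $S_{1,R}f_\gamma(t)\gtrsim t^{-(\alpha+3/2)}(\log\tfrac1t)^{1-\gamma}$, which you acknowledge you do not prove; worse, it is false for the fixed $R$ you freeze at the outset. Since $\alpha>-\tfrac12$ one has $f_\gamma\in L^1([0,R_0],d\mu)$, while $\vert K_{1,R}(t,r)\vert\leq c\int_0^R(1+\lambda)^{2\alpha+1}\,d\lambda<\infty$ because $\vert\varphi_\lambda(t)\vert$ is bounded on $[0,R_0]\times[0,R]$; so $S_{1,R}f_\gamma$ is a bounded function and cannot blow up as $t\to0$. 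The near-diagonal asymptotics $(tr)^{-1/2}\sin\bigl(R(t-r)\bigr)/(t-r)$ you integrate are valid only for $t,r\gtrsim 1/R$; divergence can only be produced by letting $R\to\infty$ together with $t\to0$, i.e.\ what fails is boundedness \emph{uniform in} $R$ (equivalently, for the maximal operator, or for a single $R$ in the Hankel model, where by homogeneity of the measure $r^{2\alpha+1}dr$ all the $S_R$ share one operator norm). That uniform failure is precisely the content of Colzani's theorem, and transplanting his construction to the genuine Jacobi kernel -- with control of the error terms, which as noted above are not small on $f_\gamma$ -- is the actual work. As it stands, your proposal establishes only the (correct, and shared with the paper) reduction to $S_{1,R}$; to finish, either quote \cite[Theorem~II]{Colzani-Hankel} as the paper does, or carry out the endpoint counterexample in full with the coupling between $R$ and $t$ made explicit.
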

\begin{proof}
The conclusion follows at once from the observation that not even the localized piece $S_R^1$ of the disc multiplier has the stated mapping property, according to \cite[Theorem~II]{Colzani-Hankel}.
\end{proof}

\section{Proof of the Mapping Properties for non-Critical Exponents}
The present section contains the lengthy proof of Theorem \ref{thm.mapping.properties} as well as Theorem \ref{thm.divergence-at-endpoint}. As already indicated one studies $S_*$ in four different regions of the $(t,r)$-plane, so we have split the proof into four subsections.

\subsection{Investigation of $S_{1,*}$}\label{subsec.S1}
We begin the lengthy examination of $S_*$ with an analysis of the behavior of the kernel $K_R$ when both arguments are small. We scale the corresponding operator $S_{1,R}$ slightly by writing
\[S_{1,R}f(t)=\int_{A(t)}K_{1,R}(t,r)f(r)\,d\mu(r),\quad A(t)=\begin{cases} [0,R_0]&\text{if } 0<t\leq \frac{R_0}{2}\\
[0,R_0/2]&\text{if } \frac{R_0}{2}<t<R_0\end{cases}.\] Moreover we assume $2(\alpha+1)$ is \emph{not} an integer since we may copy the proofs from \cite{Meaney-Prestini_invtrans} verbatim, with $n:=2(\alpha+1)$.
Recall from Lemma~\ref{lemma.asymptoticEXP} that $\varphi_\lambda(t)$ may be written as
\[\varphi_\lambda(t)=c\frac{t^{\alpha+\frac{1}{2}}}{\sqrt{\Delta(t)}}\bigl(\mathcal{J}_\alpha(\lambda t)+t^2a_1(1)\mathcal{J}_{\alpha+1}(\lambda t)\bigr)+E_2(\lambda,t),\]
by which
\[\begin{split}
\varphi_\lambda(t)\varphi_\lambda(r)&=c\frac{t^{\alpha+\frac{1}{2}}}{\sqrt{\Delta(t)}} \frac{r^{\alpha+\frac{1}{2}}}{\sqrt{\Delta(r)}}
\bigl(\mathcal{J}_\alpha(\lambda t)\mathcal{J}_\alpha(\lambda r) + \mathcal{J}_\alpha(\lambda t) r^2a_1(r)\mathcal{J}_{\alpha+1}(\lambda r)\\
&\qquad+t^2a_1(t)\mathcal{J}_{\alpha+1}(\lambda t)\mathcal{J}_\alpha(\lambda r) + t^2a_1(t)\mathcal{J}_{\alpha+1}(\lambda t) r^2a_1(r)\mathcal{J}_{\alpha+1}(\lambda r)\bigr)\\
&\qquad+c\frac{t^{\alpha+\frac{1}{2}}}{\sqrt{\Delta(t)}}\bigl(\mathcal{J}_\alpha(\lambda t)+t^2a_1(t)\mathcal{J}_{\alpha+1}(\lambda t)\bigr)E_2(\lambda,r)\\
&\qquad+c\frac{r^{\alpha+\frac{1}{2}}}{\sqrt{\Delta(r)}}\bigl(\mathcal{J}_\alpha(\lambda r)+r^2a_1(r)\mathcal{J}_{\alpha+1}(\lambda r)\bigr)E_2(\lambda,t)\\
&=c\frac{t^{\alpha+\frac{1}{2}}}{\sqrt{\Delta(t)}} \frac{r^{\alpha+\frac{1}{2}}}{\sqrt{\Delta(r)}}\biggl(c_1\frac{J_\alpha(\lambda t)}{(\lambda t)^\alpha}\frac{J_\alpha(\lambda r)}{(\lambda r)^\alpha} + c_2r^2a_1(r)\frac{J_\alpha(\lambda t)}{(\lambda t)^\alpha}\frac{J_{\alpha+1}(\lambda r)}{(\lambda r)^{\alpha+1}}\\
&\qquad +c_3t^2a_1(t)\frac{J_{\alpha+1}(\lambda t)}{(\lambda t)^{\alpha+1}}\frac{J_\alpha(\lambda r)}{(\lambda r)^\alpha}+ r^2t^2a_1(t)a_1(r)\frac{J_{\alpha+1}(\lambda t)}{(\lambda t)^{\alpha+1}}\frac{J_{\alpha+1}(\lambda r)}{(\lambda r)^{\alpha+1}}\biggr)\\
&\qquad+\text{negligible terms}
\end{split}\]
The indicated decomposition yields a compatible decomposition of $K_{1,R}$ and $S_{1,R}f(t)$, in the sense that $K_{1,R}=\sum_{i=1}^5 K_{1,R}^i$ and $S_{1,R}f(t)=\sum_{i=1}^5S_{1,R}^if(t)$, where
\[\begin{split}
K_{1,R}^1(t,r)&=\frac{t^{\alpha+\frac{1}{2}}}{\sqrt{\Delta(t)}}\frac{r^{\alpha+\frac{1}{2}}}{\sqrt{\Delta(r)}}\frac{1}{r^\alpha t^\alpha}\int_0^R\frac{J_\alpha(\lambda r)J_\alpha(\lambda t)}{\lambda^{2\alpha}}\,d\nu(\lambda)\\
K_{1,R}^2(t,r)&=r^2a_1(r)\frac{t^{\alpha+\frac{1}{2}}}{\sqrt{\Delta(t)}}\frac{r^{\alpha+\frac{1}{2}}}{\sqrt{\Delta(r)}}\frac{1}{r^{\alpha+1}t^\alpha}\int_0^R\frac{J_{\alpha+1}(\lambda r)J_\alpha(\lambda t)}{\lambda^{2\alpha+1}}\,d\nu(\lambda)\\
K_{1,R}^3(t,r)&=t^2a_1(t) \frac{t^{\alpha+\frac{1}{2}}}{\sqrt{\Delta(t)}}\frac{r^{\alpha+\frac{1}{2}}}{\sqrt{\Delta(r)}}\frac{1}{r^\alpha t^{\alpha+1}}\int_0^R\frac{J_\alpha(\lambda r)J_{\alpha+1}(\lambda t)}{\lambda^{2\alpha+1}}\,d\nu(\lambda)\\
K_{1,R}^4(t,r)&=r^2a_1(r)t^2a_1(t)\frac{t^{\alpha+\frac{1}{2}}}{\sqrt{\Delta(t)}}\frac{r^{\alpha+\frac{1}{2}}}{\sqrt{\Delta(r)}} \frac{1}{r^{\alpha+1}t^{\alpha+1}} \int_0^R\frac{J_{\alpha+1}(\lambda r)J_{\alpha+1}(\lambda t)}{\lambda^{2(\alpha+1)}}\,d\nu(\lambda)\\
K_{1,R}^5&=\text{sum of negligible terms}
\end{split}\]
(it  is to be understood that all functions are extended by zero to all of $\R$ for $t$ not in $[0,R_0]$), and
\[S_{1,R}^if(t)=\begin{cases}\displaystyle \int_{A(t)}K_{1,R}^i(t,r)f(r)\Delta(r)\,dr&\text{ for } 0\leq t\leq R_0, i=1,\ldots,5\\
0&\text{otherwise}\end{cases}\]
A slightly more convenient expression for $K_{1,R}^1(t,r)$ is obtained by writing
\[\begin{split}
K_{1,R}^1(t,r)&=\frac{t^{\alpha+\frac{1}{2}}}{\sqrt{\Delta(t)}}\frac{r^{\alpha+\frac{1}{2}}}{\sqrt{\Delta(r)}} \frac{1}{r^\alpha}\frac{1}{t^\alpha}
\biggl\{\int_1^R\frac{J_\alpha(\lambda r)J_\alpha(\lambda t)}{\lambda^{2\alpha}}\,d\nu(\lambda) +
\int_0^1\frac{J_\alpha(\lambda r)J_\alpha(\lambda t)}{\lambda^{2\alpha}}\,d\nu(\lambda)\biggr\}\\
&=M_{1,R}(t,r)+E_1(t,r),
\end{split}\]
where
\[\begin{split}
\vert E_1(t,r)\vert&\lesssim \Bigl|\frac{t^{\alpha+\frac{1}{2}}}{\sqrt{\Delta(t)}}\frac{r^{\alpha+\frac{1}{2}}}{\sqrt{\Delta(r)}} \frac{1}{r^\alpha}\frac{1}{t^\alpha}\Bigr|
\int_0^1 \frac{\vert\lambda r\vert^\alpha\vert\lambda t\vert^\alpha}{\lambda^{2\alpha}}\,d\lambda\lesssim 1
\end{split}\] The associated operator
${_e}S_{1,R}^1f(t)=1_{[0,R_0]}(t)\int_{A(t)}E_1(t,r)f(r)\Delta(r)\,dr$ is therefore easily estimated. It holds that
\[\vert{_e}S_{1,R}^1f(t)\vert\leq \|1_{[0,R_0]}E_1(t,\cdot)\|_{L^{p'}(d\mu)} \|1_{[0,R_0]}f\|_{L^p(d\mu)} \lesssim \|f\|_{L^p(d\mu)},\] where $1/p+1/p'=1$, $1<p<\infty$, so the associated maximal operator $t\mapsto \sup_{R>1}\vert{_e}S_{1,R}^1f(t)\vert$ is bounded on $L^p(d\mu)$ for $1<p<\infty$. The term $M_{1,R}(t,r)$ in our kernel decomposition $K_{1,R}^1(t,r)=M_{1,R}(t,r)+E_1(t,r)$ turns out to be fairly complicated, however.

Recall that
\[M_{1,R}(t,r)=\frac{t^{\alpha+\frac{1}{2}}}{\sqrt{\Delta(t)}}\frac{r^{\alpha+\frac{1}{2}}}{\sqrt{\Delta(r)}} \frac{1}{r^\alpha}\frac{1}{t^\alpha} \int_1^RJ_\alpha(\lambda r)J_\alpha(\lambda t)\lambda^{-2\alpha}\vert\cfct(\lambda)\vert^{-2}\,d\lambda.\]
We need a description of not only the leading term in the asymptotic behavior of $\vert\cfct(\lambda)\vert^{-2}$ as $\lambda\to\infty$, so we use Lemma \ref{lemma.precise-c} with $M=\llcorner 2\alpha+2\lrcorner$ (the integer part of $2\alpha+2$) to write $\vert\cfct(\lambda)\vert^{-2} =\lambda^{2\alpha+1}+c_1\lambda^{2\alpha}+c_2\lambda^{2\alpha-1}+\cdots+c_{M-1}\lambda^{2(\alpha+1)-\llcorner{2(\alpha+1)\lrcorner}} + O(\lambda^{2\alpha+1-\llcorner2(\alpha+1)\lrcorner})$. Note that $2\alpha+1<M<2(\alpha+1)$, since $2(\alpha+1)$ is not an integer. Correspondingly the asymptotic expansion of $\vert\cfct(\lambda)\vert^{-2}$ still takes the form $\vert\cfct(\lambda)\vert^{-2}=P(\lambda)+E(\lambda)$, but $P$ is not a polynomial anymore. At any rate
\begin{multline}\label{eqn.decompose-M.R1}
M_{1,R}(t,r)=\frac{t^{\alpha+\frac{1}{2}}}{\sqrt{\Delta(t)}}\frac{r^{\alpha+\frac{1}{2}}}{\sqrt{\Delta(r)}} \frac{1}{r^\alpha}\frac{1}{t^\alpha}\Biggl\{\int_1^RJ_\alpha(\lambda r)J_\alpha(\lambda t)\Bigl(\lambda+c_1+\frac{c_2}{\lambda}+\cdots+\frac{c_{M-1}}{\lambda^{M-2}}\Bigr)\,d\lambda \\
+ \int_1^RJ_\alpha(\lambda r)J_\alpha(\lambda t)E(\lambda)\,d\lambda\Biggr\}
\end{multline}

We thus need to consider separately a host of new operators, like
\[\begin{split}
M_{1,R}^d(t,r)&= \frac{t^{\alpha+\frac{1}{2}}}{\sqrt{\Delta(t)}}\frac{r^{\alpha+\frac{1}{2}}}{\sqrt{\Delta(r)}} \frac{1}{r^\alpha}\frac{1}{t^\alpha}\int_1^RJ_\alpha(\lambda r)J_\alpha(\lambda t)\lambda^d\,d\lambda,\quad d=2-M,\ldots,1\\
&=\frac{t^{\alpha+\frac{1}{2}}}{\sqrt{\Delta(t)}}\frac{r^{\alpha+\frac{1}{2}}}{\sqrt{\Delta(r)}} \frac{1}{r^\alpha}\frac{1}{t^\alpha}\biggl\{\int_0^R\cdots\lambda^d\,d\lambda - \int_0^1\cdots\lambda^d\,d\lambda\biggr\}\end{split}\]
where the latter piece $\cdots\int_0^1\cdots\lambda\,d\lambda$ -- at least for $d=1$ -- gives rise to an operator comparable with ${_e}S_{1,R}^1f$ considered above. We will keep the constants $c_k$, as they never influence the estimates. The conclusion, as before, is that the `error' term gives rise to a maximal operator that is bounded for the full range $p\in(1,\infty)$, hence uninteresting as far as the ongoing proof is concerned.

The first piece in the above-mentioned decomposition of $M_{1,R}^1$ gives rise to the operator
\[\begin{split}
S_{1,R}^{M_{1,R}^1}f(t) & = \int_{A(t)}M^1_{1,R}(t,r)f(r)\Delta(r)\,dr\\
&=\frac{t^{\alpha+\frac{1}{2}}}{\sqrt{\Delta(t)}}\frac{1}{t^\alpha}\int_0^{R_0}\biggl\{\int_0^R J_\alpha(\lambda r)J_\alpha(\lambda t)\lambda\,d\lambda\biggr\} f(r)\sqrt{\Delta(r)}r^{1/2}\,dr.
\end{split}\]
Since $0\leq r,t\leq R_0$, we can introduce the approximation $\sqrt{\Delta(r)}\thicksim r^{\alpha+\frac{1}{2}}$, $\sqrt{\Delta(t)}\thicksim t^{\alpha+\frac{1}{2}}$ in order to arrive at the favorable estimate
\[\begin{split}
\vert S_{1,R}^{M^1_{1,R}}f(t)\vert&\lesssim\frac{1}{t^\alpha}\Bigl| \int_0^{R_0}\biggl\{\int_0^R J_\alpha(\lambda r)J_\alpha(\lambda t)\lambda\,d\lambda\biggr\}f(r)r^{\alpha+1}\,dr\Bigr|\\
&\leq \frac{1}{t^\alpha}\Bigl| \int_0^{\infty}\biggl\{\int_0^R J_\alpha(\lambda r)J_\alpha(\lambda t)\lambda\,d\lambda\biggr\}f(r)r^{\alpha+1}\,dr \Bigr|\\
&=cT_Rf(t),
\end{split}\]
where $T_R$ is formally the  standard partial sums operator for Euclidean Fourier integrals, $f$ being viewed as a radial function on $\R^n$, with $n:=2(\alpha+1)$. More precisely, $T_R$ is indeed  the partial sums operator for the \emph{Hankel transform} $\mathcal{H_\alpha}$, and by  the maximal operator $t\mapsto \sup_{R>1}\vert S_{1,R}^{M_{1,R}^1}f(t)\vert$ is bounded on $L^p(\R_+,x^{2\alpha+1}dx)$ for $\frac{4\alpha+4}{2\alpha+3}<p<\frac{4\alpha+4}{2\alpha+1}$. -- this also explains the appearance of the Herz range. The result of Kanjin applies to $S_{1,R}^{M_{1,R}^1}$ as well, since we have localized its integral kernel in both arguments. We must still consider the remaining pieces $S_{1,R}^{M_{1,R}^0}f(t), S_{1,R}^{M_{1,R}^{-1}}f(t),\ldots$ and $S_{1,R}^{M_{1,R}^E}f(t)$.

Next on the list is the piece (of the kernel $M_{1,R}$)
\[M_{1,R}^0(t,r)=c\frac{t^{\alpha+\frac{1}{2}}}{\sqrt{\Delta(t)}}\frac{1}{t^\alpha} \frac{r^{\alpha+\frac{1}{2}}}{\sqrt{\Delta(r)}}\frac{1}{r^\alpha}\int_1^R J_\alpha(\lambda t)J_\alpha(\lambda r)\,d\lambda.\]
Recall that we DO allow the possibility that $r\geq t$ in this analysis. So let us assume that $r\geq t$ and choose a smooth partition of unity $1=g_{(1)}+g_{(2)}+g_{(3)}$ on $(0,\infty)$, indicated schematically by
\[[1,R]=\underset{(1)}{[1,1/r]}\cup\underset{(2)}{[1/r,1/t]}\cup\underset{(3)}{[1/t,R]}\]
where only the piece $g_{(3)}=:g_t$ will be important. Here $g_t$ is taken to be identically $1$ on $(1/t,R]$ and supported in $(1/r,R]$ (we will later choose $g_t$ more carefully). The corresponding expansion of $M_{1,R}^0$ shall be written
\begin{equation}\label{eqn.MR0}
\begin{aligned}
M_{1,R}^0(t,r) &= c\frac{t^{\alpha+\frac{1}{2}}}{\sqrt{\Delta(t)}}\frac{r^{\alpha+\frac{1}{2}}}{\sqrt{\Delta(r)}}\frac{1}{t^\alpha} \frac{1}{r^\alpha} \Biggl\{\int_1^{1/r}\cdots d\lambda + \int_{1/r}^{1/t}\cdots d\lambda+\int_{1/t}^R\cdots g_t(\lambda)\,d\lambda\Biggr\}\\
&=: M_{1,R}^{0,(1)}(t,r)+M_{1,R}^{0,(2)}(t,r)+M_{1,R}^{0,(3)}(t,r)
\end{aligned}\end{equation}

We first analyze the range $1\leq\lambda\leq 1/r$, corresponding to the function $M_{1,R}^{0,(1)}$. Here we estimate according to
\[\begin{split}
\vert M_{1,R}^{0,(1)}(t,r)\vert&\lesssim \frac{1}{t^\alpha r^\alpha}\int_1^{1/r}\vert J_\alpha(\lambda t)\vert\vert J_\alpha(\lambda r)\vert\, d\lambda\lesssim \frac{1}{t^\alpha r^\alpha}\int_1^{1/r}(\lambda t)^{-1/2}\,d\lambda \\ &=\frac{1}{t^{\alpha+\frac{1}{2}}r^\alpha}\bigl[\lambda^{1/2}\bigr]^{1/r}_1
=\frac{1}{t^{\alpha+\frac{1}{2}}r^\alpha}\Bigl(\frac{1}{r^{1/2}}-1\Bigr) \lesssim \frac{1}{t^{\alpha+\frac{1}{2}}r^{\alpha+\frac{1}{2}}},
\end{split}\]
where we have used that $1/r\geq 1$. It thus follows by the H\"older inequality (with $1/p+1/p'=1$) that

\[\begin{split}
\vert S_{1,R}^{M_{1,R}^{0,(1)}}f(t)\vert&\lesssim \frac{1}{t^{\alpha+\frac{1}{2}}}\int_0^{R_0}\frac{\vert f(r)\vert}{r^{\alpha+\frac{1}{2}}}\Delta(r)\,dr \\
&=\frac{1}{t^{\alpha+\frac{1}{2}}}\Bigl(\int_0^{R_0}\vert f(r)\vert^p\Delta(r)\,dr\Bigr)^{1/p}\Bigl(\int_0^{R_0}r^{-(\alpha+\frac{1}{2})p'}\Delta(r)\,dr\Bigr)^{1/p'}\\
&\lesssim\frac{1}{t^{\alpha+\frac{1}{2}}}\|f\|_{L^p(d\mu)} \Bigl(\int_0^{R_0}r^{2\alpha+1-(\alpha+\frac{1}{2})p'}\,dr\Bigr)^{1/p'}
\end{split}\]
which is finite precisely when $2\alpha+1-(\alpha+\frac{1}{2})p'>-1$, that is, when $\frac{4\alpha+4}{2\alpha+1}>p'$ or equivalently when $p>\frac{4\alpha+4}{2\alpha+3}$. The associated maximal operator $S_{1,*}^{M_{1,R}^{0,(1)}}$ therefore satisfies the estimate $\vert S_{1,*}^{M_{1,R}^{0,(1)}}f(t)\vert\lesssim t^{-(\alpha+\frac{1}{2})}\|_{L^p}$ for $p>\frac{4\alpha+4}{2\alpha+3}$, and $0\leq t\leq R_0$. It follows that
\[\|S_{1,*}^{M_{1,R}^{0,(1)}}f\|_{L^p}^p \lesssim \|f\|_{L^p}^p\int_0^{R_0}t^{-(\alpha+\frac{1}{2})p}\Delta(t)\,dt \lesssim \|f\|_{L^p}^p\int_0^{R_0}t^{-(\alpha+\frac{1}{2})p+2\alpha+1}\,dt,\]
which is finite precisely when $-(\alpha+\frac{1}{2})p+2\alpha+1>-1$, that is, when $p<\frac{4\alpha+4}{2\alpha+1}$. We have therefore established that $S_{1,*}^{M_{1,R}^{0,(1)}}$ is bounded on $L^p$ precisely when $\frac{4\alpha+4}{2\alpha+3}<p<\frac{4\alpha+4}{2\alpha+1}$.
\medskip

The range $1/r\leq\lambda\leq 1/t$, corresponding to the piece $M_{1,R}^{0,(2)}$, is just as easily handled: The standard estimates $\vert J_\mu(\lambda r)\vert\lesssim (\lambda r)^{-1/2}$ and $\vert J_\mu(\lambda t)\vert\lesssim 1$ imply that
\[\vert M_{1,R}^{0,(2)}(t,r)\vert\lesssim \frac{1}{t^\alpha r^\alpha}\int_{1/r}^{1/t}\frac{1}{\lambda^{1/2}}\,d\lambda\lesssim \frac{1}{t^{\alpha+\frac{1}{2}} r^{\alpha+\frac{1}{2}}}.\]
Prior analysis shows that the associated maximal operator $S_{1,*}^{M_{1,R}^{0,(2)}}$ is $L^p$-bounded for $\frac{4\alpha+4}{2\alpha+3}<p<\frac{4\alpha+4}{2\alpha+1}$
\medskip

It turns out to be more difficult to estimate the piece $M_{1,R}^{0,(3)}$, corresponding to the range $1/t\leq\lambda\leq R$. In order to get started we use the more precise Bessel function estimate
\[J_\alpha(t)=c\frac{\cos\bigl(t-\frac{\alpha\pi}{2}-\frac{\pi}{4}\bigr)}{t^{1/2}}+O(t^{-3/2})\]
from \cite[p.~199]{Watson} to write
\[\begin{split}
M_{1,R}^{0,(3)}(t,r) &= c\frac{t^{\alpha+\frac{1}{2}}}{\sqrt{\Delta(t)}}\frac{r^{\alpha+\frac{1}{2}}}{\sqrt{\Delta(r)}}\frac{1}{t^\alpha} \frac{1}{r^\alpha} \int_{1/t}^R\frac{\cos\bigl(\lambda t-\frac{2\alpha+1}{4}\pi\bigr)}{(\lambda t)^{1/2}}\frac{\cos\bigl(\lambda r-\frac{2\alpha+1}{4}\pi\bigr)}{(\lambda r)^{1/2}}g_t(\lambda)\,d\lambda + E\\
&= c\frac{t^{\alpha+\frac{1}{2}}}{\sqrt{\Delta(t)}}\frac{r^{\alpha+\frac{1}{2}}}{\sqrt{\Delta(r)}}\frac{1}{t^\alpha} \frac{1}{r^\alpha} \int_{1/t}^R\frac{\cos\bigl(\lambda t-\frac{2\alpha+1}{4}\pi\bigr)\cos\bigl(\lambda r-\frac{2\alpha+1}{4}\pi\bigr)}{\lambda}g_t(\lambda)\,d\lambda + E\\
&=c\frac{t^{\alpha+\frac{1}{2}}}{\sqrt{\Delta(t)}}\frac{r^{\alpha+\frac{1}{2}}}{\sqrt{\Delta(r)}}\frac{1}{t^\alpha} \frac{1}{r^\alpha}\sum_{\epsilon,\nu=\pm 1} \int_{1/t}^R\frac{e^{i\lambda\epsilon(t+\nu r)}}{\lambda}g_t(\lambda)\,d\lambda+E
\end{split}\]
where $E$ is an error term. Observe that
\begin{multline*}
\int_0^R\frac{e^{i\lambda\varepsilon(t+\nu r)}}{\lambda}g_t(\lambda)\,d\lambda = \biggl(\lambda\mapsto 1_{[0,R]}(\lambda)\frac{g_t(\lambda)}{\lambda}\biggr)^\wedge(\varepsilon(t+\nu r)) \\
=\Biggl(\!\!\biggl(\lambda\mapsto\frac{g_t(\lambda)}{\lambda}\biggr)^\wedge\star\biggl(x\mapsto\frac{e^{iRx}}{x}\biggr)\!\!\Biggr)(\varepsilon+\nu r),
\end{multline*} where $(\cdots)^\wedge$ designates the \emph{Euclidean} Fourier transform. Let $\sigma_t$ denote the Fourier transform of $\lambda\mapsto\frac{g_t(\lambda)}{\lambda}$ and set $\sigma_t(y)=\sup_t\vert\sigma_t(y)\vert$. Choosing $g_t$ as in \cite[Lemma~1]{Meaney-Prestini_invtrans}, it follows that $\sigma_*$ is Lebesgue integrable on $\R$, so that -- apart from the error term $E$ -- we can estimate $S_{1,*}^{M_{1,R}^{0,(3)}}f(t)$ as follows:

\[\begin{split}
\vert S_{1,*}^{M_{1,R}^{0,(3)}}f(t)\vert &\lesssim \frac{1}{t^{\alpha+\frac{1}{2}}}\sum_{\varepsilon,\nu=\pm 1}\sup_{t,R}\biggl|\int\Bigl(\sigma_t\star\Bigl(x\mapsto\frac{e^{iRx}}{x}\Bigr)\!\!\Bigr)(\varepsilon(t+\nu r))f(r)\sqrt{\Delta(r)}\,dr\biggr|\\
&=\frac{1}{t^{\alpha+\frac{1}{2}}}\sum_{\varepsilon,\nu=\pm 1}\sup_{t,R}\biggl|\int\sigma_t(\varepsilon(t+\nu r))\Bigl(\frac{e^{iRx}}{x}\star(f(x)\sqrt{\Delta(x)})\Bigr)(r)\,dr\biggr|\\
&\leq \frac{1}{t^{\alpha+\frac{1}{2}}}\sum_{\varepsilon,\nu=\pm 1}\int\sigma_*(\varepsilon(t+\nu r))\sup_{R>0}\Bigl|\Bigl(\frac{e^{iRx}}{x}\star(f(x)\sqrt{\Delta(x)})\Bigr)(r)\Bigr|\,dr
\end{split}\]
where we recognize the Carleson operator
\[C(f\sqrt{\Delta})(r)=\sup_{R>0}\Bigl(\frac{e^{iRx}}{x}\star(f(x)\sqrt{\Delta(x)})\Bigr)(r)\]
applied to the function $f\sqrt{\Delta}$. Since convolution with the $L^1$-function $\sigma_*$ is an $L^p$-bounded operation, it follows from the weighted estimates for the Carleson operator, developed in \cite{Prestini-summation} (see also \cite{Prestini-weighted}), that
\begin{equation}\label{eqn.Carleson-estimate}
\|S_{1,*}^{M_{1,R}^{0,(3)}}f\|_{L^p(d\mu)}\leq c\|f\|_{L^p(d\mu)}
\end{equation}
for $\frac{4\alpha+4}{2\alpha+3}<p<\frac{4\alpha+4}{2\alpha+1}$. As for the error term $E$ in our decomposition of $M_{1,R}^{0,(3)}$, we notice that
\[\vert E\vert\leq\frac{1}{r^\alpha}\frac{1}{t^\alpha}\frac{1}{r^{1/2}}\frac{1}{t^{3/2}}\int_{1/t}^R\frac{1}{\lambda^2}\,d\lambda \leq \frac{1}{r^{\alpha+\frac{1}{2}}t^{\alpha+\frac{1}{2}}}\]
since $\frac{1}{r}<\frac{1}{t}$ and $\frac{1}{R}<t$. It thus follows, as above, that the error term $E$ gives rise to a maximal operator that is $L^p$-bounded for the same range of $p$.

It could, however, also happen that $R<\frac{1}{r}$, in which case we do not decompose $[0,R]$ but rather estimate $M_{1,R}^0$ directly: It trivially holds that $\vert M_{1,R}^0(r,t)\vert\leq r^{-(\alpha+\frac{1}{2})}t^{-(\alpha+\frac{1}{2})}$, so once again we end up with a maximal operator that is $L^p$-bounded for the stated range of $p$

The last remaining case is $\frac{1}{r}<R<\frac{1}{t}$, where we use the decomposition $[0,R]=[0,1/r]\dot{\cup}[1/r,R]$ and carry out the exact same type of estimates as before. This completes our analysis of $M_{1,R}^0$.
\medskip

Now consider
\[M_{1,R}^{-1}(t,r)=\frac{t^{\alpha+\frac{1}{2}}}{\sqrt{\Delta(t)}}\frac{r^{\alpha+\frac{1}{2}}}{\sqrt{\Delta(r)}}
\frac{1}{t^\alpha}\frac{1}{r^{\alpha}} \int_1^R\frac{J_\alpha(\lambda t)J_\alpha(\lambda r)}{\lambda}\,d\lambda\]
and assume without of generality that $r\geq t$. Decompose the domain of integration smoothly as
\[[1,R]=\underset{(1)}{[1,1/r]}\cup\underset{(2)}{[1/r,1/t]}\cup\underset{(3)}{[1/t,R]}.\]
The ensuing pieces $M_{1,R}^{-1,(1)}$ and $M_{1,R}^{-1,(2)}$ are trivially estimated since they happen to resemble the error terms appearing in the analysis of $M_{1,R}^0$ above. The final piece $M_{1,R}^{-1,(3)}$ is also easily handled, since
\[\Bigl|\int_{1/t}^R\frac{J_\alpha(\lambda r)J_\alpha(\lambda t)}{\lambda}\,d\lambda\Bigr| \leq \frac{1}{r^{1/2}t^{1/2}}\int_{1/t}^R\frac{1}{\lambda^2}\,d\lambda\lesssim\frac{1}{r^{1/2}t^{1/2}}.\]
For the remaining terms $M_{1,R}^i$, $i=2-M,\ldots,-2$  and $M_{1,R}^E$ we employ the trivial estimate $\vert J_\mu(t)\vert\leq c$ for all $t$ to see that
\[\Bigl|M_{1,R}^{-2}(t,r)+\cdots+M_{1,R}^{2-M}(t,r)+M_{1,R}^E(t,r)\Bigr|\lesssim\frac{1}{t^{\alpha+\frac{1}{2}}}\frac{1}{r^{\alpha+\frac{1}{2}}},\]
hence the associated maximal operator  is $L^p$-bounded for $p\in(p_0,p_1)$. We have hereby finished the proof, for the operator $S_{1,R}^1$.
\bigskip

Regarding $S_{1,R}^2$, we first recall that the associated kernel is
\[K_{1,R}^2(t,r)=\frac{t^{\alpha+\frac{1}{2}}}{\sqrt{\Delta(t)}} \frac{r^{\alpha+\frac{1}{2}}}{\sqrt{\Delta(r)}} \frac{1}{t^\alpha}\frac{a_1(r)r^2}{r^\alpha} \int_0^R \frac{J_{\alpha+1}(\lambda r)J_\alpha(\lambda t)}{\lambda^{2\alpha+1}}\vert\cfct(\lambda)\vert^{-2}\,d\lambda.\]
We once more break up the domain of integration as $[0,R]=[0,1]\cup[1,R]$ and use the basic estimate $\vert J_\mu(t)\vert\leq ct^\mu$ to estimate in the range $\lambda\in[0,1]$ as follows:
\[\Bigl| \int_0^1 \frac{J_{\alpha+1}(\lambda r)J_\alpha(\lambda t)}{\lambda^{2\alpha+1}}\vert\cfct(\lambda)\vert^{-2}\,d\lambda\Bigr|  \lesssim \int_0^1\frac{(\lambda r)^{\alpha+1}(\lambda t)^\alpha}{\lambda^{2\alpha+1}}\,d\lambda\leq r^{\alpha+1}t^{\alpha}.\]
In the range $\lambda\in[1,R]$ we once again use the asymptotic expansion of $\vert\cfct(\lambda)\vert^{-2}$ from Lemma \ref{lemma.precise-c} to decompose the integral
\[\int_1^R \frac{J_{\alpha+1}(\lambda r)J_\alpha(\lambda t)}{\lambda^{2\alpha+1}}\vert\cfct(\lambda)\vert^{-2}\,d\lambda\]
further, giving rise to integrals of the sort encountered in of $M_{1,R}$.

By symmetry in $t$ and $r$, the same estimates hold for $K_{1,R}^3(t,r)$, so it remains to investigate the kernels $K_{1,R}^4$ and $K_{1,R}^5$ (the error term) together with the associated operators $S_{1,R}^4$ and $S_{1,R}^5$. To this end recall that

\[K_{1,R}^4(t,r)=\frac{t^{\alpha+\frac{1}{2}}}{\sqrt{\Delta(t)}}\frac{r^{\alpha+\frac{1}{2}}}{\sqrt{\Delta(r)}} \frac{r^2a_1(r)}{r^{\alpha+1}}\frac{t^2a_1(t)}{t^{\alpha+1}} \int_0^R\frac{J_{\alpha+1}(\lambda r)J_{\alpha+1}(\lambda t)}{\lambda^{2(\alpha+1)}}\vert\cfct(\lambda)\vert^{-2}\,d\lambda.\]
For $\lambda\in[0,1]$ in the domain of integration we may proceed as above, and for $\lambda\in[1,R]$ we once again use Lemma \ref{lemma.precise-c}, leading us to consider terms already analysed for $M_{1,R}^3$.

A favorable estimate for the maximal operator associated with the error term $K_{1,R}^5$ also follow from earlier considerations: If $r>t$, we decompose $[0,R]=[0,1]\cup[1,1/r]\cup[1/r,1/t]\cup[1/t,R]$ smoothly, thereby breaking $K_{1,R}^5$ into four pieces. Using that $\vert E_2(\lambda, t)\lesssim t^4$ for $\vert\lambda t\vert\leq 1$ and $\vert E_2(\lambda,t)\vert\lesssim t^2(\lambda t)^{2-(\alpha+\frac{1}{2})}$ for $\vert\lambda t\vert>1$, cf. Lemma \ref{lemma.asymptoticEXP}, it follows that $\vert K_{1,R}^5(t,r)\vert\lesssim r^{-(\alpha+\frac{1}{2})}t^{-(\alpha+\frac{1}{2})}$. The associated maximal operator $S_{1,*}^5$ is therefore $L^p$-bounded for the usual range $p\in(p_0,p_1)$.

By piecing together all the estimates in the present section we have hereby finally proved that $S_{1,*}$ is bounded on $L^p(\R_+,d\mu)$ if and only of $\frac{4\alpha+4}{2\alpha+3}<p<\frac{4\alpha+4}{2\alpha+1}$. We have also seen that the reason for this restricted range is purely Euclidean. At this stage in the analysis, the curved geometry of the underlying symmetric space isn't strong enough that non-Euclidean phenomena overpower the Euclidean structure.

\subsection{Investigation of $S_{2,*}$}
From now on the analysis of $S_R$ will involve the behavior of the Jacobi function $\varphi_\lambda(t)$ when $t$ tends to infinity, and Lemma \ref{lemma.asymptoticEXP} is not applicable in this region. As in the case of symmetric spaces, this investigation
requires sharp bounds on the $\cfct$-function, a close study of the
Harish-Chandra series for $\varphi_\lambda$, and an analogue of the Gangolli
estimates in the Jacobi setting. Recall that
$\varphi_\lambda(t)=\cfct(\lambda)e^{(i\lambda-\rho)t}\phi_\lambda(t)+\cfct(-\lambda)e^{(-i\lambda-\rho)t}\phi_{-\lambda}(t)$, where we now
formally expand $\phi_\lambda(t)$ as a power series (the
``Harish-Chandra series''),
\[\phi_\lambda(t)=\sum_{k=0}^\infty\Gamma_k(\lambda)e^{-2kt}.\] Since $\phi_\lambda$
is a solution to $\mathcal{L}_{\alpha,\beta}\varphi+(\lambda^2+\rho^2)\varphi=0$, the $\Gamma_k(\lambda)$ are
given recursively -- according to \cite[Formula~3.4]{Stanton-Tomas} -- by
$\Gamma_0(\lambda)\equiv 1$,
\begin{multline*}
(k+1)(k+1-i\lambda)\Gamma_{k+1}(\lambda) = (\alpha-\beta)\sum_{j=0}^k(\rho+2j-i\lambda)\Gamma_j(\lambda) \\
+  (\beta+\tfrac{1}{2})\sum_{j=1}^{[\frac{k+1}{2}]}(\rho+2(k+1-2j)-i\lambda)\Gamma_{k+1-2j}(\lambda),
\end{multline*}
where $[\frac{k+1}{2}]$ is the integer part of $\frac{k+1}{2}$. In fact,
$\Gamma_{k+1}=a_k\Gamma_k+\sum_{j=0}^{k-1}b_j^k\Gamma_j$, where (by \cite[Corollary~3.4]{Stanton-Tomas})
\[a_k=1+\frac{\alpha-\beta-1}{k+1}+
\frac{\alpha-\beta-1+\frac{1}{k+1}(\alpha(\alpha-1)-\beta(\beta-1)+1)}{k+1-i\lambda}
\]
and
\[b_j^k=(-1)^{k+j+1}\frac{2\beta+1}{k+1}\Bigl(1+\frac{\rho+2j-1}{k+1-i\lambda}\Bigr).\]

\begin{lem}[Gangolli estimates]
Let $D$ be either a compact subset of $\C\setminus (-i\N)$ or a set of the form
$D=\{\lambda=\xi+i\eta\in\C\,\vert\,\eta\geq -\varepsilon\vert\xi\vert\}$ for
some $\varepsilon\geq 0$. There exist positive constants $K,d$ such that
\begin{equation}\label{eqn.Gangolli.est}
\vert\Gamma_k(\lambda)\vert\leq K(1+k)^d \text{ for all }k\in\Z_+, \lambda\in
D.
\end{equation}
\end{lem}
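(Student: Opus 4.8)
The plan is to establish the Gangolli estimates $\vert\Gamma_k(\lambda)\vert\leq K(1+k)^d$ by induction on $k$, exploiting the recursion $\Gamma_{k+1}=a_k\Gamma_k+\sum_{j=0}^{k-1}b_j^k\Gamma_j$ together with the explicit bounds on $a_k$ and $b_j^k$ recorded just before the statement. The essential input is that, for $\lambda\in D$, the quantity $\vert k+1-i\lambda\vert$ is bounded below by a positive constant times $(1+k)$ (uniformly in $\lambda\in D$), since either $\lambda$ ranges over a compact set avoiding $-i\N$, or $\lambda=\xi+i\eta$ with $\eta\geq-\varepsilon\vert\xi\vert$, in which case $\Re(k+1-i\lambda)=k+1+\eta\geq k+1-\varepsilon\vert\xi\vert$ and $\Im(k+1-i\lambda)=-\xi$ conspire to keep $\vert k+1-i\lambda\vert\gtrsim 1+k$. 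This lower bound is what makes the denominators in $a_k$ and $b_j^k$ harmless.

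\smallskip
\noindent First I would record the uniform estimates $\vert a_k\vert\leq 1+\frac{C}{k+1}$ and $\vert b_j^k\vert\leq\frac{C}{k+1}$, valid for all $j\leq k-1$ and all $\lambda\in D$; both follow immediately from the displayed formulas once the lower bound $\vert k+1-i\lambda\vert\gtrsim 1+k$ is in hand, since every $\lambda$-dependent term is divided by $k+1-i\lambda$. Next I would set up the induction hypothesis $\vert\Gamma_j(\lambda)\vert\leq K(1+j)^d$ for all $j\leq k$ and feed it into the recursion. This yields
\[
\vert\Gamma_{k+1}\vert\leq\Bigl(1+\tfrac{C}{k+1}\Bigr)K(1+k)^d+\frac{C}{k+1}\sum_{j=0}^{k-1}K(1+j)^d.
\]
The sum $\sum_{j=0}^{k-1}(1+j)^d$ is comparable to $(1+k)^{d+1}/(d+1)$, so the second term is of order $K(1+k)^d\cdot\frac{C}{d+1}$. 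The first term, expanded, gives $K(1+k)^d+\frac{CK}{k+1}(1+k)^d$.

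\smallskip
\noindent The \textbf{main obstacle} is the bookkeeping required to absorb these correction terms into the target bound $K(1+k+1)^d=K(2+k)^d$. The point is to choose $d$ large enough that the gain $(2+k)^d-(1+k)^d\sim d(1+k)^{d-1}$ dominates the correction $\frac{C'}{k+1}(1+k)^d\sim C'(1+k)^{d-1}$; this forces $d\geq C'$ for a constant $C'$ determined by the bounds on $a_k$ and $b_j^k$ (roughly $C'=C+\frac{C}{d+1}$, so any $d$ exceeding this threshold works). Once $d$ is fixed, a single constant $K$ large enough to handle the finitely many initial values $\Gamma_0,\ldots,\Gamma_{k_0}$ closes the induction for all $k$. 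I expect the only delicate quantitative step to be verifying the lower bound $\vert k+1-i\lambda\vert\gtrsim 1+k$ uniformly on the unbounded set $D=\{\eta\geq-\varepsilon\vert\xi\vert\}$, where one must treat the regime $\vert\xi\vert$ large (controlled by the imaginary part $-\xi$) separately from $\vert\xi\vert$ bounded (controlled by the real part $k+1+\eta$); this is precisely the geometric condition that keeps $\lambda$ away from the poles $-i\N$ at which the recursion degenerates.
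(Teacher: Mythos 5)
Your preparatory steps are fine: the lower bound $\vert k+1-i\lambda\vert\geq c(1+k)$ uniformly on $D$ is correct (and proved by the right case analysis), and so are the resulting bounds $\vert a_k\vert\leq 1+\frac{C}{k+1}$ and $\vert b_j^k\vert\leq\frac{C}{k+1}$. The gap is in the induction step, exactly at the point you call ``bookkeeping''. By your own computation the $b$-sum contributes
\[
\frac{C}{k+1}\sum_{j=0}^{k-1}K(1+j)^d\thicksim\frac{CK}{d+1}\,(1+k)^d,
\]
which is a \emph{constant} multiple of $(1+k)^d$, carrying no factor $\frac{1}{k+1}$. In the absorption step you nevertheless fold it into a correction of the form $\frac{C'}{k+1}(1+k)^d\thicksim C'(1+k)^{d-1}$ with $C'=C+\frac{C}{d+1}$; that rewriting is false for large $k$. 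Since the available gain $K(2+k)^d-K(1+k)^d$ is only of order $Kd(1+k)^{d-1}$, the inequality you actually need, $1+\frac{C}{k+1}+\frac{C}{d+1}\leq\bigl(1+\frac{1}{k+1}\bigr)^d$, fails as soon as $k\gg d^2/C$, for every fixed $d$. So the induction breaks down at a finite stage depending on $d$, and enlarging $d$ or $K$ only postpones, never removes, the failure.

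Nor can this be repaired by finer bookkeeping within your framework: once the triangle inequality is applied to the $(a_k,b_j^k)$ recursion, all that remains is the system $M_{k+1}\leq\bigl(1+\frac{c}{k+1}\bigr)M_k+\frac{c}{k+1}\sum_{j<k}M_j$, and this system is consistent with growth of order $e^{2\sqrt{ck}}$: writing $S_k=\sum_{j\leq k}M_j$, the equality case reads $S_{k+1}-2S_k+S_{k-1}=\frac{c}{k+1}S_k$, a discretization of $S''=\frac{c}{x}S$, whose solutions are superpolynomial. Hence no choice of $K,d$ can close the argument; the alternating sign $(-1)^{k+j+1}$ in $b_j^k$, which the triangle inequality discards, would have to be exploited. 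The workable elementary route (essentially the argument behind \cite[Lemma~7]{FJ}, which is all the paper itself offers as proof) starts instead from the first displayed recursion $(k+1)(k+1-i\lambda)\Gamma_{k+1}=(\alpha-\beta)\sum_{j=0}^k(\rho+2j-i\lambda)\Gamma_j+\cdots$, in which \emph{every} coefficient, including that of $\Gamma_k$, is $O\bigl(\frac{1}{k+1}\bigr)$ uniformly on $D$; for this one needs the slightly stronger bound $\vert k+1-i\lambda\vert\geq c\max(k+1,\vert\lambda\vert)$, available on $D$ by your same case analysis, because the numerators $\rho+2j-i\lambda$ also contain $\lambda$. Then with $M_k=\sup_{\lambda\in D}\vert\Gamma_k(\lambda)\vert$ and $S_k=\sum_{j\leq k}M_j$ one gets $M_{k+1}\leq\frac{C}{k+1}S_k$, hence $S_{k+1}\leq\bigl(1+\frac{C}{k+1}\bigr)S_k$, and telescoping gives $S_k\leq S_0\prod_{j=1}^k\bigl(1+\frac{C}{j}\bigr)\leq e^C(1+k)^C$, which is \eqref{eqn.Gangolli.est} with $d=C$ and $K=e^C$. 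The essential mechanism is that passing to partial sums converts the $O(1/k)$ coefficients into the slowly divergent product $\prod_j(1+C/j)\thicksim k^C$ — precisely the polynomial growth that a direct induction on $\vert\Gamma_k\vert\leq K(1+k)^d$ cannot reproduce.
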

\begin{proof}
See \cite[Lemma~7]{FJ}.
\end{proof}

It follows that the expansion for $\phi_\lambda(t)$ converges uniformly on sets
of the form $\{(t,\lambda)\in[c,\infty)\times D\}$, where $c$ is a positive
constant. More precisely, if $\lambda\in D$, and $c>0$ is fixed, we see that
\[\forall t\geq c:\vert\phi_\lambda(t)\vert \leq\sum_{k=0}^\infty K(1+k)^de^{-2kt} \lesssim
\sum_{k=0}^\infty (1+k)^de^{-2ck}\lesssim 1,\] that is, $\phi_\lambda(t)$ is
bounded uniformly in $\lambda\in D$ for $t\geq c>0$. We will take $c=R_0$ in
later applications. Since $\lambda\mapsto\phi_\lambda(t)$ is analytic in a
strip containing the real axis, it follows as in the proof of
\cite[Lemma~7]{Meaney-Prestini_invtrans} that derivatives of $\phi_\lambda$ in $\lambda$
are bounded independently of $\lambda$ as well.

\begin{rem}\label{remark.2}
It is easy to prove that $\vert\frac{\partial^k}{\partial\lambda^k}\phi_\lambda(t)\vert\leq c_k$ for all $t\geq R_0$ and $\lambda\in[0,2\rho]$. This was done for symmetric spaces in \cite[Lemma~7]{Meaney-Prestini_invtrans}, whereas a more general statement in the context of Jacobi analysis was obtained in \cite[Lemma~4.1]{Johansen-nonint}.
\end{rem}

The asymptotic behavior of $\varphi_\lambda(t)$ as $t$ increases can now be
investigated. The result is formally the same as the analogues in \cite{Stanton-Tomas} and
\cite{Schindler}, and the proof will even work for complex parameters $\alpha,\beta$.

\begin{thm}\label{thm.bigt}
\begin{enumerate}
\romannum \item For every $M\geq 0$, $0\leq m\leq M$ and $\lambda\in\C$ with $\Im\lambda\geq 0$, there exist polynomials $f_{l_m}$ in $\lambda$ of degree $m$ such that
\[\Gamma_k(\lambda)=\sum_{m=0}^M\gamma_m^k+E_{M+1}^k,\]
where $\gamma_m^k$ is a sum of terms $1/f_{l_m}$, and where
\[\gamma_m^k(\lambda)\vert\leq A\frac{\vert\rho\vert^me^{2k}}{\vert\Re\lambda\vert^m},
\quad \vert D_{\Re\lambda}^a\gamma_m^k\vert\leq 2^aA\frac{\vert\rho\vert^m e^{2k}}{\vert\Re\lambda\vert^{m+a}}, \text{ and }
\vert E_{M+1}^k\vert\leq A\frac{\vert\rho\vert^{M+1}e^{2k}}{\vert\Re\lambda\vert^{M+1}};
\]
the constant $A$ is independent of $M$ and $\lambda$.

\item Let $\Lambda_m(\lambda,t)=\sum_{j=0}^\infty\gamma_m^{m+j}(\lambda)e^{-2jt}$. There exists a function $\mathcal{E}_{M+1}$ such that, for every $M\geq 0$ and $t\geq R_0$, it holds for $\lambda\in\C$ with $\Im\lambda\geq 0$ that
\[\phi_\lambda(t)=\sum_{m=0}^\infty\Lambda_m(\lambda,t)e^{-2mt}=\sum_{m=0}^M\Lambda_m(\lambda,t)e^{-2mt}+e^{-2(M+1)t}\mathcal{E}_{M+1}(\lambda,t),\]
where
\[\vert D_\lambda^a D_t^b\Lambda_m\vert\leq 2^{a+b}A\frac{\vert\rho\vert^me^{2m}}{\vert\Re\lambda\vert^{m+a}}G_b(t)\text{ and } \vert D_t^b\mathcal{E}_{M+1}\vert\leq 2^bA\frac{e^{2(M+1)}\vert\rho\vert^{M+1}}{\vert\Re\lambda\vert^{M+1}}G_b(t),\]
with $G_k(t):=\sum_{j=0}^\infty j^ke^{2k(1-t)}$.
\end{enumerate}
\end{thm}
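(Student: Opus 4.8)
The plan is to run the explicit recursion $\Gamma_{k+1}=a_k\Gamma_k+\sum_{j=0}^{k-1}b_j^k\Gamma_j$ recorded above, after splitting each coefficient into a part that is constant in $\lambda$ and a part carrying a single factor $(k+1-i\lambda)^{-1}$: thus $a_k=a_k^{(0)}+a_k^{(1)}$ with $a_k^{(0)}=1+\tfrac{\alpha-\beta-1}{k+1}$ and $a_k^{(1)}=\tfrac{c_k}{k+1-i\lambda}$ for a $\lambda$-free, $k$-bounded $c_k$, and $b_j^k=\tfrac{(-1)^{k+j+1}(2\beta+1)}{k+1}\bigl(1+\tfrac{\rho+2j-1}{k+1-i\lambda}\bigr)$, the two summands having ``order'' $0$ and $1$. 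The one elementary fact driving everything is that for $\Im\lambda\geq0$ one has both $|k+1-i\lambda|\geq|\Re\lambda|$ and $|k+1-i\lambda|\geq k+1$, so each order-$1$ factor is at once $O(|\Re\lambda|^{-1})$ -- which manufactures the decay in $\lambda$ -- and $O((k+1)^{-1})$ -- which tames the combinatorics of the long sum.

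For (i) I would define the graded pieces by the induced recursion $\gamma_m^{k+1}=a_k^{(0)}\gamma_m^k+a_k^{(1)}\gamma_{m-1}^k+\sum_{j=0}^{k-1}\bigl(b_j^{k,(0)}\gamma_m^j+b_j^{k,(1)}\gamma_{m-1}^j\bigr)$, with $\gamma_0^0=1$ and $\gamma_m^k=0$ for $m<0$ or $m>k$; since the order-$0$ parts preserve the grading and the order-$1$ parts raise it by one, summing over $m$ returns $\Gamma_k=\sum_m\gamma_m^k$. By construction $\gamma_m^k$ is a finite sum of reciprocals of products of exactly $m$ linear factors $(j-i\lambda)$ with $\lambda$-free coefficients, which is the asserted shape $\sum_l 1/f_{l_m}$, $\deg f_{l_m}=m$; the derivative bounds follow from this rational form, since $D_{\Re\lambda}(j-i\lambda)^{-1}=i(j-i\lambda)^{-2}$ spends one extra factor $|\Re\lambda|^{-1}$ and the Leibniz count over $m$ factors is swallowed by the constant $2^a$. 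The size estimate $|\gamma_m^k|\leq A|\rho|^m e^{2k}|\Re\lambda|^{-m}$ is then proved by induction on $k$: feeding the hypotheses into the four groups, the order-$0$ diagonal term reproduces the bound up to a factor $(1+O((k+1)^{-1}))$, whereas the order-$1$ diagonal term and the two sums each spend a factor $|\Re\lambda|^{-1}$ (lowering $m$ to $m-1$), with the factors $\rho+2j-1$ absorbed into $|\rho|^m$ and the geometric sums $\sum_{j<k}e^{2j}\lesssim e^{2k}$ -- the polynomial-in-$j$ weights being killed by the prefactor $(k+1)^{-1}$ -- reproducing the growth $e^{2k}$. One fixes $A=A(\alpha,\beta)$ once and for all, large enough to seed the induction and to dominate the finitely many constants so that the step $k\to k+1$ closes below $e^{2(k+1)}$; the remainder $E_{M+1}^k=\Gamma_k-\sum_{m=0}^M\gamma_m^k=\sum_{m>M}\gamma_m^k$ obeys the same bound at level $m=M+1$ (for $|\Re\lambda|$ large the geometric tail $\sum_{m>M}(|\rho|/|\Re\lambda|)^m$ is dominated by its first term), using that the decomposition is exact for $k\leq M$.

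Part (ii) is then a legitimate interchange of summation, geometric convergence being guaranteed by $t\geq R_0>1$. Writing $\phi_\lambda(t)=\sum_k\bigl(\sum_m\gamma_m^k\bigr)e^{-2kt}$ and using $\gamma_m^k=0$ for $k<m$, I reindex $k=m+j$ to get $\phi_\lambda(t)=\sum_m e^{-2mt}\sum_j\gamma_m^{m+j}(\lambda)e^{-2jt}=\sum_m\Lambda_m(\lambda,t)e^{-2mt}$, absolutely convergent because $|\gamma_m^{m+j}e^{-2jt}|\leq A|\rho|^me^{2m}|\Re\lambda|^{-m}e^{2j(1-t)}$. Differentiating term by term, the $t$-derivatives pull down $(2j)^b$ and build the weight $G_b(t)=\sum_j j^b e^{2j(1-t)}$, while the $\lambda$-derivatives are controlled by the part-(i) bound (the $\gamma_m^k$ being holomorphic, $|D_\lambda|$ and $|D_{\Re\lambda}|$ agree in size), yielding $|D_\lambda^aD_t^b\Lambda_m|\leq 2^{a+b}A|\rho|^me^{2m}|\Re\lambda|^{-m-a}G_b(t)$. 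Finally $\mathcal{E}_{M+1}(\lambda,t)=\sum_{j\geq0}E_{M+1}^{M+1+j}(\lambda)e^{-2jt}$ meets its bound through the $m=M+1$ remainder estimate and the identical geometric summation.

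The hard part is the inductive size estimate of (i): one must steer the recursion so that the gain of $|\Re\lambda|^{-1}$ per order, the absorption of the $\rho+2j-1$ into $|\rho|^m$, and the damping of the long sum $\sum_{j<k}$ by $(k+1)^{-1}$ all conspire to produce the single clean bound with a uniform constant $A$ and the exponential rate $e^{2k}$, sharper than the Gangolli rate $(1+k)^d$. Everything else, part (ii) included, is bookkeeping on absolutely convergent geometric series.
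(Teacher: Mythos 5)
Your proposal follows, in substance, exactly the route the paper relies on --- with the caveat that the paper's own ``proof'' of this theorem is a two-line citation: part (i) is deferred to \cite[Section~3]{Stanton-Tomas} (with the remark that the arguments must be redone for complex $\alpha,\beta$), and part (ii) to \cite[Lemma~6]{Meaney-Prestini_invtrans}. The skeleton you describe --- splitting $a_k$ and $b_j^k$ into a $\lambda$-free part and a part carrying one factor $(k+1-i\lambda)^{-1}$, the graded recursion for $\gamma_m^k$, the induction on $k$ driven by $\vert k+1-i\lambda\vert\geq\max(k+1,\vert\Re\lambda\vert)$, and the reindexing $k=m+j$ with termwise differentiation for part (ii) --- is precisely the Stanton--Tomas/Meaney--Prestini argument the paper invokes, so you have in effect supplied the details the paper outsources.

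There is, however, one step whose justification as written would fail, and it sits exactly at what you yourself call the hard part. You claim the derivative bounds follow because ``the Leibniz count over $m$ factors is swallowed by the constant $2^a$.'' A term of $\gamma_m^k$ is a constant times $\prod_{i=1}^m(k_i-i\lambda)^{-1}$, and applying $D_{\Re\lambda}^a$ to such a product produces, after collecting multi-indices, a combinatorial factor of size $m(m+1)\cdots(m+a-1)$; this is not uniform in $m$, so Leibniz alone cannot give the factor $2^a$ claimed in the statement. To obtain a constant independent of $m$ one must exploit structure that your construction provides but that you never invoke: in any single term the $m$ linear factors come from distinct generations of the recursion, so $k_1<k_2<\cdots<k_m$ and hence $k_i\geq i$, and the surplus powers must be paid for partly with $1/k_i\leq 1/i$ (or one argues via Cauchy's integral formula on a disc of radius comparable to $\vert\Re\lambda\vert$), together with the slack between the coefficient sums and $e^{2k}$ coming from the $1/(k+1)$ damping. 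The same caution applies to your remainder estimate, where your parenthetical silently assumes $\vert\Re\lambda\vert$ large although the statement is for all $\Im\lambda\geq 0$, and to the absorption of the numerators $\rho+2j-1$ and $c_k$ into $\vert\rho\vert^m$, which uses $\alpha-\beta\leq\rho$ and $2\beta+1\leq\rho$ (valid under the standing hypothesis $\alpha>\beta>-\tfrac{1}{2}$, but worth stating). These gaps are patchable within your framework, and for the paper's applications only $a,b\leq 2$ and $\lambda\geq\rho$ are ever needed; but the uniform-in-$m$ derivative bound, which is the crux of part (i), does not follow from what you wrote.
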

\begin{proof}
The algebraic properties of the Harish-Chandra series are investigated in \cite[Section~3]{Stanton-Tomas}, along with the estimates in part (i) of the theorem, and it is an arduous (yet elementary) matter to redo the proofs for complex parameters $\alpha,\beta$ instead. The improved statement in (ii) via the presence of the exponential factor in $e^{-2(M+1)t}\mathcal{E}_{M+1}(\lambda,t)$, was established in \cite[Lemma~6]{Meaney-Prestini_invtrans}, the proof of which may trivially be repeated.
\end{proof}
\begin{prop}\label{prop.U}
For $t\geq 1$ we consider the operator
\[Uf(t)=e^{-\rho t}\sup_{R>1}\Biggl| \int_1^\infty\frac{e^{iR(t-r)}}{t-r}f(r)\Delta(r)e^{-\rho r}\,dr\Biggr|.\]
Then $U$ maps $L^p(\R_+,d\mu)$ boundedly into $L^2(\R,d\mu)+L^p(\R,d\mu)$ for $1<p\leq 2$.
\end{prop}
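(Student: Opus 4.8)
The plan is to recognise the inner maximal integral as a Carleson--Hunt operator and to reduce the assertion to weighted estimates for it, the weight being an exponential that records the volume growth of $d\mu$. Extending $f$ by zero outside $[1,\infty)$ and setting $g=f\,\Delta\,e^{-\rho\cdot}$, one has
\[
Uf(t)=e^{-\rho t}\,\mathcal{C}g(t),\qquad
\mathcal{C}g(t)=\sup_{R>1}\Bigl|\,\mathrm{p.v.}\!\int_{\R}\frac{e^{iR(t-r)}}{t-r}\,g(r)\,dr\Bigr|,
\]
where $\mathcal{C}$ is the (modulated) Carleson operator, bounded on $L^q(\R,dr)$ for $1<q<\infty$ by the Carleson--Hunt theorem and on $L^q(w\,dr)$ for $w\in A_q$ by \cite{Prestini-weighted,Prestini-summation}. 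Using $\Delta(t)\asymp e^{2\rho t}$ for $t\geq1$, I would first record the three relevant identities (up to constants): $\|Uf\|_{L^2(d\mu)}^2\thicksim\int|\mathcal{C}g|^2\,dt$, $\|Uf\|_{L^p(d\mu)}^p\thicksim\int|\mathcal{C}g|^p\,w\,dt$, and $\|f\|_{L^p(d\mu)}^p\thicksim\int|g|^p\,w\,dr$, where $w(t)=e^{(2-p)\rho t}$, together with $\|g\|_{L^2(dr)}^2\thicksim\|f\|_{L^2(d\mu)}^2$. The proposition is thereby equivalent to a weighted boundedness statement for $\mathcal{C}$.

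For $p=2$ the weight trivialises, $w\equiv1$, and the Carleson--Hunt $L^2$ bound gives $\|Uf\|_{L^2(d\mu)}\thicksim\|\mathcal{C}g\|_{L^2(dt)}\lesssim\|g\|_{L^2(dr)}\thicksim\|f\|_{L^2(d\mu)}$. For $1<p<2$ the natural target would be the weighted inequality $\int|\mathcal{C}g|^p w\lesssim\int|g|^p w$, but this is \emph{false}: the exponentially increasing weight $w=e^{(2-p)\rho\cdot}$ is not an $A_p$ weight on $\R$. This failure is the analytic signature of the exponential volume growth of $d\mu$, and it is exactly why the target must be enlarged to $L^2(d\mu)+L^p(d\mu)$. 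Since $U$ is positively homogeneous and $L^2(d\mu)+L^p(d\mu)$ is a normed space, I may normalise $\|f\|_{L^p(d\mu)}=1$, i.e.\ $\int|g|^p w\,dr\thicksim1$, and then split at a fixed height: $g=g_1+g_2$ with $g_1=g\,1_{\{|g|\leq1\}}$ and $g_2=g\,1_{\{|g|>1\}}$. Because $p\leq2$ and $w\geq1$ on $[1,\infty)$, one has $\|g_1\|_{L^2(dr)}^2\leq\int|g_1|^p\,dr\leq\int|g|^p w\,dr\thicksim1$, so $\mathcal{C}g_1\in L^2(dt)$ and $e^{-\rho t}\mathcal{C}g_1$ lands in $L^2(d\mu)$ with the required bound. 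The piece $g_2$ satisfies $\|g_2\|_{L^1(dr)}\leq\int|g|^p w\,dr\thicksim1$ and $\|g_2\|_{L^p(dr)}\leq1$, but in general $g_2\notin L^2(dr)$; it is this piece that must be placed in $L^2(d\mu)+L^p(d\mu)$.

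To analyse the $g_2$ contribution I would conjugate $U$ by $\Delta^{1/p}$ to an unweighted $L^p(dr)$ problem; the resulting kernel is $\dfrac{e^{(iR+\delta)(t-r)}}{t-r}$ with $\delta=\rho(2/p-1)>0$, hence controlled by $e^{-\delta|t-r|}$ in the lower triangle $r>t$ and amplified by $e^{\delta(t-r)}$ in the upper triangle $r<t$. Splitting $\mathcal{C}g_2$ accordingly (legitimate by sublinearity), the lower-triangle part has an off-diagonal kernel that is absolutely integrable; separating $|t-r|<1$ (a truncated Carleson operator, $L^p$-bounded by Carleson--Hunt) from $|t-r|\geq1$ (an $L^1$ convolution, $L^p$-bounded by Young's inequality) shows that it maps into $L^p(d\mu)$.

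The upper-triangle part is the main obstacle: a localised bump in $g_2$ is amplified by the growing weight, which is precisely the non-$A_p$ phenomenon above and cannot be absorbed in $L^p(d\mu)$ alone. I would resolve it with the weighted Carleson estimates of Prestini \cite{Prestini-summation,Prestini-weighted}, which furnish $L^p$ control against the one-sided exponential weight modulo an $L^2(dt)$ defect; the $L^p$ component then enters $L^p(d\mu)$ and the defect enters $L^2(d\mu)$. Collecting the four contributions gives $Uf\in L^2(d\mu)+L^p(d\mu)$ with norm $\lesssim\|f\|_{L^p(d\mu)}$, as claimed. The hard step is thus the control of this upper-triangle, large-amplitude piece against the exponential weight, which is exactly where the curved geometry, as opposed to the flat Euclidean theory, makes its presence felt.
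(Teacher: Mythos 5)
Much of your reduction is sound: the identity $Uf=e^{-\rho t}\mathcal{C}g$ with $g=f\Delta e^{-\rho\cdot}$, the translation of all three norms into weighted integrals with $w=e^{(2-p)\rho t}$, the $p=2$ case, and the observation that $w\notin A_p$ forces the sum target are all correct, and your height splitting $g=g_1+g_2$ with the clean $L^2$ treatment of $g_1$ is a legitimate device. But the decisive step --- the upper-triangle contribution of $g_2$ --- is a genuine gap. After conjugating by $\Delta^{1/p}$ you face the kernel $e^{(iR+\delta)(t-r)}(t-r)^{-1}1_{\{r<t\}}$ with $\delta=\rho(2/p-1)>0$, and you ``resolve'' it by asserting that the weighted estimates of \cite{Prestini-summation,Prestini-weighted} ``furnish $L^p$ control against the one-sided exponential weight modulo an $L^2(dt)$ defect.'' No such statement exists in citable form: those papers prove weighted norm inequalities for the Carleson operator, not a decomposition of the output of an exponentially amplified one-sided operator into an $L^p$ part plus an $L^2$ part --- and that decomposition is essentially the proposition itself. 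Worse, in the conjugated variables the claim cannot even be formulated: testing on a bump at $r_0$, the upper-triangle operator produces something of size $e^{\delta(t-r_0)}(t-r_0)^{-1}$ for $t>r_0+1$, which lies in no $L^p(dt)$, so whatever ``$L^2$ defect'' means must be set up in the unconjugated variables, which you never do. Since this is exactly where you yourself locate the effect of exponential volume growth, the proposal as written does not prove the proposition.

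Ironically, you have the difficulty inverted: the far region is the \emph{easy} part. In unconjugated variables, for $|t-r|\geq 1$ one has $\sup_{R>1}\bigl|\int\frac{e^{iR(t-r)}}{t-r}g_2(r)\,dr\bigr|\leq(|g_2|\ast k)(t)$ with $k(x)=|x|^{-1}1_{\{|x|\geq 1\}}\in L^2(\R)$, so Young's inequality gives an $L^2(dt)$ bound by $\|g_2\|_{L^1}\lesssim 1$, and multiplication by $e^{-\rho t}$ places this piece in $L^2(d\mu)$ --- no weighted Carleson input needed. The genuinely Carleson-dependent region is the near-diagonal $|t-r|<1$, where your one-line ``truncated Carleson'' claim still needs block bookkeeping to exploit that the weight is only \emph{locally} constant. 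This is precisely how the paper proceeds: it partitions $[1,\infty)$ into unit intervals and writes $Uf(t)\leq\sum_k A_{k,*}f(t)+\sup_{R>1}|\sum_k B_{k,R}f(t)|$, where $A_{k,*}f=e^{-\rho t}\varphi_k(t)\,C(g_k)(t)$ with $g_k=g\,1_{[k,k+1)}$ is bounded using only the \emph{unweighted} Carleson--Hunt theorem (the step marked $(\sharp)$, with $w\equiv 1$), the exponential factors being comparable to constants on each block so that the pieces sum in $\ell^p$; the off-diagonal sum $\sum_k B_{k,*}$ is then bounded into $L^2(d\mu)$ by a restricted $(L^p,L^2)$ estimate on characteristic functions, using $\sum_k e^{\rho k}\vert E_k\vert\leq\sum_k e^{2\rho k}\vert E_k\vert=\int_1^\infty 1_E\Delta\,dr\leq\|1_E\|^p_{L^p(d\mu)}$ when $\|1_E\|_{L^p(d\mu)}<1$, followed by Marcinkiewicz interpolation. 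Your $g_1/g_2$ split could replace that restricted-type argument, and with the Young's-inequality repair above your skeleton can be completed; but as submitted, the proof stands or falls on the unproven upper-triangle claim, so it falls.
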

\begin{proof}
The proof is similar to the one for \cite[Theorem~3]{Meaney-Prestini_invtrans} and immediate for $p=2$. For $1<p<2$ fixed and $k\in\N$ write $\varphi_k(t)=1_{[-k,k]}(t)$, $\varphi_k(t)+\psi_k(t)\equiv 1$ for all $t>1$, and correspondingly
\[\begin{split}
e^{-\rho t}\int_1^\infty\frac{e^{iR(t-r)}}{t-r}f(r)\Delta(r)e^{-\rho r}\,dr &= e^{-\rho t}\sum_{k=1}^\infty\int_k^{k+1}\frac{e^{iR(t-r)}}{t-r}f(r)\Delta(r)e^{-\rho r}\,dr\\
&=e^{-\rho t}\sum_{k=1}^\infty(\varphi_k(t)+\psi_k(t))\int_k^{k+1}\frac{e^{iR(t-r)}}{t-r}f(r)\Delta(r)e^{-\rho r}\, dr\\
&=\sum_{k=1}^\infty(A_{k,R}f(t)+B_{k,R}f(t)),
\end{split}\]
where
\[
A_{k,R}f(t)=e^{-\rho t}\varphi_k(t)\int_k^{k+1}\frac{e^{iR(t-r)}}{t-r}f(r)\Delta(r)e^{-\rho r}\,dr\]
and
\[B_{k,R}f(t)=e^{-\rho t}\psi_k(t)\int_k^{k+1}\frac{e^{iR(t-r)}}{t-r}f(r)\Delta(r)e^{-\rho r}\,dr.
\]
Then $\displaystyle Uf(t)\leq\sum_{k=1}^\infty A_{k,*}f(t)+\sup_{R>1}\biggl|\sum_{h=1}^\infty B_{h,R}f(t)\biggr|$,
with
\[
A_{k,*}f(t)=e^{-\rho t}\varphi_k(t)\sup_{R>1}\biggl|\int_k^{k+1}\frac{e^{iR(t-r)}}{t-r}f(r)\Delta(r)e^{-\rho r}\,dr\biggr|
=e^{-\rho t}\varphi_k(t)C(g_k(\cdot))(t),\]
 $C(g_k)$ being the Carleson operator applied to the function $g_k(r)=f(r)\Delta(r)e^{-\rho r}1_{[k,k+1)}(r)$. For $p\in(1,\infty)$ it follows that
\[\begin{split}
\biggl\|\sum_{h=1}^\infty A_{k,*}f\biggr\|^p_{L^p(\R,d\mu)} &\leq \sum_{k=1}^\infty\|A_{k,*}f\|^p_{L^p} = \sum_{k=1}^\infty\bigl\| e^{-\rho t}\varphi_k(t)C(g_k)(t)\bigr\|^p_{L^p}\\
&=\sum_{k=1}^\infty\int_{k-1}^{k+2}e^{-\rho pt}\varphi_k(t)^p(C(g_k)(t))^p\Delta(t)\,dt\\
&\lesssim \sum_{k=1}^\infty e^{-\rho p(k-1)}\int_{k-1}^{k+2}(C(g_k)(t))^p\Delta(t)\,dt\\
(\sharp)&\lesssim\sum_{k=1}^\infty e^{\rho(k-1)(2-p)}\int_{k-1}^{k+2}\vert g_k(r)\vert^p\,dr \\ &=\sum_{k=1}^\infty e^{\rho(k-1)(2-p)}\int_1^\infty\vert g_k(r)\vert^p\,dr\\
&\lesssim\sum_{k=1}^\infty e^{\rho(k-1)(2-p)}\int_k^{k+1}\vert f(r)\vert^pe^{\rho pr}\,dr\\
&=\sum_{k=1}^\infty e^{\rho(k-1)(2-p)}\int_k^{k+1}\vert f(r)\vert^pe^{2\rho r}e^{\rho pr-2\rho r}\,dr
\\ &\thicksim c_p\sum_{k=1}^\infty\int_k^{k+1}\vert f(r)\vert^pe^{2\rho r}\,dr \thicksim c_p\|f\|^p_{L^p}.
\end{split}\]
Here $c_p=e^{\rho(p-2)}$ for $p<2$ and $c_p=e^{2\rho(p-2)}$ for $p\geq 2$.  The actual value of $c_p$ is immaterial of course, but it should be noted that it can become very large. At $(\sharp)$ we have used the classical weighted estimates for the Carleson operator corresponding to the weight $w\equiv 1$.

It now suffices to establish the estimate
\begin{equation}\label{Bk-estimate}
\biggl\|\sup_{R>1}\biggl|\sum_{k=1}^\infty B_{k,R}f(t)\biggr|\biggr\|_{L^2(\R,d\mu)}\leq c_p\|f\|_{L^p(\R,d\mu)}\text{ for } 1<p\leq 2.\end{equation}
For $p=2$ this follows from the easy estimate $\sup_{R>1}\bigl|\sum_{k=1}^\infty B_{k,R}f\bigr|\leq Uf+\sum_{k=1}^\infty A_{k,*}f$, since $U$ is trivially $L^2$-bounded. It remains to establish a restricted $(L^p,L^2)$-estimate for $1<p<2$ by the interpolation theorem of Marcinkiewicz. Fix a measurable subset $E\subset[1,\infty)$. If $\|1_E\|_{L^2(\R,d\mu)}\geq 1$, then $\|\sum_kB_{k,*}1_E\|_{L^2(\R,d\mu)}\lesssim\|1_E\|_{L^2(\R,d\mu)}\lesssim\|1_E\|_{L^p(\R,d\mu)}$ for $p\leq 2$.

On the other hand, if $\|1_E\|_{L^p(\R,d\mu)}<1$,
\[\vert B_{k,*}1_E(t)\vert\leq e^{-\rho t}\biggl(\int_k^{k+1}\frac{1}{\vert t-r\vert}1_E(r)\Delta(r)e^{-\rho r}\,dr\biggr)\psi_k(t)\leq \frac{e^{-\rho t}}{\vert t-h\vert}e^{\rho k}\vert E_k\vert\psi_k(t),\]
where $\vert E_k\vert$ denotes the $\mu$-measure of the set $E_k=E\cap[k,k+1)$. But then
\[\begin{split}
\biggl\|\sum_{k=1}^\infty B_{k,*}1_E\biggr\|_{L^2(\R,d\mu)}&\leq \sum_{k=1}^\infty\|B_{k,*}1_E\|_{L^2(\R,d\mu)}\leq c\sum_{k=1}^\infty e^{\rho k}\vert E_k\vert\\
&\leq c\sum_{k=1}^\infty e^{2\rho k}\vert E_k\vert =c\int_1^\infty 1_E(r)\Delta(r)\,dr\\
&\leq c\|1_E\|^p_{L^p(\R,d\mu)},
\end{split}\]
thus proving the restricted $(L^p,L^2)$-estimate in this case as well. This also finishes the proof of the Proposition.
\end{proof}

\begin{lem}\label{lemma.weighted-estimates}
Let $T$ be either the Carleson operator, the Hilbert transform on $\R$, or a convolution operator with Lebesgue-integrable kernel on $\R$. The maximal operator associated with the function $t\mapsto e^{-\rho t}T(f(r)\Delta(r)e^{-\rho r})(t)$ maps $L^p(\R_+,d\mu)$ boundedly into $L^p(\R,d\mu)+L^2(\R,d\mu)$ for $1<p\leq 2$.
\end{lem}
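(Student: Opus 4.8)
The plan is to run the proof of Proposition~\ref{prop.U} essentially verbatim, after isolating the two structural features of $T$ that the argument actually uses. Writing $h(r)=f(r)\Delta(r)e^{-\rho r}$, I would decompose $h=\sum_{k\geq 1}g_k$ with $g_k=h\cdot 1_{[k,k+1)}$ and split the target variable with the cutoffs $\varphi_k=1_{[-k,k]}$, $\psi_k=1-\varphi_k$, so that
\[
e^{-\rho t}\,T(h)(t)=\sum_{k=1}^\infty\bigl(A_kf(t)+B_kf(t)\bigr),\qquad A_kf=e^{-\rho t}\varphi_k\,T(g_k),\quad B_kf=e^{-\rho t}\psi_k\,T(g_k).
\]
The two features needed are: (i) $T$ is bounded on the \emph{unweighted} space $L^p(\R,dr)$ for $1<p<\infty$ --- this is the Carleson--Hunt theorem for the Carleson operator, the theorem of M.~Riesz for the Hilbert transform, and Young's inequality for an $L^1$-convolution; and (ii) away from the diagonal the integral kernel of $T$ is dominated by $c/|t-r|$, uniformly in the maximal parameter $R$ in the Carleson case. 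For the Hilbert transform and for the modulated kernel $e^{iR(t-r)}/(t-r)$ behind the Carleson operator, (ii) holds with equality in absolute value; for the convolution operator the role of (ii) is played by the integrability of the kernel off the diagonal. The case $p=2$ is immediate (as in Proposition~\ref{prop.U}), so the content is in $1<p<2$.

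For the near-diagonal piece $\sum_k A_kf$ I would use only~(i). Since $\|T(g_k)\|_{L^p(dr)}\lesssim\|g_k\|_{L^p(dr)}$, the weighted bookkeeping of Proposition~\ref{prop.U} (the chain of inequalities culminating in $(\sharp)$, with the Carleson operator replaced throughout by $T$) goes through unchanged and yields
\[
\Bigl\|\sum_{k=1}^\infty A_kf\Bigr\|_{L^p(\R,d\mu)}\lesssim\|f\|_{L^p(\R,d\mu)},\qquad 1<p\leq 2,
\]
which supplies the $L^p(\R,d\mu)$ component of the target space. Here the exponential factors $e^{-\rho t}$ and $\Delta(r)e^{-\rho r}\thicksim e^{\rho r}$ cancel precisely on the scale of the unit intervals $[k,k+1)$, which is exactly why the localization to those intervals is made.

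For the far piece $\sum_k B_kf$ I would use~(ii) to land in $L^2(\R,d\mu)$. Exactly as in Proposition~\ref{prop.U}, the off-diagonal bound on a test set $E$ (with $E_k=E\cap[k,k+1)$) gives $|B_k1_E(t)|\lesssim e^{-\rho t}\psi_k(t)\,|t-k|^{-1}e^{\rho k}\,\mathrm{Leb}(E_k)$, whence $\|B_k1_E\|_{L^2(d\mu)}\lesssim e^{\rho k}\,\mathrm{Leb}(E_k)$; summing in $k$ and comparing with $\mu(E)=\sum_k\mu(E_k)$ produces the restricted bound $\|\sum_k B_k1_E\|_{L^2(d\mu)}\lesssim\|1_E\|_{L^p(d\mu)}$, established first for $\|1_E\|_{L^2(d\mu)}\geq 1$ and then for $\|1_E\|_{L^p(d\mu)}<1$. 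The Marcinkiewicz interpolation theorem then upgrades this to the strong $(L^p,L^2)$ estimate $\|\sum_k B_kf\|_{L^2(\R,d\mu)}\lesssim\|f\|_{L^p(\R,d\mu)}$ for $1<p\leq 2$. Combining the two pieces gives the asserted mapping into $L^p(\R,d\mu)+L^2(\R,d\mu)$.

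The main obstacle will be precisely this far-part estimate. For the Carleson operator and the Hilbert transform the pointwise decay $|K(t,r)|\leq c/|t-r|$ is available uniformly in $R$, so the $B_k$-argument of Proposition~\ref{prop.U} transfers with no change. For a convolution operator one has no such pointwise control, and the per-interval bound on $\|B_k1_E\|_{L^2(d\mu)}$ must instead be extracted from the kernel together with the concentration of $g_k$ on $E_k$; here one cannot be cavalier, since $g_k$ may be concentrated on a very small subset of $[k,k+1)$, and one must verify that the decay of the kernel away from the diagonal --- rather than mere membership in $L^1$ --- guarantees both the summability in $k$ and the correct power of $\mu(E)$ that the interpolation requires.
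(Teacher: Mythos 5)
Your route is genuinely different from the paper's. The paper disposes of this lemma in two sentences: it observes that the corresponding statement in \cite{Meaney-Prestini_invtrans} was proved using Prestini's weighted estimates for the Carleson operator, and that her later paper \cite{Prestini-weighted} extends those estimates to weights of exponential type, i.e.\ exactly the Jacobi setting, so that ``there is nothing new to prove.'' You instead reconstruct a self-contained proof by rerunning Proposition~\ref{prop.U} with $T$ in place of the specific kernel $e^{iR(t-r)}/(t-r)$. For the Carleson operator and the Hilbert transform your two ingredients are the right ones: the unweighted $L^p(\R,dr)$ bound handles the near pieces $A_k$, since the factors $e^{-\rho t}$ and $\Delta(r)e^{-\rho r}\thicksim e^{\rho r}$ cancel up to a constant when $t$ and $r$ both lie within $O(1)$ of $[k,k+1)$, and the uniform majorant $c/\vert t-r\vert$ gives the restricted $(L^p,L^2)$ bound for the far pieces, which Marcinkiewicz then upgrades. (One small transcription point: you have copied the paper's typo $\varphi_k=1_{[-k,k]}$; the cutoff must localize $t$ to a neighborhood of $[k,k+1)$ --- the paper's own computation integrates over $[k-1,k+2]$ --- though your remark about cancellation ``on the scale of the unit intervals'' shows you have the intended localization in mind. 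Also, for the Carleson case the supremum in $R$ must stay outside the sum over $k$ in the far part, as in Proposition~\ref{prop.U}; your use of the $R$-uniform majorant handles this.) For these two operators your argument is correct, and it is in substance the Meaney--Prestini argument that the paper's citation points to.

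The convolution case, however, is a genuine gap, and your closing paragraph correctly locates it without closing it. For a bare $L^1$ kernel $\kappa$ the far-piece estimate fails as you have set it up: the only available tool is Young's inequality, which gives $\Vert B_k 1_E\Vert_{L^2(d\mu)}\lesssim e^{\rho k}\Vert\kappa\Vert_{L^1}\,\mathrm{Leb}(E_k)^{1/2}\thicksim \Vert\kappa\Vert_{L^1}\,\mu(E_k)^{1/2}$. The measure of $E_k$ enters with the power $\tfrac12$, whereas the paper's computation extracts the power $1$ from the pointwise bound $1/\vert t-r\vert$ (namely $\Vert B_{k}1_E\Vert_{L^2(d\mu)}\lesssim e^{\rho k}\mathrm{Leb}(E_k)\lesssim \mu(E_k)$, which sums to $\mu(E)\leq\Vert 1_E\Vert_{L^p(d\mu)}^p$). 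With the exponent $\tfrac12$ the sum $\sum_k\mu(E_k)^{1/2}$ need not converge even when $\mu(E)$ is small (take $\mu(E_k)=k^{-2}$), so ``integrability of the kernel off the diagonal'' is not a substitute for pointwise decay, and the step you defer really does break the $B_k$ scheme in the stated generality; whether the lemma itself survives for an arbitrary $L^1$ kernel would require a different argument altogether.

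The repair consistent with how the lemma is actually used in the paper is to strengthen hypothesis (ii): every convolution kernel that occurs in the $S_{2,*}$ analysis --- $\widehat{g_1}$ and the transforms $(\gamma_1^{1+j}g_2)^\wedge$ --- is the Fourier transform of a compactly supported function with enough smoothness to give $\vert\kappa(x)\vert\lesssim\min(1,x^{-2})$. With that envelope in place of $1/\vert t-r\vert$, your $B_k$ argument transfers verbatim: the envelope is bounded, square-integrable at infinity, and yields $\mathrm{Leb}(E_k)$ to the first power. So either restrict the convolution case to kernels with such pointwise decay (which suffices for all applications in the paper), or do what the paper does and invoke \cite{Prestini-weighted} directly.
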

\begin{proof}
The corresponding statement in \cite{Meaney-Prestini_invtrans} was established by using \cite{Prestini-summation}, hence cast in terms of radial Fourier analysis on $\R^n$. Prestini later generalized her weighted estimates to a setting that applies to Jacobi analysis, witness the paper \cite{Prestini-weighted}. Hence there is nothing new to prove.
\end{proof}

\begin{thm}
The maximal operator $S_{2,*}$ associated with the operator
\[S_{2,R}f(t)=\int_{R_0}^\infty K_{2,R}(t,r)f(r)\Delta(r)\,dr\]
is bounded from $L^p(\R_+,d\mu)$ into  $L^p(\R,d\mu)+L^2(\R,d\mu)$ for $1<p\leq 2$ (where it is implicitly understood that we are in the range $t\geq R_0$).
\end{thm}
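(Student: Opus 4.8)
The plan is to feed the large-$t$ asymptotics $\varphi_\lambda(t)=\cfct(\lambda)e^{(i\lambda-\rho)t}\phi_\lambda(t)+\cfct(-\lambda)e^{(-i\lambda-\rho)t}\phi_{-\lambda}(t)$ into the kernel $K_R(t,r)=\int_0^R\varphi_\lambda(t)\varphi_\lambda(r)\,d\nu(\lambda)$ and to organise the resulting pieces so that each is of the form handled by Proposition~\ref{prop.U} or by Lemma~\ref{lemma.weighted-estimates}. Multiplying out $\varphi_\lambda(t)\varphi_\lambda(r)$ produces four terms indexed by the signs $(\epsilon,\nu)\in\{+,-\}^2$ of the two exponents. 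After multiplying by $d\nu(\lambda)=\vert\cfct(\lambda)\vert^{-2}\,d\lambda$ and using $\cfct(\lambda)\cfct(-\lambda)=\vert\cfct(\lambda)\vert^2$ on the real axis, the two \emph{off-diagonal} terms $(\epsilon\neq\nu)$ acquire coefficient exactly $1$ and the phase $e^{\pm i\lambda(t-r)}$, while the two \emph{diagonal} terms $(\epsilon=\nu)$ acquire the unimodular coefficient $\cfct(\pm\lambda)/\cfct(\mp\lambda)$ and the phase $e^{\pm i\lambda(t+r)}$; in every case a factor $e^{-\rho(t+r)}$ is extracted to the front, which -- after pairing $e^{-\rho t}$ with the outer variable and $e^{-\rho r}\Delta(r)$ with the inner one -- is exactly the weight structure appearing in Proposition~\ref{prop.U} and Lemma~\ref{lemma.weighted-estimates}.

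I would first dispatch the off-diagonal terms, which carry the main contribution. Inserting the Harish--Chandra expansion of Theorem~\ref{thm.bigt} for the factor $\phi_{\pm\lambda}(t)\phi_{\mp\lambda}(r)$, the leading order ($\Lambda_0\equiv1$) yields precisely $\int_0^Re^{\pm i\lambda(t-r)}\,d\lambda=\tfrac{1}{\pm i(t-r)}(e^{\pm iR(t-r)}-1)$. The modulated part $e^{iR(t-r)}/(t-r)$ is the kernel of the operator $U$, so Proposition~\ref{prop.U} gives the $L^p(d\mu)\to L^2(d\mu)+L^p(d\mu)$ bound for $1<p\le2$; the unmodulated part $1/(t-r)$ is the Hilbert transform of $f\Delta e^{-\rho r}$ and is covered by the Hilbert case of Lemma~\ref{lemma.weighted-estimates}. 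The lower-order terms of the Harish--Chandra series carry additional decay $e^{-2mt}e^{-2m'r}$ with $m+m'\ge1$ and smooth amplitudes whose $\lambda$-derivatives are controlled by Theorem~\ref{thm.bigt} and by Lemma~\ref{lemma.precise-c}(iii); one integration by parts turns them into convolution operators with Lebesgue-integrable kernels in $t-r$, again absorbed by Lemma~\ref{lemma.weighted-estimates}. The discrepancy between the lower cut-off $R_0$ here and the cut-off $1$ in Proposition~\ref{prop.U} contributes only a compactly supported, uniformly bounded remainder and is harmless.

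The diagonal terms are where the real difficulty lies, and I expect them to be the main obstacle. Their phase $e^{\pm i\lambda(t+r)}$ does not degenerate as $r\to t$, and the naive control of the supremum over $R$ of $\int_0^R\tfrac{\cfct(\pm\lambda)}{\cfct(\mp\lambda)}e^{\pm i\lambda(t+r)}\,d\lambda$ by integration by parts loses a logarithm in $R$, since $\cfct'/\cfct=O(\lambda^{-1})$ is not integrable. The clean way around this is to exploit that $t,r\ge R_0$ keeps $t+r$ bounded away from $0$, and to reflect $r\mapsto-r$: the substitution turns the frequency-$(t+r)$ piece into a genuine Carleson/conjugate-Dirichlet maximal operator acting on the reflected function $(f\Delta e^{-\rho\,\cdot})(-\,\cdot)$, but evaluated only where the kernel sits away from its singularity ($\vert t-s\vert\ge2R_0$). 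Prestini's weighted Carleson estimates, in the form packaged in Lemma~\ref{lemma.weighted-estimates}, then apply without any logarithmic loss. Equivalently, one may estimate the diagonal kernel directly by $\vert K^{\mathrm{diag}}_R(t,r)\vert\lesssim e^{-\rho(t+r)}(t+r)^{-1}$ uniformly in $R$ and check that the resulting positive integral operator maps $L^p(d\mu)$ into $L^2(d\mu)$: for $1<p<2$ a crude Hölder estimate retaining the factor $(t+r)^{-1}\le t^{-1}$ suffices, because the surviving $\int_{R_0}^\infty t^{-2}\,dt$ is finite, while at $p=2$ the flattened kernel is the classical Hilbert kernel $(t+r)^{-1}$, bounded on $L^2$ by Hilbert's inequality. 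In either case the diagonal terms land in the $L^2(d\mu)$ component.

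Finally, since $\vert S_{2,*}f\vert$ is dominated by the sum of the maximal operators associated to the finitely many pieces above, combining the off-diagonal bound (into $L^2(d\mu)+L^p(d\mu)$) with the diagonal bound (into $L^2(d\mu)$) yields that $S_{2,*}$ maps $L^p(\R_+,d\mu)$ into $L^p(\R,d\mu)+L^2(\R,d\mu)$ for $1<p\le2$, as claimed.
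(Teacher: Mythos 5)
Your architecture is the paper's own: expand $\varphi_\lambda(t)\varphi_\lambda(r)\vert\cfct(\lambda)\vert^{-2}$ into the four sign-terms, observe that the cross terms carry coefficient $1$ and phase $e^{\pm i\lambda(t-r)}$ while the diagonal terms carry a unimodular $\cfct$-ratio and phase $e^{\pm i\lambda(t+r)}$, insert the Harish--Chandra expansion of Theorem \ref{thm.bigt}, send the leading off-diagonal contribution to Proposition \ref{prop.U} (modulated part) and to the Hilbert-transform case of Lemma \ref{lemma.weighted-estimates} (unmodulated part), and place the diagonal terms in $L^2(d\mu)$ via a kernel bound $e^{-\rho(t+r)}(t+r)^{-1}$ uniform in $R$. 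Your verification that this last kernel maps $L^p(d\mu)$ into $L^2(d\mu)$ --- H\"older for $1<p<2$, Hilbert's inequality after the substitution $g=fe^{\rho\cdot}$ at $p=2$ --- is correct and actually fills in a step the paper leaves implicit. But there is one genuine gap: you apply the expansion and all its quantitative bounds on the full range $\lambda\in[0,R]$, and every estimate you invoke degenerates at $\lambda=0$. Theorem \ref{thm.bigt} controls $\gamma_m^k$, $\Lambda_m$ and $\mathcal{E}_{M+1}$ only with factors $\vert\Re\lambda\vert^{-m-a}$ (so e.g. $\mathcal{E}_2(\lambda,\cdot)=O(\lambda^{-2})$ is not even locally integrable at $0$, and $\gamma_1^{1+j}(\lambda)=O(\lambda^{-1})$ does not have an $L^1$ Fourier transform without a cutoff), and the bounds $\cfct'/\cfct=O(\lambda^{-1})$, $\cfct''/\cfct=O(\lambda^{-2})$ of Lemma \ref{lemma.precise-c}(iii) likewise blow up there. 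The paper excises this region with a smooth partition $g_1+g_2\equiv 1$ splitting $[0,R]$ at $2\rho$, and handles the compact low-frequency piece $K_{2,R}^E$ separately, using that $\lambda\mapsto\phi_{\pm\lambda}(t)$ has $\lambda$-derivatives bounded uniformly for $\lambda\in[0,2\rho]$, $t\geq R_0$ (Remark \ref{remark.2}) together with unimodularity of the $\cfct$-ratios; this yields an integrable convolution kernel in $t-r$ and a $c/(t+r)$ bound for the $t+r$-phase. Your only remark about cutoffs concerns the harmless spatial discrepancy with Proposition \ref{prop.U}; the low-frequency cutoff in $\lambda$ is the one you cannot omit, and without it several of your stated steps literally fail.

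A secondary point on the diagonal terms: your claim that integration by parts ``loses a logarithm'' is true only for a \emph{single} integration by parts. With the amplitude $G_2(\lambda)=\frac{\cfct(\mp\lambda)^2}{\vert\cfct(\lambda)\vert^2}g_2(\lambda)\Lambda_0(\lambda,t)\Lambda_0(\lambda,r)$ one has $\vert G_2\vert\leq c$, $\vert G_2'\vert\leq c/\lambda$, $\vert G_2''\vert\leq c/\lambda^2$ on the support of $g_2$, so a \emph{second} integration by parts produces boundary terms of size $(t+r)^{-1}$ and $(t+r)^{-2}$ plus an integral bounded by $(t+r)^{-2}\int_\rho^R\lambda^{-2}\,d\lambda$; since $t+r\geq 2R_0$ this gives exactly the uniform bound $c/(t+r)$ --- which is the paper's argument, and also the only available justification for the kernel bound you assert ``equivalently'' but do not prove. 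The reflection-to-Carleson detour is therefore unnecessary (and, as sketched, it would still need the same derivative bounds and the same low-frequency cutoff), while the misdiagnosed obstacle dissolves once the double integration by parts is carried out.
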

The clever technique of proof -- originating in \cite{Meaney-Prestini_invtrans} -- is to bound $S_{2,*}$ by means of maximal operators associated with the Carleson operator and the Hilbert transform, using Proposition \ref{prop.U} and Lemma \ref{lemma.weighted-estimates}. This is a standard technique when working on spaces of homogeneous type, but for weighted measures where the volume of large balls grows exponentially more care is needed. The above technical results are designed to deal with this problem.

\begin{proof}
Adopting an earlier idea we decompose smoothly the domain of integration appearing in the definition of $K_{2,R}$ as $[0,R]=[0,2\rho]\cup[2\rho,R]$ by means of a partition of unity $g_1+g_2\equiv 1$ on $[0,\infty)$, where $g_2\in C_{\text{even}}^\infty(\R)$ is chosen such that $g_2(\lambda)\equiv 1$ for $\lambda>2\rho$, and $\text{supp }g_2\subset[\rho,\infty)$. The kernel $K_{2,R}$ thereby decomposes accordingly as $K_{2,R}=K_{2,R}^E+K_{2,R}^M$, where $K_{2,R}^E$ -- to be regarded as an error term -- is
\[\begin{split}
2K_{2,R}^E(t,r)&=\int_\R e^{(i\lambda-\rho)(r+t)}\phi_\lambda(t)\phi_\lambda(r)g_1(\lambda) \frac{\cfct(\lambda)^2}{\vert\cfct(\lambda)\vert^2}\,d\lambda\\
&\quad + \int_\R e^{i\lambda(t-r)-\rho(t+r)}\phi_\lambda(t)\phi_{-\lambda}(r)g_1(\lambda)\underbrace{\frac{\cfct(\lambda)\cfct(-\lambda)}{\vert\cfct(\lambda)\vert^2}}_{=1}\,d\lambda\\
&\quad + \int_\R e^{i\lambda(r-t)-\rho(t+r)}\phi_{-\lambda}(t)\phi_\lambda(r)g_1(\lambda)\frac{\cfct(-\lambda)\cfct(\lambda)}{\vert\cfct(\lambda)\vert^2}\,d\lambda\\
&\quad + \int_\R e^{(-i\lambda-\rho)(r+t)}\phi_{-\lambda}(t)\phi_{-\lambda}(r)g_1(\lambda)\frac{\cfct(-\lambda)^2}{\vert\cfct(\lambda)\vert^2}\,d\lambda
\end{split}\]
Since $\bigl|\frac{\cfct(\pm\lambda)^2}{\vert\cfct(\lambda)\vert^2}\bigr|=1$, all four terms are estimates in a similar manner, as follows: Let $\mathcal{F}_\lambda$ denotes the Euclidean Fourier transform in $\lambda$. Then
\[\biggl|\int_\R e^{i\lambda x}\phi_{\pm}\lambda(t)\phi_{\pm}\lambda(r)g_1(\lambda)\,d\lambda\biggr| =\biggl| \mathcal{F}_\lambda(\phi_{\pm\lambda}(t)\phi_{\pm\lambda}(r)g_1(\lambda))(x)\biggr|\leq\begin{cases} c&\text{if }\vert x\vert<1\\
1/x^2&\text{if }\vert x\vert>1\end{cases}\]
that is, the functions $\int_\R e^{i\lambda(t-r)}\phi_{\pm\lambda}(t)\phi_{\pm\lambda}(r)g_1(\lambda)\,d\lambda$ are Lebesgue integrable on $\R$ with respect to $t$ and $r$ separately, with $L^1$-norm independent of either $t$ or $r$. Additionally,
\[\Biggl|\int_\R e^{i\lambda(t+r)}\phi_{\pm\lambda}(t)\phi_{\pm\lambda}(r)\frac{\cfct(\mp\lambda)^2}{\vert\cfct(\lambda)\vert^2}g_1(\lambda)\,d\lambda\Biggr|\leq\frac{c}{t+r},\]
so that the maximal operator associated with $K_R^E$ is well behaved.

In order to avoid notational clutter we will now indicate how to proceed with estimates for integrands of the form $\phi_{\pm}(t)\phi_{\pm}(r)e^{i\lambda(t\pm r)}g_1(\lambda)\frac{\cfct(\mp\lambda)^2}{\vert\cfct(\lambda)\vert^2}$; here we allow all possible combinations of signs on $\lambda$ and $r$. Use the first few terms in the Harish-Chandra series expansion for $\phi_\lambda$ to write $\phi_\lambda(t)=\Lambda_0(\lambda,t)+\Lambda_1(\lambda,t)e^{-2t}+\mathcal{E}_2(\lambda,t)e^{-4t}$. Strictly speaking we would obtain $12$ terms in the expansion of $K_{2,R}^M$ upon inserting the Harish-Chandra series for $\phi_\pm(\lambda)$. By abuse of notation we simply write the decomposition of $K_{2,R}^M$ as $K_{2,R}^M=K_{2,R}^{M,0}+K_{2,R}^{M,1}+\mathrm{E}$, where $\mathrm{E}$ is whatever remains. More precisely
\begin{multline*}
K_{2,R}^{M,0}(t,r)=e^{-\rho(t+r)}\Biggl[\int_{-R}^Re^{i\lambda(t-r)}g_2(\lambda)\Lambda_0(\lambda,t)\Lambda_0(\lambda,r)\,d\lambda\\
+\int_{-R}^Re^{i\lambda(t+r)}g_2(\lambda)\Lambda_0(\lambda,t)\Lambda_0(\lambda,r)\frac{\cfct(\mp\lambda)^2}{\vert\cfct(\lambda)\vert^2}\,d\lambda+\text{ similar terms}\Biggr].
\end{multline*}
Since $\Lambda_0(\lambda,t)=1+\sum_{k=1}^\infty\gamma_0^k(\lambda)e^{-2kt}$, one has
\[\begin{split}
\int_{-R}^Re^{i\lambda(t-r)}g_2(\lambda)\,d\lambda &= -\int_\R e^{i\lambda(t-r)}g_1(\lambda)\,d\lambda + \int_{-R}^Re^{i\lambda(t-r)}\,d\lambda\\
&=-\widehat{g_1}(t-r)+\frac{e^{iR(t-r)}}{t-r}-\frac{e^{-iR(t-r)}}{t-r},
\end{split}\]
where $\widehat{g_1}$ is Lebesgue-integrable on $\R$. Proposition \ref{prop.U} is therefore applicable. As for the remaining terms in $\Lambda_0(\lambda,t)\Lambda_0(\lambda,r)$, it is used that $\gamma_0^k(\lambda)$ is in fact a constant, cf. \cite[page~262]{Stanton-Tomas}. Since all these terms decay exponentially fast, their associated maximal operators will be $L^s$-bounded in the full range $1<s<\infty$ and therefore uninteresting.

Moreover, by integration by parts,
\[\begin{split}
\int_{-R}^Re^{i\lambda(t+r)}G_2(\lambda)\,d\lambda & =2\int_\rho^Re^{i\lambda(t+r)}G_2(\lambda)\,d\lambda\\
&=\biggl[\frac{e^{i\lambda(t+r)}}{t+r}G_2(\lambda)\biggr]^R_\rho + \biggl[\frac{e^{i\lambda(t+r)}}{(t+r)^2}G_2'(\lambda)\biggr]_\rho^R \\
&\quad + \frac{1}{(t+r)^2}\int_\rho^R e^{i\lambda(t+r)}G_2''(\lambda)\,d\lambda
\end{split}\]
where $G_2(\lambda)=\frac{\cfct(\mp\lambda)^2}{\vert\cfct(\lambda)\vert^2}g_2(\lambda)\Lambda_0(\lambda,t)\Lambda_0(\lambda,r)$. For $\lambda\geq\rho$ it holds that $\vert G_2(\lambda)\vert\leq c$, $\vert G_2'(\lambda)\vert\leq c/\lambda$, and $\vert G_\lambda''\vert\leq c/\lambda^2$, meaning that
\[\biggl|\int_{-R}^Re^{i\lambda(t+r)}G_2(\lambda)\,d\lambda\biggr|\leq\frac{c}{t+r}.\]

The remaining piece of $K_{2,R}$ is slightly more troublesome, but since the $\gamma_0^k$ in the expansion $\Lambda_0(\lambda,t)=1+\sum_k\gamma_0^k(\lambda)e^{-2kt}$ are constants, we can simplify the investigation at hand by writing
\begin{multline*}
K_{2,R}^{M,1}(t,r)=e^{-\rho(t+r)}\biggl[\int_{-R}^Re^{i\lambda(t-r)}H_-(t,r,\lambda)\,d\lambda\\
+\int_{-R}^Re^{i\lambda(t+r)}H_+(t,r,\lambda)\frac{\cfct(\mp\lambda)^2}{\vert\cfct(\lambda)\vert^2}\,d\lambda+\text{ similar terms}\biggr],
\end{multline*}
with $H_\pm(t,r,\lambda)=\{\Lambda_1(\pm\lambda,r)e^{-2r}+\Lambda_1(\lambda,t)e^{-2t}+\Lambda_1(\pm\lambda,r)\Lambda_1(\lambda,t)e^{-2r}e^{-2t}\}g_2(\lambda)$.
The kernels associated with the indicated three pieces of, say, $H_-$, are all estimated in the same manner, so let us simply consider the first term; it gives rise to the kernel
\[\begin{split}
e^{-2r}\int_{-R}^Re^{i\lambda(t-r)}\Lambda_1(-\lambda,r)g_2(\lambda)\,d\lambda &= e^{-2r}\sum_{j=0}^\infty\int_{-R}^Re^{i\lambda(t-r)}\gamma_1^{1+j}(\lambda)g_2(\lambda)\,d\lambda \\
&= e^{-2r}\sum_{j=0}^\infty e^{-2jr}\left(\widehat{1_{[-R,R]}}* (\gamma_1^{1+j}g_2)^\wedge\right)(t-r),\end{split}\]
where we use the estimate $\vert\gamma_1^{1+j}(\lambda)\vert\leq c(e^{2(1+j)}/\lambda)$ for $\lambda\geq\rho$ to conclude that the Euclidean Fourier transform of $\gamma_1^{1+j}g_2$ is Lebesgue integrable on $\R$, whereas the Fourier transform of $1_{[-R,R]}$ is $\frac{e^{iRx}}{x}$. Again Proposition \ref{prop.U} applies.

The kernel associated with $H_+$ is slightly different, but the kernel associated with each of the three terms in $H_+$ satisfies
\[\biggl|e^{-2r}\int_{-R}^Re^{i\lambda(t+r)}\Lambda_1(\lambda,r)\frac{\cfct(\mp\lambda)^2}{\vert\cfct(\lambda)\vert^2}g_2(\lambda)\,d\lambda\biggr|\leq\frac{e^{-2r}}{t+r}.\]

The final remaining remaining piece $K_{2,R}^M-K_{2,R}^{M,0}-K_{2,R}^{M,1}$ is easily bounded by $\int_\rho^R\lambda^{-2}\,d\lambda\leq c$, valid for all $R>1$, thereby finally completing the proof.
\end{proof}

\subsection{Investigation of $S_{3,*}$}
Recall that we are concerned with the operator
\[S_{3,R}f(t)=\int K_{3,R}(t,r)f(r)\Delta(r)\,dr\]
in the region where $r>R_0$ and $t<\frac{R_0}{2}$.

\begin{lem}\label{lemma.S3.estimate}
For $r>R_0$, $t<\frac{R_0}{2}$, and $R>1$ it holds that
\[\vert K_R(t,r)\vert\lesssim\frac{e^{-\rho t}}{t^{\alpha+\frac{1}{2}}}\frac{1}{r}.\]
\end{lem}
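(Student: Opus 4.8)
The plan is to exploit the two complementary expansions at our disposal: the Bessel-type expansion of $\varphi_\lambda(t)$ for the small argument $t<R_0/2$ (Lemma~\ref{lemma.asymptoticEXP}) and the Harish--Chandra/$\cfct$-function decomposition of $\varphi_\lambda(r)$ for the large argument $r>R_0$. First I would substitute
\[\varphi_\lambda(r)\vert\cfct(\lambda)\vert^{-2}=\frac{e^{(i\lambda-\rho)r}\phi_\lambda(r)}{\cfct(-\lambda)}+\frac{e^{(-i\lambda-\rho)r}\phi_{-\lambda}(r)}{\cfct(\lambda)}\]
(using $\cfct(\lambda)\cfct(-\lambda)=\vert\cfct(\lambda)\vert^2$ on the real axis) into $K_R(t,r)=\int_0^R\varphi_\lambda(t)\varphi_\lambda(r)\vert\cfct(\lambda)\vert^{-2}\,d\lambda$. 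The two resulting integrals are complex conjugates, so it is enough to bound $e^{-\rho r}\int_0^R\varphi_\lambda(t)\cfct(-\lambda)^{-1}\phi_\lambda(r)e^{i\lambda r}\,d\lambda$. Since $0<t<R_0/2<1$ one has $\tfrac{t^{\alpha+1/2}}{\sqrt{\Delta(t)}}\asymp 1\asymp e^{-\rho t}$ and $t^{-(\alpha+1/2)}\gtrsim 1$, while $e^{-\rho r}\leq 1$; hence it suffices to prove that this $\lambda$-integral is $\lesssim t^{-(\alpha+1/2)}r^{-1}$.

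Next I would insert the leading Bessel term $\varphi_\lambda(t)\approx c\,\tfrac{t^{\alpha+1/2}}{\sqrt{\Delta(t)}}\mathcal{J}_\alpha(\lambda t)$ and split the $\lambda$-range at the natural transition $\lambda=1/t$ of the Bessel argument, writing $[0,R]=[0,1]\cup[1,1/t]\cup[1/t,R]$ (the last interval being empty when $R\leq 1/t$). On $[0,1]$ all factors are bounded and smooth in $\lambda$ -- note $\cfct(-\lambda)^{-1}$ vanishes linearly at $0$ -- so a single integration by parts against $e^{i\lambda r}$ gives a contribution $\lesssim r^{-1}$. On $[1,1/t]$ the argument $\lambda t$ stays $\leq 1$, so $\mathcal{J}_\alpha(\lambda t)\asymp 1$ while $\vert\cfct(-\lambda)^{-1}\vert\lesssim\lambda^{\alpha+1/2}$ by \eqref{eqn.c-fct.est}; the amplitude grows polynomially but its derivative is controlled (using $\cfct'/\cfct=O(\lambda^{-1})$ from Lemma~\ref{lemma.precise-c}), so one integration by parts yields a bound dominated by the endpoint $\lambda=1/t$, namely $\lesssim t^{-(\alpha+1/2)}r^{-1}$.

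The genuinely delicate interval is $[1/t,R]$. Here I would use the sharp Bessel asymptotic $\mathcal{J}_\alpha(\lambda t)\sim c(\lambda t)^{-\alpha-1/2}\cos(\lambda t-\tfrac{(2\alpha+1)\pi}{4})$, so that after combining the cosine with $e^{i\lambda r}$ the integrand carries the two phases $e^{i\lambda(r\pm t)}$, both of frequency $r\pm t\asymp r$ (since $r-t>R_0/2$), and an amplitude $A(\lambda)=\cfct(-\lambda)^{-1}\lambda^{-(\alpha+1/2)}\phi_\lambda(r)/\sqrt{\Delta(t)}$ of size $\asymp t^{-(\alpha+1/2)}$ that is essentially constant in $\lambda$. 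A single integration by parts produces a boundary term $\lesssim t^{-(\alpha+1/2)}r^{-1}$ together with a remainder $\tfrac1r\int_{1/t}^R A'(\lambda)e^{i\lambda(r\pm t)}\,d\lambda$ whose modulus, estimated trivially, is only $O(t^{-(\alpha+1/2)}r^{-1}\log(Rt))$. This logarithm is the main obstacle, and removing it is the crux of the proof. The remedy is a second integration by parts: the derivative bounds $\cfct'/\cfct=O(\lambda^{-1})$ and $\cfct''/\cfct=O(\lambda^{-2})$ from Lemma~\ref{lemma.precise-c}, together with the fact that $\Lambda_0(\lambda,r)$ is independent of $\lambda$ so that $\partial_\lambda\phi_\lambda(r)$ is exponentially small (Theorem~\ref{thm.bigt} and Remark~\ref{remark.2}), give $A'(\lambda)=O(t^{-(\alpha+1/2)}\lambda^{-1})$ and $A''(\lambda)=O(t^{-(\alpha+1/2)}\lambda^{-2})$; the second integration by parts then bounds the remainder by $O(t^{-(\alpha+1/2)}\,t\,r^{-2})$, which is dominated by $t^{-(\alpha+1/2)}r^{-1}$. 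Thus $[1/t,R]$ also contributes $\lesssim t^{-(\alpha+1/2)}r^{-1}$.

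Finally I would check that the remaining ingredients are harmless: the subleading Bessel term $t^2a_1(1)\mathcal{J}_{\alpha+1}(\lambda t)$, the Watson remainder $O((\lambda t)^{-\alpha-3/2})$, and the error $E_2(\lambda,t)$ from Lemma~\ref{lemma.asymptoticEXP} all produce amplitudes that decay in $\lambda$ (by an extra power of $\lambda^{-1}$ or better), so for them even a single integration by parts, or the elementary estimate $\bigl|\int_{1/t}^R\lambda^{-1}e^{i\lambda s}\,d\lambda\bigr|\lesssim t/\vert s\vert$, already yields contributions $\lesssim t^{-(\alpha+1/2)}r^{-1}$; the second $\cfct$-term is handled identically by conjugation. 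Assembling the three intervals and the error terms gives $\vert K_R(t,r)\vert\lesssim e^{-\rho r}t^{-(\alpha+1/2)}r^{-1}\lesssim e^{-\rho t}t^{-(\alpha+1/2)}r^{-1}$, as claimed. I expect the bookkeeping of the derivative estimates feeding the double integration by parts -- ensuring that no logarithm survives -- to be the only real difficulty; everything else is routine once the phases $e^{i\lambda(r\pm t)}$ have been isolated.
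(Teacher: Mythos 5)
Your proposal is correct and follows essentially the same route as the paper's own proof: the same three-range splitting of $[0,R]$ at a fixed constant and at $1/t$, the same substitution of $\varphi_\lambda(r)\vert\cfct(\lambda)\vert^{-2}$ by the $\cfct$-function/Harish--Chandra decomposition, single integration by parts on the first two ranges (with the boundary term at $\lambda=1/t$ producing the factor $t^{-(\alpha+\frac{1}{2})}$), and Watson's asymptotics plus a \emph{double} integration by parts on $[1/t,R]$ using the derivative bounds $\cfct'/\cfct=O(\lambda^{-1})$, $\cfct''/\cfct=O(\lambda^{-2})$ of Lemma \ref{lemma.precise-c} and the $\lambda$-derivative bounds on $\phi_\lambda(r)$, exactly as in the paper. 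The only deviations are cosmetic -- the paper breaks at $2\rho$ rather than $1$, keeps $\varphi_\lambda(t)$ unexpanded on the middle range, and estimates the two $\cfct$-terms symmetrically rather than invoking complex conjugation (legitimate here since $\alpha,\beta,\lambda$ are real) -- and your treatment of the expansion error terms via oscillation is, if anything, more explicit than the paper's.
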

We shall prove the lemma in moment but first we observe that it leads to the desired bound on the relevant maximal function $S_{3,*}$. Indeed, by Lemma \ref{lemma.S3.estimate},
\[\vert S_{3,R}f(t)\vert\lesssim\frac{1}{t^{\alpha+\frac{1}{2}}}\int_{R_0}^\infty\vert f(r)\vert\frac{e^{\rho r}}{r}\,dr \lesssim \frac{\|f\|_{L^p(d\mu)}}{t^{\alpha+\frac{1}{2}}} \int_{R_0}^\infty \frac{e^{\rho r(1-\frac{2}{p})p'}}{r^{p'}}\,dr \leq \frac{c_p}{t^{\alpha+\frac{1}{2}}}\|f\|_{L^p(d\mu)}\]
for $\frac{1}{p}+\frac{1}{p'}=1$, $1<p\leq 2$. By typical arguments we conclude that $\|S_{3,*}f\|_{L^p\R,(d\mu)}\leq c_p\|f\|_{L^p(\R_+,d\mu)}$ for $1<p\leq 2$.

\begin{proof}[Proof of Lemma \ref{lemma.S3.estimate}]
Observing that $\frac{1}{\vert t-r\vert}\lesssim\frac{1}{r}$ and $\frac{1}{t+r}\lesssim\frac{1}{r}$, we decompose the kernel $K_{3,R}(t,s)$ as a sum $K_{3,R}=K_{3,R}^{(1)}+K_{3,R}^{(2)}+K_{3,R}^{(3)}$ where we have put
\[\begin{split}
K_{3,R}^{(1)}(t,r)&=\int_0^{2\rho}\varphi_\lambda(t)\varphi_\lambda(r)\vert\cfct(\lambda)\vert^{-2}\,d\lambda,\\
K_{3,R}^{(2)}(t,r)&=\int_{2\rho}^{1/t}\varphi_\lambda(t)\varphi_\lambda(r)\vert\cfct(\lambda)\vert^{-2}\,d\lambda, \text{ and}\\
K_{3,R}^{(3)}(t,r)&=\int_{1/t}^R\varphi_\lambda(t)\varphi_\lambda(r)\vert\cfct(\lambda)\vert^{-2}\,d\lambda.
\end{split}\]

Here
\[\begin{split}
K_{3,R}^{(1)}(t,r)&=\int_0^{2\rho}\varphi_\lambda(t)\bigl(\cfct(\lambda)e^{(i\lambda-\rho)r}\phi_\lambda(r)+\cfct(-\lambda)e^{(-i\lambda-\rho)r}\phi_{-\lambda}(r)\bigr)\vert\cfct(\lambda)\vert^{-2}\,d\lambda\\
&=e^{-\rho r}\Bigl[\frac{1}{ir}e^{i\lambda r}\frac{\varphi_\lambda(t)\phi_\lambda(r)}{\cfct(-\lambda)}\Bigr]_0^{2\rho}-\frac{1}{ir}e^{-\rho r}\int_0^{2\rho}e^{i\lambda r}\frac{d}{d\lambda}\Bigl(\frac{\varphi_\lambda(t)\phi_\lambda(r)}{\cfct(-\lambda)}\Bigr)\,d\lambda\\
&\quad + e^{-\rho r}\Bigl[-\frac{1}{ir}e^{-i\lambda r}\frac{\varphi_\lambda(t)\phi_{-\lambda}(r)}{\cfct(\lambda)}\Bigr]_0^{2\rho} + \frac{1}{ir}e^{-\rho r}\int_0^{2\rho}e^{-i\lambda r}\frac{d}{d\lambda}\Bigl(\frac{\varphi_\lambda(t)\phi_{-\lambda}(r)}{\cfct(\lambda)}\Bigr)\,d\lambda
\end{split}\]
by integration by parts. By Lemma \ref{lemma.precise-c} and Remark \ref{remark.2}, the derivatives $\frac{d}{d\lambda}\bigl(\frac{\varphi_\lambda(t)\phi_\lambda(r)}{\cfct(-\lambda)}\bigr)$ and $\frac{d}{d\lambda}\bigl(\frac{\varphi_\lambda(t)\phi_{-\lambda}(r)}{\cfct(\lambda)}\bigr)$ are bounded for $0\leq \lambda\leq 2\rho$, so $\vert K_{3,R}^{(1)}(t,r)\vert\lesssim\frac{e^{-\rho r}}{r}$. Note that this estimate is stronger than what was stated in Lemma \ref{lemma.S3.estimate}, since for small $t$ the factor $\frac{1}{t^{\alpha+\frac{1}{2}}}$ would become large, implying a very poor kernel estimate.

Integration by parts, now for $K_{3,R}^{(2)}$, shows that
\[\begin{split}
K_{3,R}^{(2)}(t,r)&=e^{-\rho r}\Bigl[\frac{1}{ir}e^{i\lambda r}\frac{\varphi_\lambda(t)\phi_\lambda(r)}{\cfct(-\lambda)}\Bigr]_{2\rho}^{1/t}-\frac{1}{ir}e^{-\rho r}\int_{2\rho}^{1/t}e^{i\lambda r}\frac{d}{d\lambda}\Bigl(\frac{\varphi_\lambda(t)\phi_\lambda(r)}{\cfct(-\lambda)}\Bigr)\,d\lambda\\
&\quad + e^{-\rho r}\Bigl[-\frac{1}{ir}e^{-i\lambda r}\frac{\varphi_\lambda(t)\phi_{-\lambda}(r)}{\cfct(\lambda)}\Bigr]_{2\rho}^{1/t} + \frac{1}{ir}e^{-\rho r}\int_{2\rho}^{1/t}e^{-i\lambda r}\frac{d}{d\lambda}\Bigl(\frac{\varphi_\lambda(t)\phi_{-\lambda}(r)}{\cfct(\lambda)}\Bigr)\,d\lambda
\end{split}\]
where we utilize the estimates $\vert\varphi_\lambda(t)\vert\leq c$, $\vert\phi_{\pm\lambda}(r)\vert\leq c$, $\vert 1/\cfct(\pm\lambda)\vert\leq c\lambda^{\alpha+\frac{1}{2}}$, $\vert\varphi_\lambda'(t)\vert\leq c/\lambda$, $\vert\phi_{\pm\lambda}'(r)\vert\leq ce^{-2r}/\lambda$, and $\vert\frac{d}{d\lambda}(\varphi_\lambda(t)\phi_{\pm\lambda}(r)/\cfct(\pm\lambda))\vert\leq c\lambda^{\alpha-\frac{1}{2}}$ to conclude that $\vert K_R^{(2)}(t,r)\vert\leq c\frac{e^{-\rho r}}{t^{\alpha+\frac{1}{2}}}\frac{1}{r}$.

It remains to study $K_{3,R}^{(3)}$ but since $\lambda$ is allowed to become either very large (when $R$ is large) or small (less than one, at least), here we have to be slightly more careful in the estimates, especially since the proof in \cite{Meaney-Prestini_invtrans} leaves out most terms in the calculation (similar to what happened in the analysis of $K_{2,R}$). First write $\varphi_\lambda(t)$ as \[\varphi_\lambda(t)=\frac{t^{\alpha+\frac{1}{2}}}{\sqrt{\Delta(t)}}\biggl[\frac{J_{\alpha}(\lambda t)}{(\lambda t)^\alpha}+E_1(\lambda,t)\biggr],\]
so that
\[\begin{split}
K_{3,R}^{(3)}(t,r)&=e^{-\rho r}\frac{t^{1/2}}{\sqrt{\Delta(t)}}\int_{1/t}^R\frac{J_\alpha(\lambda t)}{\lambda^\alpha}\phi_\lambda(r)e^{i\lambda r}\frac{d\lambda}{\cfct(-\lambda)} \\ & + e^{-\rho r}\frac{t^{1/2}}{\sqrt{\Delta(t)}}\int_{1/t}^R\frac{J_\alpha(\lambda t)}{\lambda^\alpha}\phi_{-\lambda}(r)e^{-i\lambda r}\frac{d\lambda}{\cfct(\lambda)}\\
&+ e^{-\rho r}\frac{t^{\alpha+1/2}}{\sqrt{\Delta(t)}}\int_{1/t}^RE_1(\lambda,t)\phi_\lambda(r)e^{i\lambda r}\frac{d\lambda}{\cfct(-\lambda)} \\ &+ e^{-\rho r}\frac{t^{\alpha+1/2}}{\sqrt{\Delta(t)}}\int_{1/t}^RE_1(\lambda,t)\phi_{-\lambda}(r)e^{-i\lambda r}\frac{d\lambda}{\cfct(\lambda)}
\end{split}
\]
where the first two terms are satisfy the same estimates. We therefore concentrate on the first one. To this end recall that $J_\alpha(t)\thicksim t^{-1/2}\cos(t-\frac{2\alpha+1}{4}\pi)+O(t^{-3/2})$ for $t\to\infty$, so that
\[\begin{split}
\int_{1/t}^R \frac{J_\alpha(\lambda)}{\lambda^\alpha}\phi_\lambda(r)e^{i\lambda r}\frac{d\lambda}{\cfct(-\lambda)} &= \gamma_1t^{-1/2}\int_{1/t}^R\frac{e^{i\lambda(r+t)}}{\lambda^{\alpha+\frac{1}{2}}}\phi_\lambda(r)\frac{d\lambda}{\cfct(-\lambda)}\\
&\quad +\gamma_{-1}t^{-1/2}\int_{1/t}^R\frac{e^{i\lambda(r-t)}}{\lambda^{\alpha+\frac{1}{2}}}\phi_\lambda(r)\frac{d\lambda}{\cfct(-\lambda)}\\ &\quad + \int_{1/t}^R\frac{E(\lambda)}{\lambda^{\alpha+\frac{1}{2}}}e^{i\lambda r}\phi_\lambda(r)\frac{d\lambda}{\cfct(-\lambda)}
\end{split}\]
where $\gamma_{\pm 1}=\exp(\pm i\frac{2\alpha+1}{4}\pi)$. Since $E(\lambda)\thicksim O(\lambda^{-3/2})$ and $\cfct(-\lambda)^{-1}\thicksim\lambda^{\alpha+\frac{1}{2}}$, the third integral is easily bounded.
As for the first two integrals it holds that
\begin{multline*}
\int_{1/t}^R\frac{e^{i\lambda(r\pm t)}}{\lambda^{\alpha+\frac{1}{2}}}\phi_\lambda(r)\frac{d\lambda}{\cfct(\mp\lambda)} = \Bigl[\frac{e^{i\lambda(r\pm t)}}{i(r\pm t)}\frac{\phi_\lambda(r)}{\lambda^{\alpha+\frac{1}{2}}\cfct(\mp\lambda)}\Bigr]_{1/t}^R \\ - \frac{1}{i(r\pm t)} \int_{1/t}^Re^{i\lambda(r\pm t)}\frac{d}{d\lambda}\Bigl(\frac{\phi_\lambda(r)}{\lambda^{\alpha+\frac{1}{2}}\cfct(\mp\lambda)}\Bigr)\,d\lambda
\end{multline*}
The first term is dominated by $c/r$, which is what we need, whereas the second term is controlled by an additional integration by parts. This gives rise to the additional contribution
\[\Bigl[\frac{e^{i\lambda(r\pm t)}}{(r\pm t)^2}\frac{d}{d\lambda}\Bigl(\frac{\phi_\lambda(r)}{\lambda^{\alpha+\frac{1}{2}}\cfct(\mp\lambda)}\Bigr)\Bigr]_{1/t}^R + \frac{1}{(r\pm t)^2}\int_{1/t}^Re^{i\lambda(r\pm t)}\frac{d^2}{d\lambda^2}\Bigl(\frac{\phi_\lambda(r)}{\lambda^{\alpha+\frac{1}{2}}\cfct(\mp\lambda)}\Bigr)\,d\lambda,\]
where the first term is bounded by $c/r^2$ and where the integral is bounded by
\[\frac{c}{r^2}\int_{1/t}^R\frac{1}{\lambda^2}\,d\lambda\leq\frac{c}{r^2}.\]
Collecting powers in $t$ (observing that $\sqrt{\Delta(t)}\thicksim t^{\alpha+\frac{1}{2}}$ for $t<R_0$ and that we gained the factor $t^{-1/2}$ when estimating $\int_{1/t}^RJ_\alpha(\lambda t)\lambda^{-\alpha}\phi_\lambda(r)e^{i\lambda r}\cfct(-\lambda)^{-1}d\lambda$, the required kernel estimate drops out. The remaning terms in the decomposition of $K_{3,R}^{(3)}$ is treated analogously.
\end{proof}
\begin{rem}
The proof is as in \cite{Meaney-Prestini_invtrans} but it must be pointed out that the proof in \cite{Meaney-Prestini_invtrans} has a technical gap, in that the authors ignore $\phi_{-\lambda}$ and $\cfct(-\lambda)$ in the estimates. The results on asymptotic properties of $\varphi_\lambda$ and $\vert\cfct(\lambda)\vert^{-2}$ are stated under the assumption that $\lambda$ be nonnegative, so one must be more careful. Moreover, as we do not complex conjugate anywhere, and since the original proof was a bit short, we have filled out the gaps along the way. Lemma \ref{lemma.precise-c} and the results on the asymptotic behavior of $\varphi_\lambda(t)$ have been stated and proved in a way that repairs this small deficiency.
\end{rem}

\subsection{Investigation of $S_{4,*}$}
It remains to analyze $S_{4,*}$ but the required estimates follow at once from those for $S_{3,*}$ once we have interchanged $t$ and $r$. The resulting kernel estimate is
\begin{equation}\label{eqn.K4}
\vert K_R(t,r)\vert\leq c\frac{e^{-\rho t}}{t}\frac{1}{r^{\alpha+\frac{1}{2}}}\text{ for } r<\frac{R_0}{2}\text{ and } t>R_0,
\end{equation}
which implies  that
\[\begin{split}
\vert S_{4,R}f(t)\vert& \lesssim \int_0^{R_0/2}\frac{e^{-\rho t}}{t}\frac{1}{r^{\alpha+\frac{1}{2}}}\vert f(r)\vert \Delta(r)\,dr \\ \lesssim \frac{e^{-\rho t}}{t}\|f\|_{L^p}\Bigl(\int_0^{R_0/2}r^{-(\alpha+\frac{1}{2})p'+2\alpha+1}\,dr\Bigr)^{1/p'},
\end{split}\]
which is finite whenever $p'<\frac{4\alpha+4}{2\alpha+1}$.  Unlike the operator $S_{1,*}$, $L^p$-boundedness of $S_{4,*}$ does not require an additional constraint on the range of $p$. Indeed $\vert S_{4,*}f(t)\vert\lesssim \frac{e^{-\rho t}}{t}\|f\|_{L^p}$, whence
\begin{multline*}
\|S_{4,*}f\|^2_{L^2(d\mu)} = \int_{R_0}^\infty\vert S_{4,*}f(t)\vert^2\Delta(t)\,dt\\ \lesssim \int_{R_0}^\infty\frac{e^{-2\rho t}}{t^2}\|f\|_{L^p}^2\Delta(t)\,dt \lesssim \|f\|_{L^p}^2\int_{R_0}^\infty\frac{dt}{t^2}\lesssim \|f\|_{L^p}^p.\end{multline*}
In other words $\|S_{4,*}f\|_{L^2(d\mu)}\leq c\|f\|_{L^p}$ for $\frac{4\alpha+4}{2\alpha+3}<p$.

\subsection{Divergence at $p=p_0$}
We will presently prove Theorem \ref{thm.divergence-at-endpoint} regarding the existence of a particularly unpleasant function $f\in L^{p_0}(d\mu)$. The technique is an easy extension of the one used to establish \cite[Theorem~4]{Meaney-Prestini_invtrans}, which we review for the sake of completeness. It was already used in \cite{Meaney-divergent}, which in turn was an extension of the classical Cantor--Lebesgue Lemma (for trigonometric series)   to the setting of Jacobi polynomials on $[-1,1]$.

In the following, one should think of the parameter $\alpha$ (which Meaney and Kanjin use for the Hankel transform) as our Jacobi parameter $\alpha$; the point is that we may ignore the other Jacobi parameter $\beta$ when we are merely interested in the local (Euclidean) behavior of the Jacobi functions. So assume $\alpha\geq-\frac{1}{2}$, $p\in[1,\infty)$, and $0\leq a<b\leq\infty$. Let
$L_\alpha^p((a,b))$ denote the space of all measurable functions $g$ on $\R_+$ for which
\[\|g\|_{\alpha,p}=\Bigl(\int_a^b\vert g(t)\vert^pt^{2\alpha+1}\,dt\Bigr)^{1/p}<\infty.\]
\begin{lem}\label{lemma.Kanjin-CL}
Assume $\frac{4\alpha+2}{2\alpha+3}\leq p\leq 2$, and that $F\in L_\alpha^{p'}((1,\infty))$ has the property that
\[\lim_{R\to\infty}\int_R^{R+h}F(\lambda)\Bigl(\frac{J_\alpha(\lambda t)}{(\lambda t)^\alpha}\Bigr)\lambda^{2\alpha+1}\,d\lambda = 0\]
uniformly in $h\in[0,1]$. It then follows that
\[\lim_{R\to\infty}\int_R^{R+h}F(\lambda)\lambda^{\alpha+\frac{1}{2}}\,d\lambda = 0\]
uniformly in $h\in[0,1]$.
\end{lem}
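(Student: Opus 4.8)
The plan is to strip the Bessel factor off the hypothesis by means of the large-argument asymptotics of $J_\alpha$, reducing the assertion to a Cantor--Lebesgue statement for a purely oscillatory integral, and then to recover the non-oscillatory integral by the classical squaring-and-averaging device. First I would insert Watson's expansion $J_\alpha(z)=\sqrt{2/(\pi z)}\cos(z-\vartheta)+O(z^{-3/2})$, with phase $\vartheta=\frac{2\alpha+1}{4}\pi$ (cf. \cite[p.~199]{Watson}), evaluated at $z=\lambda t$. This yields
\[\frac{J_\alpha(\lambda t)}{(\lambda t)^\alpha}\lambda^{2\alpha+1}=c\,t^{-\alpha-\frac12}\lambda^{\alpha+\frac12}\cos(\lambda t-\vartheta)+O\bigl(t^{-\alpha-\frac32}\lambda^{\alpha-\frac12}\bigr),\]
so that, after dividing through by the harmless nonzero constant $c\,t^{-\alpha-1/2}$, the hypothesis becomes a statement about $\int_R^{R+h}F(\lambda)\lambda^{\alpha+\frac12}\cos(\lambda t-\vartheta)\,d\lambda$ together with an error coming from the $O(z^{-3/2})$ tail.

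The second step is to show that this error is negligible uniformly in $h$, and this is exactly where the lower bound on $p$ is used. The error contribution is dominated by $\int_R^{R+h}|F(\lambda)|\lambda^{\alpha-\frac12}\,d\lambda$, and H\"older's inequality with the conjugate exponents $p',p$ bounds it by
\[\Bigl(\int_R^{R+h}|F|^{p'}\lambda^{2\alpha+1}\,d\lambda\Bigr)^{1/p'}\Bigl(\int_R^{R+h}\lambda^{s}\,d\lambda\Bigr)^{1/p},\qquad s=p\bigl(-\alpha-\tfrac32\bigr)+2\alpha+1.\]
The first factor is the tail of the finite quantity $\|F\|_{L^{p'}_\alpha((1,\infty))}$ and hence tends to $0$ as $R\to\infty$; the second factor stays bounded for every $R\geq1$ and every $h\le1$ precisely when $s\le0$, that is, when $p\ge\frac{4\alpha+2}{2\alpha+3}$, which is the standing hypothesis. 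Consequently
\[\int_R^{R+h}F(\lambda)\lambda^{\alpha+\frac12}\cos(\lambda t-\vartheta)\,d\lambda\longrightarrow 0\quad\text{uniformly in }h\in[0,1],\]
for every $t$ in the relevant range of frequencies.

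Writing $G=F\lambda^{\alpha+\frac12}$, it remains to deduce $\int_R^{R+h}G\,d\lambda\to0$ from the smallness of its cosine transform, and this is the genuine obstacle. A single frequency $t$ cannot by itself detect the zero-frequency mass $\int_R^{R+h}G$, because the modulation $e^{-iRt}$ decorrelates distinct values of $t$ as $R\to\infty$; one really must exploit a whole band of frequencies. I would therefore argue by contradiction following the Cantor--Lebesgue method as adapted to the Hankel setting in \cite{Meaney-divergent} and \cite{Colzani-Hankel}. Assuming the conclusion fails, extract $R_n\to\infty$ and $h_n\in[0,1]$ with $\bigl|\int_{R_n}^{R_n+h_n}G\bigr|\ge\delta$, and integrate the square of the cosine integral over a band $t\in[a,b]$. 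Expanding through $\cos(\lambda t-\vartheta)\cos(\mu t-\vartheta)=\tfrac12\cos((\lambda-\mu)t)+\tfrac12\cos((\lambda+\mu)t-2\vartheta)$, the diagonal part produces a positive multiple of a local $L^2$-mass of $G$, while the off-diagonal piece and the high-frequency $(\lambda+\mu)$-term are annihilated by Riemann--Lebesgue; a Cauchy--Schwarz step then converts this into the required lower bound for $\bigl|\int G\bigr|^2$, contradicting the convergence just established.

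The points I expect to be delicate, and which constitute the analytic heart of the cited Cantor--Lebesgue theorems, are twofold: making the passage to the limit in the $t$-averaging rigorous despite the growth of the weight $\lambda^{\alpha+\frac12}$ on the unit window $[R,R+1]$, which forces one to track the local mass of $G$ carefully rather than invoke a naive dominated-convergence argument, and controlling the off-diagonal kernel $\int_a^b\cos((\lambda-\mu)t)\,dt$ uniformly so that it does not swamp the positive diagonal contribution. Once these are secured, the contradiction closes and the uniform-in-$h$ vanishing of $\int_R^{R+h}F(\lambda)\lambda^{\alpha+\frac12}\,d\lambda$ follows.
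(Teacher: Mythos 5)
Your first two steps are sound and reproduce the standard reduction: inserting Watson's asymptotics and controlling the $O((\lambda t)^{-3/2})$ tail by H\"older against the weight $\lambda^{2\alpha+1}$ does yield the exponent condition $p(-\alpha-\tfrac32)+2\alpha+1\le 0$, i.e.\ $p\ge\frac{4\alpha+2}{2\alpha+3}$, so you have correctly located where the lower bound on $p$ enters. Be aware, however, that the paper itself does not prove this lemma at all: it is quoted from Kanjin, and the text says only ``We refer to \cite{Kanjin-summation} for a proof.'' So the only part of your proposal going beyond what the paper records is your third step --- and that is exactly where the argument breaks down.

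Two concrete problems. First, your claim that the off-diagonal piece is ``annihilated by Riemann--Lebesgue'' is false: both variables run over the same window $[R,R+h]$ with $h\le 1$, so $|\lambda-\mu|\le 1$ and the kernel $\int_E\cos((\lambda-\mu)t)\,dt$ involves only bounded frequencies; it is of size comparable to $|E|$ near the diagonal and never small. (Nothing actually needs annihilating there, since the entire $(\lambda-\mu)$-term equals the nonnegative quantity $\tfrac12\int_E\bigl|\int_R^{R+h}G(\lambda)e^{i\lambda t}\,d\lambda\bigr|^2\,dt$; that you propose to kill it by oscillation indicates the squaring scheme has not been carried through.) Second, and more seriously, squaring at best shows that $m_{R,h}(t):=\int_R^{R+h}G(\lambda)e^{i\lambda t}\,d\lambda$ is small in $L^2$ on the set where the hypothesis lives --- and in the intended application (Lemma \ref{lemma.MP}) that is only a measurable set $E\subset[\varepsilon,1]$ of positive measure, not a band $[a,b]$, so even your $(\lambda+\mu)$-term comes with no Riemann--Lebesgue \emph{rate}. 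The conclusion, by contrast, concerns $m_{R,h}(0)=\int_R^{R+h}G$, and bridging from $t\in E$ to $t=0$ is the analytic core: it cannot be had from compactness or a naive quantitative step, because $\|G\|_{L^1(R,R+1)}\lesssim\epsilon_R R^{\theta}$ with $\theta=\frac{2\alpha+1}{p}-\bigl(\alpha+\tfrac12\bigr)$, which ranges over $[0,1]$ on your interval of exponents and is genuinely unbounded for $p<2$; the family of window transforms $m_{R,h}$ is therefore not normal, and the high-frequency term is only $o(1)\cdot\|G\|_{L^1(R,R+1)}^2$, not $o(1)$. You flag these very points as ``the analytic heart of the cited Cantor--Lebesgue theorems,'' but that heart \emph{is} the lemma: neither \cite{Meaney-divergent} (discrete Jacobi polynomial series) nor \cite{Colzani-Hankel} (Lorentz endpoint bounds) proves this continuous-window statement --- the correct source is \cite{Kanjin-summation} --- so deferring to them leaves the genuine content of the lemma unproved.
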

We refer to \cite{Kanjin-summation} for a proof. The Lemma will be applied to $F=\widehat{f}$, which is permissible since the Hausdorff--Young inequality implies that $\|\widehat{f}\|_{L^{p'}(d\nu)}\lesssim\|f\|_{L^p(d\mu)}$ whenever $f$ belongs to $L^p(d\mu)$. Since $\vert\cfct(\lambda)\vert^{-2}\thicksim\lambda^{2\alpha+1}$ for $\lambda\to\infty$, it thus follows that $\widehat{f}\vert_{[1,\infty)}\in L_\alpha^{p'}((1,\infty))$.

\begin{lem}\label{lemma.MP}
Assume $p\in[\frac{4\alpha+2}{2\alpha+3},2]$, and that $f\in L^p(\R_+,d\mu)$ has the property that $\lim_{R\to\infty}S_Rf(t)$ exists for every $t$ in a subset $E\subset[0,1]$ of positive measure. It follows that
\begin{equation}\label{eqn.CL-convergence}
\lim_{R\to\infty}\int_R^{R+h}\widehat{f}(\lambda)\vert\cfct(\lambda)\vert^{-1}\,d\lambda=0
\end{equation}
uniformly in $h\in[0,1]$.
\end{lem}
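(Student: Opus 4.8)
The plan is to run a Cantor--Lebesgue argument in the Jacobi setting, reducing everything to the Hankel-transform version already recorded in Lemma~\ref{lemma.Kanjin-CL}. The starting observation is that convergence of $\{S_Rf(t)\}$ is a Cauchy condition: since $S_Rf(t)=\int_0^R\widehat f(\lambda)\varphi_\lambda(t)\,d\nu(\lambda)$ with $d\nu(\lambda)=\vert\cfct(\lambda)\vert^{-2}\,d\lambda$, the existence of $\lim_{R\to\infty}S_Rf(t)$ for $t\in E$ gives, upon writing $S_{R+h}f(t)-S_Rf(t)$,
\[
\lim_{R\to\infty}\int_R^{R+h}\widehat f(\lambda)\varphi_\lambda(t)\,\vert\cfct(\lambda)\vert^{-2}\,d\lambda=0
\]
for each $t\in E$, and this convergence is automatically uniform in $h\in[0,1]$, since any $R'=R+h$ with $0\leq h\leq 1$ and $R$ large lies beyond the Cauchy threshold.

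First I would replace the Jacobi kernel $\varphi_\lambda(t)\vert\cfct(\lambda)\vert^{-2}$ by its leading Euclidean model. Because $E\subset[0,1]\subset[0,R_0]$, Lemma~\ref{lemma.asymptoticEXP} with $M=1$ expresses $\varphi_\lambda(t)$ as $c_\alpha\frac{t^{\alpha+1/2}}{\sqrt{\Delta(t)}}\bigl(\mathcal{J}_\alpha(\lambda t)+a_1(t)t^2\mathcal{J}_{\alpha+1}(\lambda t)\bigr)+E_2(\lambda t)$, while Lemma~\ref{lemma.precise-c}(i) gives $\vert\cfct(\lambda)\vert^{-2}=c_0\lambda^{2\alpha+1}+O(\lambda^{2\alpha})$. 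Expanding the product, the main term is a constant multiple of $\frac{t^{\alpha+1/2}}{\sqrt{\Delta(t)}}\frac{J_\alpha(\lambda t)}{(\lambda t)^\alpha}\lambda^{2\alpha+1}$, and I claim every remaining term has integrand bounded, for the fixed $t>0$ and $\lambda\in[R,R+h]$ (so $\lambda t\gg 1$), by $c_t\vert\widehat f(\lambda)\vert\lambda^{\alpha-1/2}$; the worst contributions come from pairing $\mathcal{J}_\alpha(\lambda t)$ with the $O(\lambda^{2\alpha})$ correction and from pairing the $\mathcal{J}_{\alpha+1}$ term with $\lambda^{2\alpha+1}$. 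It is precisely here that retaining the $m=1$ Bessel term matters: truncating at $M=0$ would leave an error of size $\lambda^{\alpha}$, which is not integrable at the endpoint $p=\frac{4\alpha+2}{2\alpha+3}$.

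The key estimate is then that $\int_R^{R+h}\vert\widehat f(\lambda)\vert\lambda^{\alpha-1/2}\,d\lambda\to 0$ uniformly in $h$, which I would prove by H\"older with respect to $\lambda^{2\alpha+1}\,d\lambda$ and exponents $p',p$. The first factor $\bigl(\int_R^{R+h}\vert\widehat f\vert^{p'}\lambda^{2\alpha+1}\,d\lambda\bigr)^{1/p'}$ is a shrinking tail of the convergent integral defining $\|\widehat f\|_{L_\alpha^{p'}((1,\infty))}$ (finite by Hausdorff--Young, as noted before the lemma), and the second factor equals $\bigl(\int_R^{R+h}\lambda^{2\alpha+1-(\alpha+3/2)p}\,d\lambda\bigr)^{1/p}$, whose exponent is $\leq 0$ exactly when $p\geq\frac{4\alpha+2}{2\alpha+3}$ and is identically $0$ at that endpoint; in all cases this factor stays bounded, so the product tends to $0$ uniformly in $h$. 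Dividing the resulting identity by the nonzero constant $c_\alpha t^{\alpha+1/2}/\sqrt{\Delta(t)}$ yields
\[
\lim_{R\to\infty}\int_R^{R+h}\widehat f(\lambda)\frac{J_\alpha(\lambda t)}{(\lambda t)^\alpha}\lambda^{2\alpha+1}\,d\lambda=0
\]
uniformly in $h$, for $t\in E$, which is exactly the hypothesis of Lemma~\ref{lemma.Kanjin-CL}. That lemma upgrades this to $\int_R^{R+h}\widehat f(\lambda)\lambda^{\alpha+1/2}\,d\lambda\to 0$, and a final application of the same H\"older estimate converts $\lambda^{\alpha+1/2}$ into $\vert\cfct(\lambda)\vert^{-1}$: since $\vert\cfct(\lambda)\vert^{-1}=\sqrt{c_0}\,\lambda^{\alpha+1/2}(1+O(\lambda^{-1}))$ the difference is $O(\lambda^{\alpha-1/2})$, giving \eqref{eqn.CL-convergence}.

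The hard part will be the error analysis of the third paragraph: organizing the expansion so that \emph{every} discarded term is dominated by $\vert\widehat f\vert\lambda^{\alpha-1/2}$, and then verifying that this single borderline power is integrable against the $L_\alpha^{p'}$-tail uniformly in $h$ all the way down to the critical endpoint $\frac{4\alpha+2}{2\alpha+3}$. Everything else is bookkeeping, but the sharpness of Lemma~\ref{lemma.precise-c} and the retention of the first Bessel correction are exactly what make the endpoint case survive.
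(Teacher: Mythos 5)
Your proposal follows essentially the same route as the paper's own proof: both reduce the statement to Lemma \ref{lemma.Kanjin-CL} by inserting the two-term Bessel expansion of Lemma \ref{lemma.asymptoticEXP} together with the asymptotics of $\vert\cfct(\lambda)\vert^{-2}$ from Lemma \ref{lemma.precise-c}, and both dispose of every correction term (the $J_{\alpha+1}$ piece, the $O(\lambda^{2\alpha})$ defect of the Plancherel density, and $E_2$) via the same H\"older/Hausdorff--Young estimate against the borderline power $\lambda^{\alpha-1/2}$. If anything your write-up is slightly more careful at the critical exponent: the paper lets the second H\"older factor supply the decay, which as written needs $p$ strictly above $\frac{4\alpha+2}{2\alpha+3}$, whereas you obtain the decay from the shrinking $L_\alpha^{p'}$-tail of $\widehat f$ (so the endpoint included in the lemma's hypothesis is genuinely covered), and you also make explicit the final passage from $\lambda^{\alpha+\frac{1}{2}}$ to $\vert\cfct(\lambda)\vert^{-1}$, which the paper leaves implicit.
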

The proof of Theorem \ref{thm.divergence-at-endpoint} will be completed once we have produced a function $f\in L^{p_0}(\R_+,d\mu)$ that violates the conclusion \eqref{eqn.CL-convergence}.
\begin{proof}
We may assume without loss of generality that $E$ is contained in an interval of the form $[\varepsilon,1]$, $\lambda>1/\varepsilon$. Using that
\[\varphi_\lambda(t)=c\frac{t^{\alpha+\frac{1}{2}}}{\sqrt{\Delta(t)}}\biggl(\frac{J_\alpha(\lambda t)}{(\lambda t)^\alpha} + t^2a_1(t)\frac{J_{\alpha+1}(\lambda t)}{(\lambda t)^{\alpha+1}}\biggr)+E_2(\lambda,t)\]
with $\vert E_2(\lambda,t)\vert\lesssim t^{2-(\alpha+\frac{1}{2})}\lambda^{-\frac{2\alpha+5}{2}}$ for $\vert\lambda t\vert>1$, it suffices (due to the fact that $\vert\cfct(\lambda)\vert^{-2}/\lambda^{2\alpha+1}\lesssim 1$ for $\lambda\to\infty$) to show that \emph{if}
\[\lim_{R\to\infty}\int_R^{R+h}\widehat{f}(\lambda)\varphi_\lambda(t)\vert\cfct(\lambda)\vert^{-2}\,d\lambda =0\]
for $\varepsilon<t<1$, then
\[\lim_{R\to\infty}\int_R^{R+h}\widehat{f}(\lambda)\frac{J_\alpha(\lambda t)}{(\lambda t)^\alpha}\lambda^{2\alpha+1}\,d\lambda=0\]
since Lemma \ref{lemma.Kanjin-CL} is then applicable. Note that the conclusion is not automatic, since the integrands are not always positive. There could be lots of oscillation going on that would prevent the requirement in Lemma \ref{lemma.Kanjin-CL} to be satisfied. We must therefore prove the statements
\begin{align}
 \lim_{R\to\infty}\int_R^{R+h}\widehat{f}(\lambda)\frac{J_{\alpha+1}(\lambda t)}{(\lambda t)^{\alpha+1}}\lambda^{2\alpha+1}\,d\lambda =0 \tag{i}\label{eqn1}\\
\lim_{R\to\infty}\int_R^{R+h}\widehat{f}(\lambda)E_2(\lambda,t)\lambda^{2\alpha+1}\,d\lambda=0\tag{ii}\label{eqn2}
\end{align}
As for \eqref{eqn1} it follows from the usual Bessel function estimate $\vert J_\mu(x)\vert\lesssim x^{-1/2}$ for large $x$ that
\begin{multline*}
\biggl|\int_R^{R+h}\widehat{f}(\lambda)\frac{J_{\alpha+1}(\lambda t)}{(\lambda t)^{\alpha+1}}\lambda^{2\alpha+1}\,d\lambda\biggr|\leq ct^{-\frac{2\alpha+3}{2}}\int_R^{R+h}\vert\widehat{f}(\lambda)\vert\lambda^{-\frac{2\alpha+3}{2}}\lambda^{2\alpha+1}\,d\lambda\\
\leq ct^{-\frac{2\alpha+3}{2}}\Bigl(\int_R^{R+h}\vert\widehat{f}(\lambda)\vert^{p'}\lambda^{2\alpha+1}\,d\lambda\Bigr)^{1/p'} \Bigl(\int_R^{R+h}\lambda^{-\frac{2\alpha+3}{2}p}\lambda^{2\alpha+1}\,d\lambda\Bigr)^{1/p}
\end{multline*}
where the first integral is bounded by $\|\widehat{f}\|_{L_\alpha^{p'}((1,\infty))}$. The second integral is roughly of size $(hR^{-\frac{2\alpha+3}{3}p+2\alpha+1})^{1/p}$, which tends to zero as $R\to\infty$, since the assumption that $p$ be larger that $\frac{4\alpha+2}{2\alpha+3}$ implies that $2\alpha+1-(\alpha+\frac{3}{2})<0$. Therefore \eqref{eqn1} holds; the proof of \eqref{eqn2} is just as easy.
\end{proof}
\begin{proof}[Proof of Theorem \ref{thm.divergence-at-endpoint}]
Let
\[
F_R(f)=\int_R^{R+1}\widehat{f}(\lambda)\vert\cfct(\lambda)\vert^{-1}\,d\lambda = \int_0^1\Bigl\{\int_R^{R+1}\varphi_\lambda(t)\vert\cfct(\lambda)\vert^{-1}\,d\lambda\Bigr\}f(t)\Delta(t)\,dt\]
for $R>0$ and $f\in L^p(d\mu)$ with $\text{supp }f\subset[0,1]$. It is seen that the  operator norm of $F_R$ is precisely
\[\|F_R\|=\Bigl(\int_0^1\Bigl|\int_R^{R+1}\varphi_\lambda(t)\vert\cfct(\lambda)\vert^{-1}\,d\lambda\Bigr|^{p'}\Delta(t)\,dt\Bigr)^{1/p'}\]
which in turn is just the norm of $t\mapsto\int_R^{R+1}\varphi_\lambda(t)\vert\cfct(\lambda)\vert^{-1}\,d\lambda$ in $L^{p'}([0,1],\Delta(t)dt)$. Keeping in mind that $\varphi_\lambda(t)=c\frac{t^{\alpha+\frac{1}{2}}}{\sqrt{\Delta(t)}}\frac{J_\alpha(\lambda t)}{(\lambda t)^\alpha}+E_1(\lambda,t)$ with $\vert E_1(\lambda,t)\vert\lesssim t^2(\lambda t)^{-\frac{2\alpha+3}{2}}$ for $\vert\lambda t\vert>1$, we infer from the proof of \cite[Lemma~2]{Kanjin-summation} (see also the proof of \cite[Lemma~1]{Kanjin}) that
\begin{multline*}
\biggl\|t\mapsto \int_R^{R+1}\frac{J_\alpha(\lambda t)}{(\lambda t)^\alpha}\lambda^{\alpha+\frac{1}{2}}\,d\lambda\biggr\|_{L^{p'}([0,1],\Delta(t)dt)} \\
\thicksim \biggl\|t\mapsto \int_R^{R+1}\frac{J_\alpha(\lambda t)}{(\lambda t)^\alpha}\lambda^{\alpha+\frac{1}{2}}\,d\lambda\biggr\|_{L^{p'}([0,1],t^{2\alpha+1}dt)}\succsim (\log R)^{1/p'}.
\end{multline*}
Moreover
\[\int_{1/R}^1\Bigl|\int_R^{R+1}E_1(\lambda,t)\lambda^{\alpha+\frac{1}{2}}\,d\lambda\Bigr|^{p'}\Delta(t)\,dt\]
is uniformly bounded in $R$, when we take $p=p_0$. Indeed,
\[\begin{split}
\int_{1/R}^1\Bigl|\int_R^{R+1}E_1(\lambda,t)\lambda^{\alpha+\frac{1}{2}}\,d\lambda\Bigr|^{p'}\Delta(t)\,dt &\leq \int_{1/r}^1\Bigl|\int_R^{R+1}c_1t^2(\lambda t)^{-\frac{2\alpha+3}{2}}\lambda^{\alpha+\frac{1}{2}}\,d\lambda\Bigr|^{p'}\Delta(t)\,dt\\
&\leq \int_{1/R}^1\Bigl|\int_R^{R+1}c_1\lambda^{-1}\,d\lambda\Bigr|^{p'}t^{p'(2-\frac{2\alpha+3}{2})}t^{2\alpha+1}\,dt\\
&=c\Bigl(\log\frac{R+1}{R}\Bigr)^{p'}\int_{1/R}^1 t^{\frac{2\alpha+3}{2\alpha+1}}\,dt \\
&= c'\Bigl(\log\frac{R+1}{R}\Bigr)^{p'}\bigl[t^{\frac{4\alpha+4}{2\alpha+1}}\bigr]^1_{1/R}\\
&=c'\Bigl(\log\frac{R+1}{R}\Bigr)^{\frac{4\alpha+4}{2\alpha+1}}R^{-\frac{4\alpha+4}{2\alpha+1}} =o(1)
\end{split}\] for $R\to\infty$. By the Banach--Steinhaus theorem there exists a function $f\in L_\alpha^{p_0}((0,1))$ so that
\[\limsup_{R\to\infty}\Bigl|\int_R^{R+1}\widehat{f}(\lambda)\vert\cfct(\lambda)\vert^{-1}\,d\lambda\Bigr|=0.\]
It thus follows from Lemma \ref{lemma.MP} that $\{S_Rf(t)\}_R$ diverges for almost every $t\in[0,1]$.
\end{proof}

\section{Proof of the Mapping Properties for Critical Exponents}
We now prove Theorem \ref{thm.endpoint-Lorentz}. Since $S_{2,*}$ and $S_{3,*}$ do not behave worse on $L^{p_0}$ than on other $L^p$-spaces, it suffices to establish the endpoint mapping properties of $S_{1,*}$ and $S_{4,*}$.  The endpoint mapping property of $S_{4,*}$ is stated below as Lemma \ref{lemma.S4-endpoint}, so we shall presently concentrate on $S_{1,*}$.

Recall from Subsection \ref{subsec.S1} that we decomposed the integral kernel $K_{R,1}$ for the localized piece $S_{1,R}$ of the disc multiplier into a large collection of pieces. The contributions $K_{1,R}^2,\ldots, K_{1,R}^5$ are easily handled, so we begin with those:

As for $K_{R,1}^1$, we introduced a further decomposition $K_{R,1}^1(t,r)=M_{1,R}(t,r)+E_1(t,r)$, where $M_{1,R}$ was decomposed even further, cf. \eqref{eqn.decompose-M.R1}, into functions of the form
\[M_{1,R}^d(t,r)= \frac{t^{\alpha+\frac{1}{2}}}{\sqrt{\Delta(t)}}\frac{r^{\alpha+\frac{1}{2}}}{\sqrt{\Delta(r)}} \frac{1}{r^\alpha}\frac{1}{t^\alpha}\int_1^RJ_\alpha(\lambda r)J_\alpha(\lambda t)\lambda^d\,d\lambda,\quad d=2-M,\ldots,1\]
The operator $S_{1,R}^{M_{1,R}^1}$ associated with $M_{1,R}^1$ was already seen to be controlled by the spherical summation operator for the Hankel transform, so the associated maximal operator has the stated mapping property according to \cite{Colzani-Hankel}. The case $d=0$, cf. \eqref{eqn.MR0}, entails an analysis of three pieces, $M_{1,R}^{0,(1)}$, $M_{1,R}^{0,(2)}$, and $M_{1,R}^{0,(3)}$, corresponding to a suitable smooth partition of the interval $[1,R]$.

The piece $M_{1,R}^{0,(1)}(t,r)$ was seen to satisfy the estimate $\vert M_{1,R}^{0,(1)}(t,r)\vert\lesssim t^{-(\alpha+\frac{1}{2})}r^{-(\alpha+\frac{1}{2})}$, whence
\[\begin{split}
\vert S_{1,R}^{M_{1,R}^{0,(1)}}f(t)\vert &\lesssim \int_0^{R_0}\vert M_{1,R}^{0,(1)}(t,r)\vert \vert f(r)\vert\Delta(r)\,dr
\lesssim \frac{1}{t^{\alpha+\frac{1}{2}}}\int_0^{R_0}\frac{\vert f(r)\vert}{r^{\alpha+\frac{1}{2}}}\Delta(r)\,dr\\
&\lesssim\frac{1}{t^{\alpha+\frac{1}{2}}}\|f\|_{L^{p_0,1}([0,R_0],d\mu)} \cdot\|r\mapsto r^{-(\alpha+\frac{1}{2})}\|_{L^{p_1,\infty}([0,R_0],d\mu)}\\
&\lesssim \frac{1}{t^{\alpha+\frac{1}{2}}}\|f\|_{L^{p_0,1}(\R_+,d\mu)}.
\end{split}\]

The relevant level function for $S_{1,*}^{M_{1,R}^{0,(1)}}$ therefore satisfies the estimate
\[\begin{split}
d(\lambda)&=\mu\bigl(\bigl\{t\in[0,R_0]\,:\, \bigl\vert S_{1,*}^{M_{1,R}^{0,(1)}}f(t)\bigr\vert>\lambda\bigr\}\bigr)\leq \frac{1}{\lambda^{p_0}}\int_0^{R_0}\bigl\vert S_{1,*}^{M_{1,R}^{0,(1)}}f(t)\bigr\vert^{p_0}\Delta(t)\,dt\\
&\lesssim\frac{\|f\|_{L^{p_0,1}}^{p_0}}{\lambda^{p_0}} \int_0^{R_0}t^{2\alpha+1-(\alpha+\frac{1}{2})p_0}\,dt
\end{split}\]\texttt{}
where the integral is finite since $2\alpha+1-(\alpha+\frac{1}{2})p_0=2\alpha+1-(\alpha+\frac{1}{2})\frac{4\alpha+4}{2\alpha+3}=\frac{2\alpha+1}{2\alpha+3}>0>-1$ (since $\alpha>-\frac{1}{2}$ by standing assumption)
implying that $\|S_{1,*}^{M_{1,R}^{0,(1)}}f\|_{L^{p_0,\infty}(\R_+,d\mu)}\lesssim \|f\|_{L^{p_0,1}(\R_+,d\mu)}$ as claimed.

The mapping properties of the maximal operator associated with the piece $M_{1,R}^{0,(2)}$ are the same as for $M_{1,R}^{0,(1)}$ since $\vert M_{1,R}^{0,(2)}(t,r)\vert\lesssim t^{-(\alpha+\frac{1}{2})}r^{-(\alpha+\frac{1}{2})}$.

The most difficult piece,  $M_{1,R}^{0,(3)}$, gave rise to an operator that was controlled by the Carleson operator applied to the function $f\sqrt{\Delta}$, the upshot being the estimate \eqref{eqn.Carleson-estimate}. It is seen by close inspection of the argument on top of page 75 in \cite{Colzani-Hankel} the Carleson maximal operator is even bounded from $L^{p_0,1}$ into $L^{p_0,\infty}$ (the underlying measure space now being $\R_+$ with weighted Lebesgue measure $x^{2\alpha+1}dx$), so it follows that the maximal operator $S_{1,R}^{M_{1,R}^{0,(3)}}$ is bounded from $L^{p_0,1}(\R_+,d\mu)$ into $L^{p_0,\infty}(\R_+,d\mu)$. Recall here that $S_{1,R}^{M_{1,R}^{0,(3)}}f(t)$ is only considered for $0\leq t\leq R_0$, hence the stated result on the Carleson operator is applicable. Hence $S_{1,R}^{M_{1,R}^0}$ enjoys the stated endpoint mapping property.

The cases $d=2-M,\ldots,-2,-1$ now follow at once; above we have merely used that the relevant kernels were dominated by $ct^{-(\alpha+\frac{1}{2})}r^{-(\alpha+\frac{1}{2})}$. Since all the remaining operators satisfy the same estimates, we are effectively done; the maximal operator $S_{1,*}$ is bounded from $L^{p_0,1}(\R_+,d\mu)$ into $L^{p_0,\infty}(\R_+,d\mu)$.
\medskip

As for $S_{4,*}$ we will state the precise result as a lemma:

\begin{lem}\label{lemma.S4-endpoint}
The maximal operator $S_{4,*}$ is bounded from $L^{p_0,1}(\R_+,d\mu)$ into $L^2(\R,d\mu)$.
\end{lem}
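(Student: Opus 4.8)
The plan is to start from the pointwise kernel bound already obtained in the analysis of $S_{4,*}$, namely
\[\vert K_R(t,r)\vert\lesssim \frac{e^{-\rho t}}{t}\frac{1}{r^{\alpha+\frac{1}{2}}}\]
valid uniformly in $R>1$ for $r<\frac{R_0}{2}$ and $t>R_0$, which is \eqref{eqn.K4}. Since the right-hand side is independent of $R$, passing to the supremum over $R$ gives at once
\[\vert S_{4,*}f(t)\vert\lesssim \frac{e^{-\rho t}}{t}\int_0^{R_0/2}\frac{\vert f(r)\vert}{r^{\alpha+\frac{1}{2}}}\,d\mu(r)=\frac{e^{-\rho t}}{t}\,I(f),\]
where $I(f)$ denotes the displayed integral. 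The entire problem is thereby reduced to estimating the single scalar $I(f)$ by $\|f\|_{L^{p_0,1}(d\mu)}$; once that is done, the $L^2(d\mu)$-bound follows by integrating the pointwise estimate.

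The crucial step, and the only place where the passage to Lorentz spaces is genuinely required, is the estimate $I(f)\lesssim \|f\|_{L^{p_0,1}(d\mu)}$. I would regard $I(f)$ as the $d\mu$-pairing of $\vert f\vert$ against the weight $g(r)=r^{-(\alpha+\frac{1}{2})}1_{[0,R_0/2]}(r)$ and apply H\"older's inequality for Lorentz spaces,
\[\int \vert f\vert\,g\,d\mu\lesssim \|f\|_{L^{p_0,1}(d\mu)}\,\|g\|_{L^{p_0',\infty}(d\mu)}.\]
A direct computation of the conjugate index gives $p_0'=p_1$, so it suffices to check $g\in L^{p_1,\infty}(d\mu)$. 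For large $s$ the distribution function of $g$ is
\[\mu(\{g>s\})=\int_0^{s^{-1/(\alpha+1/2)}}\Delta(r)\,dr\thicksim s^{-p_1},\]
because $\Delta(r)\thicksim r^{2\alpha+1}$ for small $r$ and $(2\alpha+2)/(\alpha+\tfrac{1}{2})=p_1$; hence $\sup_{s>0}s\,\mu(\{g>s\})^{1/p_1}<\infty$, exactly as was already noted for the piece $M_{1,R}^{0,(1)}$. This is the heart of the matter: the ordinary H\"older argument used for the non-critical exponents breaks down precisely at $p=p_0$, since the integral $\int_0^{R_0/2}r^{2\alpha+1-(\alpha+\frac{1}{2})p'}\,dr$ diverges exactly when $p=p_0$, whereas the weak-type membership of the singular weight combined with the \emph{strong} $L^{p_0,1}$-norm of $f$ is just enough to compensate at the endpoint.

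Finally I would integrate. Using $\vert S_{4,*}f(t)\vert\lesssim \frac{e^{-\rho t}}{t}\|f\|_{L^{p_0,1}(d\mu)}$ and $\Delta(t)\thicksim e^{2\rho t}$ for $t\gg 1$, I compute
\[\|S_{4,*}f\|_{L^2(d\mu)}^2=\int_{R_0}^\infty\vert S_{4,*}f(t)\vert^2\Delta(t)\,dt\lesssim \|f\|_{L^{p_0,1}(d\mu)}^2\int_{R_0}^\infty\frac{e^{-2\rho t}}{t^2}\Delta(t)\,dt\lesssim \|f\|_{L^{p_0,1}(d\mu)}^2\int_{R_0}^\infty\frac{dt}{t^2},\]
and the last integral converges, yielding $\|S_{4,*}f\|_{L^2(d\mu)}\lesssim \|f\|_{L^{p_0,1}(d\mu)}$. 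I expect no serious obstacle beyond the two structural points already isolated: the exponential growth of $\Delta$ being cancelled exactly by the decay factor $e^{-2\rho t}$ inherited from $\varphi_\lambda$ in the region $t>R_0$, and the Lorentz-duality bookkeeping that replaces the failing $L^{p_0}$ estimate at the critical exponent.
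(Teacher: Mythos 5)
Your proposal is correct and follows essentially the same route as the paper's proof of Lemma \ref{lemma.S4-endpoint}: the uniform kernel bound \eqref{eqn.K4}, the Lorentz--H\"older pairing of $f$ against the weight $r^{-(\alpha+\frac{1}{2})}1_{[0,R_0/2]}$, whose membership in $L^{p_0',\infty}(d\mu)=L^{p_1,\infty}(d\mu)$ is verified by the same distribution-function computation, and the final integration in which $\Delta(t)\thicksim e^{2\rho t}$ cancels $e^{-2\rho t}$, leaving $\int_{R_0}^\infty t^{-2}\,dt<\infty$. The only cosmetic differences are that you bound $\|S_{4,*}f\|_{L^2(d\mu)}$ directly rather than passing through the paper's Chebyshev estimate on the level function $d_{S_{4,*}f}$, and you correctly write $\Delta(t)\thicksim e^{2\rho t}$ where the paper's text has the slip $e^{\rho t}$.
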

\begin{proof}
Recall that the level function of a function $f\in L^{p_0}(\R_+,d\mu)$ is defined by $d_f(\lambda)=\mu(\{t\,:\, \vert f(t)\vert>\lambda\})$, hence
\[d_{S_{4,*}f}(\lambda)\leq\frac{1}{\lambda^2}\int_{R_0}^\infty\vert S_{4,*}f(t)\vert^2\Delta(t)\,dt\lesssim \frac{1}{\lambda^2}\int_{R_0}^\infty\Bigl(\frac{e^{-\rho t}}{t}\Bigr)^2\biggl|\int_0^{R_0/2}\frac{\vert f(r)\vert}{r^{\alpha+\frac{1}{2}}}\Delta(r)\,dr\biggr|^2\Delta(t)\,dt.\]
Observe that $\Delta(t)$ grows as $e^{\rho t}$ for $t\to\infty$, so that the $t$-integrand is dominated by $\frac{1}{t^2}$ on $[R_0,\infty)$. As for the inner integral, we intend to use the Lorentz space version of the H\"older inequality, that is (with $p_0=\frac{4\alpha+4}{2\alpha+3}$)
\[\biggl|\int_0^{R_0/2} f(r)\cdot r^{-(\alpha+\frac{1}{2})}\,d\mu(r)\biggr|\leq \|f\|_{L^{p_0,1}([0,R_0/2],d\mu)}\|r^{-(\alpha+\frac{1}{2})}\|_{L^{p_0',\infty}([0,R_0/2],d\mu)},\]
to which end it suffices to show that $g:[0,R_0/2]\to\R$, $r\mapsto r^{-(\alpha+\frac{1}{2})}$ belongs to $L^{p_0',\infty}([0,R_0/2],d\mu)$. This is easy: It follows from the estimate
\[\begin{split}
d_g(\gamma)&=\mu(\{r\in[0,R_0/2]\,:\, r^{-(\alpha+\frac{1}{2})}>\gamma\}) \leq \mu(\{r\geq 0\,:\,r^{\alpha+\frac{1}{2}}<\tfrac{1}{\gamma}\})\\
&=\mu\Bigl( r\in\R_+\,:\, r<\gamma^{-\frac{1}{\alpha+\frac{1}{2}}}\Bigr)\leq \Bigl(\gamma^{-\frac{1}{\alpha+\frac{1}{2}}}\Bigr)^{2\alpha+2}=\gamma^{-\frac{2\alpha+2}{\alpha+\frac{1}{2}}},
\end{split}\]
that $d_g(\gamma)^{1/p_0'}\leq \bigl(\gamma^{-\frac{2\alpha+2}{\alpha+\frac{1}{2}}}\bigr)^{\frac{2\alpha+1}{4\alpha+4}}=\gamma^{-1}$, and therefore
\[\|g\|_{L^{p_0',\infty}([0,R_0/2],d\mu)}=\sup_{\gamma>0}\gamma d_g(\gamma)^{1/p_0'}\leq 1.\]
\end{proof}
This completes the proof of Theorem \ref{thm.endpoint-Lorentz}.

\begin{rem}
Lemma \ref{lemma.S4-endpoint} should be seen as a ``non-Euclidean'' analogue of the result from \cite{RomeraSoria} and the statement is new even for rank one symmetric spaces.
\end{rem}

\end{document}